\def\R{\mathbb{R}}
\def\N{\mathbb{N}}
\def\Z{\mathbb{Z}}
\def\f{\widehat{f}}
\def\gs {{g^\#}}
\renewcommand{\d}{\text{\rm d}}
\def\x {{\widetilde{x}}}
\def\y {{\widetilde{y}}}
\def\sgn {{\rm sgn}}
\newtheorem{theorem}{Theorem}
\newtheorem*{theoremA}{Theorem A}
\newtheorem*{theoremB}{Theorem B}
\newtheorem{corollary}[theorem]{Corollary}
\newtheorem*{definition*}{Definition}
\newtheorem{proposition}[theorem]{Proposition}
\newtheorem{lemma}[theorem]{Lemma}
\DeclareFontFamily{U}{tipa}{}
\DeclareFontShape{U}{tipa}{m}{n}{<->tipa10}{}
\newcommand{\arc@char}{{\usefont{U}{tipa}{m}{n}\symbol{62}}}%
\numberwithin{equation}{section}
\newcommand{\intav}[1]{\mathchoice {\mathop{\vrule width 6pt height 3 pt depth  -2.5pt
\kern -8pt \intop}\nolimits_{\kern -6pt#1}} {\mathop{\vrule width
5pt height 3  pt depth -2.6pt \kern -6pt \intop}\nolimits_{#1}}
{\mathop{\vrule width 5pt height 3 pt depth -2.6pt \kern -6pt
\intop}\nolimits_{#1}} {\mathop{\vrule width 5pt height 3 pt depth
-2.6pt \kern -6pt \intop}\nolimits_{#1}}}
\newcommand{\intavl}[1]{\mathchoice {\mathop{\vrule width 6pt height 3 pt depth  -2.5pt
\kern -8pt \intop}\limits_{\kern -6pt#1}} {\mathop{\vrule width 5pt
height 3  pt depth -2.6pt \kern -6pt \intop}\nolimits_{#1}}
{\mathop{\vrule width 5pt height 3 pt depth -2.6pt \kern -6pt
\intop}\nolimits_{#1}} {\mathop{\vrule width 5pt height 3 pt depth
-2.6pt \kern -6pt \intop}\nolimits_{#1}}}
\title[Generalized sign Fourier uncertainty]{Generalized sign Fourier uncertainty}
\author[Carneiro]{Emanuel Carneiro}
\author[Quesada-Herrera]{Emily Quesada-Herrera}
\address{
ICTP - The Abdus Salam International Centre for Theoretical Physics\\
Strada Costiera, 11, I - 34151, Trieste, Italy.}
\email{carneiro@ictp.it}
\address{Graz University of Technology, Institute of Analysis and Number Theory, Steyrergasse 30/II, 8010 Graz, Austria}
\email{quesada@math.tugraz.at}
\date{\today}                                           
\begin{document}

\subjclass[2010]{42B10}
\keywords{Fourier transform, sign uncertainty, harmonic polynomials, Bochner's identity, sharp constants.}
\begin{abstract} 
We consider a generalized version of the sign uncertainty principle for the Fourier transform, first proposed by Bourgain, Clozel and Kahane \cite{BCK} and revisited by Cohn and Gonçalves \cite{CG}. In our setup, the signs of a function and its Fourier transform resonate with a generic given function $P$ outside of a ball. One essentially wants to know if and how soon this resonance can happen, when facing a suitable competing weighted integral condition. The original version of the problem corresponds to the case $P = {\bf 1}$. Surprisingly, even in such a rough setup, we are able to identify sharp constants in some cases. 
\end{abstract}

\maketitle 

\section{Introduction}

\subsection{Background} Define the Fourier transform of a function $f\in L^1(\R^d)$ by 
\begin{equation*}
    \mathcal{F}_d[f](\xi)=\widehat{f}(\xi)=\int_{\R^d}e^{-2\pi i x \cdot \xi}\, f(x)\,\d x.
\end{equation*}
This transform is certainly one of the most fundamental objects in mathematics and applied mathematics, as it is used to model a variety of oscillatory phenomena. The expression {\it Fourier uncertainty} appears recurrently in the literature, describing many qualitative and quantitative variants of the same underlying principle: that one cannot have an unrestricted control of a function and its Fourier transform simultaneously. See \cite{{BDsurvey}, FSsurvey} for surveys on uncertainty principles.

\smallskip

The uncertainty paradigm is directly related to different sorts of Fourier optimization problems. Generically speaking, these are  problems in which one imposes suitable conditions on a function and its Fourier transform, and seeks to optimize a certain quantity of interest. There are surprising applications of such problems, for instance, in the theory of the Riemann zeta-function \cite{CCM, CChi, CS}, in bounding prime gaps \cite{CMS} and in the theory of sphere packings \cite{CE, CKMRV, Vi}.

\smallskip

A classical version of Heisenberg's uncertainty principle establishes that a function and its Fourier transform cannot simultaneously have their mass arbitrarily concentrated near the origin. This can be mathematically formulated as (see, for instance, \cite[Corollary 2.8]{FSsurvey})
\begin{equation}\label{20200324_00:22}
    ||f||_2^2\le \frac{4\pi}{d} \big\| |x|f \big\|_2\cdot \big\| |\xi|\f \big\|_2\,,
\end{equation}
for any $f\in L^2(\R^d)$. One may ask what happens if, instead of the total mass, one considers the {\it concentration of negative mass} of a function and its Fourier transform near the origin. In \cite{BCK}, Bourgain, Clozel and Kahane introduced a novel uncertainty principle that addresses this question, in connection to the study of real zeros of zeta functions over number fields and bounds for the associated discriminants. In their setup, the trade-off is between the sign of a function at infinity (or more precisely, the last sign change of the function), and a competing local condition for the transform at the origin. This uncertainty principle was later quantitatively refined by Gon\c{c}alves, Oliveira e Silva and Steinerberger in \cite{GOS}, who also studied its extremizers. More recently, Cohn and Gon\c{c}alves \cite{CG} went further in the topic, building on the fact that the original uncertainty principle of Bourgain, Clozel and Kahane \cite{BCK} is suitably associated to eigenfunctions of the Fourier transform with eigenvalue $+1$, by posing an analogous principle associated to the eigenvalue $-1$.

\smallskip

The sign uncertainty principles of \cite{BCK, CG, GOS} can be formulated as follows. We say that a measurable function $f:\R^d \to \R$ is {\it eventually non-negative}\footnote{It will be convenient here not to consider only continuous functions in the definition of {\it eventual non-negativity}, as the previous works in the literature do. Note, however, that we require that $f$ has a non-negative sign {\it for all} $|x| > r(f)$, and not only almost everywhere with respect to the Lebesgue measure. Similarly, we may define the concepts {\it ``eventually non-positive"} and {\it ``eventually zero"}.} if $f(x) \geq 0$ for all sufficiently large $|x|$, and we define
\begin{equation*}
r(f) :=\inf\{ r>0 \ : \ f(x) \geq 0 \ {\rm for \ all} \ |x|\geq r\}.
\end{equation*}
Let $s\in \{+1,-1\}$ denote a sign, and consider the following family of functions:
\begin{equation}\label{20200410_17:03}
\mathcal{A}_{s}(d) = \left\{
                \begin{array}{ll}
                  f\in L^1(\R^d)\setminus\{{\bf 0}\}\text{ continuous, even, real-valued and such that}\  \f\in L^1(\R^d);\\
                  sf(0)\le 0,\  \f(0)\le 0;\\
                  f\ {\rm and}\ s\f \text{ are eventually non-negative}.
                \end{array}
              \right\}.
\end{equation}
We then define 
\begin{equation}\label{20200317_13:30}
\mathbb{A}_s(d) := \inf_{f\in\mathcal{A}_s(d)}\sqrt{r(f) \, .\, r\big(s\f\,\big)}\,,
\end{equation}
which turns out to be a natural object of interest since $r(f)\, . \, r\big(s\f\,\big)$ is invariant under rescalings of the function $f$. The following assertion holds.

\begin{theoremA}[Bourgain, Clozel and Kahane \cite{BCK} ($s=+1$) ; Cohn and Gon\c{c}alves \cite{CG} ($s=-1$)] 
Let $s\in \{+1,-1\}$. Then there exist strictly positive universal constants $c$ and $C$ such that 
\begin{equation}\label{20200326_09:44}
c \sqrt{d} \leq \mathbb{A}_s(d) \leq C \sqrt{d}.
\end{equation}
\end{theoremA}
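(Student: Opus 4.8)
The plan is to prove the lower and upper bounds in \eqref{20200326_09:44} separately, treating both signs $s\in\{+1,-1\}$ uniformly. For the lower bound, the only ingredient is a short $L^1$--$L^\infty$ estimate. I would first record the elementary fact that if $h\in L^1(\R^d)$ satisfies $\int_{\R^d}h\le 0$ and $h\ge 0$ outside the ball $B_\rho$, then splitting the integral at $|x|=\rho$ gives $\|h\|_1=\int_{B_\rho}|h|+\int_{B_\rho^c}h\le 2\int_{B_\rho}|h|\le 2\,\|h\|_\infty\,\omega_d\,\rho^d$, where $\omega_d$ denotes the volume of the unit ball in $\R^d$. Given $f\in\mathcal{A}_s(d)$, set $r_1=r(f)$ and $r_2=r(s\f\,)$ and apply this twice: to $h=f$, using $\int f=\f(0)\le 0$ and the eventual non-negativity of $f$, which yields $\|f\|_1\le 2\|f\|_\infty\omega_d r_1^d$; and to $h=s\f$, using $\int s\f=sf(0)\le 0$ (this is where the hypothesis $sf(0)\le 0$ is used) and the eventual non-negativity of $s\f$, which yields $\|\f\|_1\le 2\|\f\|_\infty\omega_d r_2^d$. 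Chaining these two inequalities with the elementary bounds $\|f\|_\infty\le\|\f\|_1$ (Fourier inversion, valid since $f$ is continuous and $\f\in L^1$) and $\|\f\|_\infty\le\|f\|_1$, and then cancelling the finite, nonzero factor $\|f\|_\infty$, produces $1\le 4\omega_d^2(r_1r_2)^d$, i.e.\ $\sqrt{r_1r_2}\ge\tfrac12\,\omega_d^{-1/d}$. Since $\omega_d=\pi^{d/2}/\Gamma(\tfrac d2+1)$, the Stirling estimate $\Gamma(x+1)\ge\sqrt{2\pi x}\,(x/e)^x$ gives $\omega_d^{-1/d}\ge\sqrt{d/(2\pi e)}$, and hence $\mathbb{A}_s(d)\ge c\sqrt d$ with the explicit universal constant $c=(2\sqrt{2\pi e})^{-1}$.

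For the upper bound it is enough to produce, for each $d$ and each $s$, a single $f\in\mathcal{A}_s(d)$ with $r(f)\le C\sqrt d$, and I would use radial Fourier eigenfunctions. The one classical input I would invoke (a special case of Bochner's identity, or a direct computation starting from the transforms of $|x|^{2k}e^{-\pi|x|^2}$) is that $L_n^{(d/2-1)}(2\pi|x|^2)\,e^{-\pi|x|^2}$ is an eigenfunction of $\F_d$ with eigenvalue $(-1)^n$. Any real linear combination of these with $n$ running over a fixed parity is then an eigenfunction with eigenvalue $s$, and for such an $f$ one has $\f=sf$, so the membership conditions in \eqref{20200410_17:03} collapse to ``$f$ real, even, nonzero, $sf(0)\le 0$, and $f$ eventually non-negative'', while $r(f)=r(s\f\,)$, so that the quantity to bound is simply $r(f)$. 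For $s=+1$ I would take the combination of the $n=0$ and $n=2$ eigenfunctions vanishing at the origin, which up to a positive constant is $f(x)=|x|^2\big(\pi|x|^2-\tfrac{d+2}{2}\big)e^{-\pi|x|^2}$; then $\f=f$, $f(0)=0$, and $f(x)\ge 0$ precisely when $|x|^2\ge\tfrac{d+2}{2\pi}$, so $r(f)=\sqrt{(d+2)/(2\pi)}$. For $s=-1$ I would take the combination of the $n=1$ and $n=3$ eigenfunctions vanishing at $0$, which up to a positive constant has the form $f(x)=|x|^2\,q(|x|^2)\,e^{-\pi|x|^2}$ with $q(t)=2\pi^2t^2-3\pi(\tfrac d2+2)t+(\tfrac d2+1)(\tfrac d2+2)$; then $\f=-f$, $f(0)=0$, $q$ has two positive roots, and $r(f)$ is the square root of the larger one, namely $r(f)^2=\tfrac14\pi^{-1}\big(3(\tfrac d2+2)+\sqrt{(\tfrac d2+2)(\tfrac d2+10)}\,\big)$. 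In both cases $r(f)^2=O(d)$, so $\mathbb{A}_s(d)\le C\sqrt d$ with a universal $C$ (the larger of the two constants produced).

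I do not expect a genuinely hard step. The main chore is the upper-bound bookkeeping: choosing the coefficients in the two-term Laguerre combinations so that the resulting polynomial-times-Gaussian (i) vanishes at the origin (the extremal case of the conditions $sf(0)\le 0$ and $\f(0)\le 0$), (ii) has positive leading coefficient, so that $f$ is eventually non-negative, and (iii) has a last sign change that one can compute explicitly and bound by $O(\sqrt d)$, which for $s=-1$ means factoring a cubic in $|x|^2$ as $|x|^2$ times a quadratic and locating the larger root of that quadratic. On the lower-bound side the only point to verify is the standing requirement $0<\|f\|_\infty<\infty$, which is immediate since $f$ is nonzero, continuous, and coincides with the absolutely convergent inverse transform of $\f\in L^1$.
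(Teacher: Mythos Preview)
Your proposal is correct. For the lower bound, your argument is essentially the same mechanism the paper uses (see the proof of Theorem~\ref{Thm_general_sign_uncertainty} specialized to $P=\mathbf{1}$, $q=\infty$): split $\|h\|_1$ at the ball of radius $r(h)$ and bound by $2\|h\|_\infty\,|B_r|$, then close with Hausdorff--Young. The only difference is that the paper first passes to the eigenfunction subclass $\mathcal{A}^{**}_s(d)$ and applies the estimate once, whereas you avoid that reduction by applying it separately to $f$ and to $s\f$ and multiplying the two inequalities. Both routes give $\sqrt{r_1r_2}\ge (2\omega_d)^{-1/d}$; your stated $\tfrac12\,\omega_d^{-1/d}$ is a (valid, slightly weaker) consequence of this, since $2^{-1/d}\ge \tfrac12$.

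For the upper bound your route is genuinely different. The paper (following \cite{BCK}) builds test functions from combinations of dilated Gaussians, e.g.\ for $s=+1$ the function $f_0(x)=e^{-\pi|x|^2/a}+a^{d/2}e^{-a\pi|x|^2}-A\,e^{-\pi|x|^2}$ with $a>1$ and $A$ chosen so that $\int f_0=0$, and then optimizes in $a$ to obtain $r(f_0)\le \sqrt{d/(2\pi)}+O(1)$ (\S\ref{non-empty classes}--2.2). You instead take two-term combinations of the Laguerre eigenfunctions $L_n^{(d/2-1)}(2\pi|x|^2)e^{-\pi|x|^2}$ of the correct parity, forced to vanish at the origin. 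Your approach buys explicitness: $r(f)$ comes out as the root of a low-degree polynomial, with no asymptotic optimization needed. The paper's Gaussian approach buys generality: it extends verbatim to the weighted setting $P=H\cdot Q$ of Theorem~\ref{Thm_Harmonic_part}, where Laguerre-type eigenfunctions adapted to $P$ are not readily at hand.
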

\noindent In particular note that $\mathbb{A}_s(d) >0$. Quantitatively speaking, from \cite{BCK, CG, GOS}, estimate \eqref{20200326_09:44} holds with $c = (2 \pi e)^{-1/2}$ for $s =\pm1$; $C = (2 \pi)^{-1/2} + o_d(1)$ for $s = +1$; and $C = 0.3194\ldots + o_d(1)$ for $s=-1$. An important step in the proof of Theorem A is the fact that one can reduce the search for the infimum in \eqref{20200317_13:30} to a restricted class $\mathcal{A}^{**}_s(d) \subset \mathcal{A}_{s}(d)$ given by
\begin{equation*}
\mathcal{A}^{**}_s(d) = \left\{
                \begin{array}{ll}
                  f\in L^1(\R^d)\setminus\{{\bf 0}\}\text{ continuous, radial and real-valued: }\f=sf;\\
                  f(0)= 0;\\
                  f \text{ is eventually non-negative}.
                \end{array}
              \right\}.
\end{equation*}
This is how the eigenfunctions of the Fourier transform appear in connection to these problems. The existence of extremizers for $\mathbb{A}_s(d)$  (i.e. functions that realize the infimum in \eqref{20200317_13:30}) in this restricted class was established in \cite{GOS} for $s=+1$ and in \cite{CG} for $s=-1$.

\smallskip

The exact values of $\mathbb{A}_s(d)$ are only known in four particular cases, discovered in some of the most influential works at the interface between analysis and number theory over the last years. Firstly, the celebrated works on the sphere packing problem via linear programming bounds \cite{CE, CKMRV, Vi} yield the sharp versions of the $(-1)$-uncertainty principle in dimensions $d=1,8$ and $24$ as corollaries (see the extended remark at the end of this subsection for the precise connection). In these cases, the optimal lower bound can be established via the classical Poisson summation formula (for the $E_8$-lattice in dimension $d=8$, and for the Leech lattice in dimension $d=24$). The formula then hints on the appropriate interpolating conditions  of the extremal functions. In dimension $d=1$, the function $f(x) = \sin^2(\pi x) / (x^2-1)$ is a bandlimited extremizer; see also the earlier work of Logan \cite{Lo}. In each of the dimensions $d=8$ and $24$, a radial Schwartz extremal eigenfunction (with prescribed values for the function and its radial derivative at the radii $\{\sqrt{2n}\, ;\,  n \in \N\})$ is constructed via the impressive machinery introduced by Viazovska \cite{Vi} on Laplace transforms of modular forms; see also the recent work \cite{CKMRV2}. Secondly, the recent work of Cohn and Gon\c{c}alves \cite{CG} establishes the sharp version of the $(+1)$-uncertainty principle of Bourgain, Clozel and Kahane in the special dimension $d=12$, where the optimal lower bound now comes from a Poisson summation formula for radial Schwartz functions on $\R^{12}$ derived from the Eisenstein series $E_6$, and an explicit radial Schwartz extremal eigenfunction is constructed by further exploring the ideas of Viazovska \cite{Vi}. We now compile such results.

\begin{theoremB} Let $s\in \{+1,-1\}$ and let $\mathbb{A}_s(d)$ be defined by \eqref{20200317_13:30}. Then
\begin{enumerate}
\item[(i)] {\rm (Corollaries of Cohn and Elkies \cite{CE} ($d=1$), Viazovska \cite{Vi} ($d=8$) and Cohn, Kumar, Miller, Radchenko and Viazovska \cite{CKMRV} ($d=24$)) }.
\begin{align}\label{20200327_14:00}
\mathbb{A}_{-1}(1) = 1 \ ; \  \mathbb{A}_{-1}(8) = \sqrt{2}  \ ; \ \mathbb{A}_{-1}(24) = 2.
\end{align}
\item[(ii)] {\rm (Cohn and Gon\c{c}alves \cite{CG})}.
\begin{align}\label{20200327_14:01}
\mathbb{A}_{+1}(12) = \sqrt{2}.
\end{align}
\end{enumerate}
\end{theoremB}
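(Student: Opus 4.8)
I would treat the five pairs $(s,d)$ separately, in each case producing an equality by matching a construction to an obstruction. The first step, common to all of them, is the reduction recalled above: for $f\in\mathcal{A}^{**}_s(d)$ one has $\f=sf$ and $f(0)=0$, so $r(f)=r(s\f)$ and $\sqrt{r(f)\,r(s\f)}=r(f)$; hence $\mathbb{A}_s(d)=\inf_{f\in\mathcal{A}^{**}_s(d)}r(f)$, and it suffices to pin down the smallest possible radius $r(f)$ of an eventually non-negative, radial, real eigenfunction of $\F_d$ of eigenvalue $s$ vanishing at the origin.

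For the \emph{lower bounds} the plan is to pair a candidate $f$ against a summation formula. In dimensions $d=1,8,24$ with $s=-1$ this is classical Poisson summation over the unimodular self-dual lattices $\Z\subset\R$, $E_8\subset\R^8$, and the Leech lattice $\Lambda_{24}\subset\R^{24}$, whose minimal vector lengths are $1$, $\sqrt2$, $2$; using $\f=-f$, Poisson summation over each dilate gives $\sum_{x\in\Lambda}f(tx)=-t^{-d}\sum_{x\in\Lambda}f(x/t)$, and since $f(0)=0$ and $f(y)\ge0$ once $|y|>r(f)$, this functional equation forces $\sum_{x\in\Lambda\setminus\{0\}}f(tx)=0$ for $t$ near $1$ as soon as $r(f)$ lies strictly below the minimal length. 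To reach a contradiction I would then bring in the differentiated Poisson formula, which controls the radial derivative of $f$ at the lattice radii, together with a Fourier interpolation argument in the spirit of \cite{CKMRV2}, forcing $f\equiv0$; this is the precise sense in which $\mathbb{A}_{-1}(8)\ge\sqrt2$ and $\mathbb{A}_{-1}(24)\ge2$ are corollaries of the sphere packing work \cite{Vi,CKMRV}, while $\mathbb{A}_{-1}(1)\ge1$ already follows from an elementary argument of Logan \cite{Lo} (see also \cite{CE}). For $d=12$ with $s=+1$ one replaces Poisson summation by the summation formula for radial Schwartz functions on $\R^{12}$ built from the Eisenstein series $E_6$ in \cite{CG}, and the same sign bookkeeping yields $\mathbb{A}_{+1}(12)\ge\sqrt2$.

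For the matching \emph{upper bounds} I would exhibit extremizers. When $d=1$, $s=-1$, take the bandlimited function $f(x)=\sin^2(\pi x)/(x^2-1)$, with the removable singularities at $x=\pm1$ filled in: it lies in $L^1(\R)$, is continuous and even, satisfies $\f=-f$ and $f(0)=0$, is $\le0$ on $(-1,1)$ and $\ge0$ on $\{|x|\ge1\}$, so $r(f)=1$ and $\mathbb{A}_{-1}(1)\le1$. When $d\in\{8,24\}$ with $s=-1$, or $d=12$ with $s=+1$, one constructs a radial Schwartz eigenfunction $f$ of $\F_d$ of eigenvalue $s$ vanishing at the origin which changes sign exactly once --- at the critical radius $\sqrt2$ for $d\in\{8,12\}$ and $2$ for $d=24$ --- and otherwise vanishes only to second order at the remaining radii $\sqrt{2n}$ dictated by the lattice (respectively by the $E_6$ summation formula), so that $r(f)$ equals that critical radius. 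This construction is carried out by the Laplace-transform-of-modular-forms technique of Viazovska \cite{Vi}, as adapted in \cite{CKMRV,CKMRV2} for $d=24$ and in \cite{CG} for $d=12$. Matching the bounds gives the four equalities.

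I expect the genuine obstacle to be this last construction, together with the companion interpolation and summation formulae in the exceptional dimensions: it is exactly here that the modular and arithmetic coincidences special to $d=8$, $12$, $24$ enter, and it is why these four --- and so far only these --- values of $\mathbb{A}_s(d)$ are known exactly. Everything else reduces to checking sign conditions and invoking these deep inputs.
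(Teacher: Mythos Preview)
Your outline is correct and tracks the paper's own account of Theorem~B closely. Note that the paper does not give a self-contained proof of Theorem~B --- it is stated as a compilation of results from \cite{CE, Vi, CKMRV, CG} --- and the surrounding discussion sketches exactly the structure you propose: reduction to the eigenfunction class $\mathcal{A}^{**}_s(d)$; lower bounds via Poisson-type summation over $\Z$, $E_8$, the Leech lattice (and the $E_6$-based formula of \cite{CG} for $d=12$); and upper bounds via the explicit extremizers (the bandlimited $\sin^2(\pi x)/(x^2-1)$ for $d=1$ and the Viazovska-type modular constructions for $d\in\{8,12,24\}$). The paper's extended remark also makes explicit the route $\mathbb{A}_{-1}(d)\le\mathbb{A}_{LP}(d)$ via $f=\widehat g-g$, which delivers the upper bounds in $d=8,24$ directly from the sphere-packing magic functions.

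One small overcomplication in your lower-bound sketch: you invoke the Fourier interpolation theorem of \cite{CKMRV2} to force $f\equiv 0$. That works, but is heavier (and anachronistic) compared to what the paper's phrase ``via the classical Poisson summation formula'' points to. The lighter argument is that Poisson over the dilates $t\Lambda$ for $t$ ranging through the open interval $(r(f)/m,\, m/r(f))$ forces the radial profile $f_0$ to vanish on $\bigcup_{n\ge1}\big(\sqrt{2n}\,r(f)/m,\ \sqrt{2n}\,m/r(f)\big)$; since $\sqrt{(n+1)/n}\to 1$, these intervals eventually overlap and cover a half-line, so $f$ --- and hence $\widehat f=-f$ --- is compactly supported, which forces $f\equiv 0$. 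Either route still leaves the genuine technical point (justifying Poisson summation for merely continuous $L^1$ eigenfunctions) to be handled by approximation, and both the paper and your outline defer that to the cited sources.
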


It is not known in general whether the search for the infimum in \eqref{20200317_13:30} can be restricted to Schwartz functions. This is only known to be true in the cases of Theorem B and in the additional case $(s, d) = (+1,1)$, recently established in \cite{GOR2}. It is conjectured that $\mathbb{A}_{-1}(2) = (4/3)^{1/4}$ and that $\mathbb{A}_{+1}(1) = (2\varphi)^{-1/2}$, where $\varphi = (1+\sqrt{5})/2$ is the golden ratio; see \cite[Conjectures 1.6 and 1.7]{GOR1}. The recent work \cite{GOR1} considers extensions of the $(\pm1)$-sign uncertainty principles to a more abstract operator setting, with very interesting applications to Fourier series and spherical harmonics, among others, and it will have important connections to the present paper. In a nutshell, this is the state of the art in this problem. 

\smallskip

A natural question that arises is the following: would there be suitable formulations of the sign uncertainty principle associated to the remaining eigenvalues $\pm i$? This was one of the original motivations for this work and, as we shall see, it will drive us to more general versions of such principles in the Euclidean space.

\smallskip

\noindent {\sc Remark:} (Connection between sign Fourier uncertainty and sphere packing). In \cite[Theorem 3.2]{CE}, Cohn and Elkies considered the following Fourier optimization problem, now regarded as the linear programming bound for the sphere packing problem. Consider the class
\begin{equation*}
\mathcal{A}_{LP}(d) = \left\{
                \begin{array}{ll}
                  g\in L^1(\R^d)\setminus\{{\bf 0}\}\text{ continuous, even, real-valued and such that}\  \widehat{g}\in L^1(\R^d);\\
                  g(0) = \widehat{g}(0) = 1;\\
                  -g \ \text{is eventually non-negative};\\
                  \widehat{g} \ \text{is non-negative};
                \end{array}
              \right\},
\end{equation*}
and define
\begin{equation*}
\mathbb{A}_{LP}(d) := \inf_{g\in\mathcal{A}_{LP}(d)} r(-g).
\end{equation*}
They showed that, given any sphere packing $\mathcal{P} \subset \R^d$ of congruent balls, its {\it upper density} $\Delta(\mathcal{P})$ (i.e. the fraction of the space covered by the balls in the packing; see \cite[Appendix A]{CE} for details) satisfies
$$\Delta(\mathcal{P}) \leq \mathbb{A}_{LP}(d)^d \, |B_{\frac12}|.$$
Numerical experiments suggested that this bound was sharp in dimensions $d=1,2,8$ and $24$, the latter three for the honeycomb, $E_8$ and Leech lattices, respectively. It was already pointed out in \cite{CE} that $\mathbb{A}_{LP}(1)=1$. In \cite{CKMRV}, Viazovska found the extremal function to show that $\mathbb{A}_{LP}(8) = \sqrt{2}$, hence establishing the optimality of the $E_8$-lattice in $d=8$. Later, Cohn, Kumar, Miller, Radchenko and Viazovska \cite{CKMRV} found the extremal function to show that $\mathbb{A}_{LP}(24) = 2$ and established the optimality of the Leech lattice in $d=24$. It is a classical theorem, proved by other methods, that the honeycomb lattice is optimal if $d=2$; see e.g. \cite{Hales}. Hence, it is conjectured that $\mathbb{A}_{LP}(2) = (4/3)^{1/4}$, but the corresponding extremal function has not yet been discovered. The connection between the $(-1)$-uncertainty principle and the linear programming bound is simple: if $g \in \mathcal{A}_{LP}(d)$ observe that $f:= \widehat{g} - g \in \mathcal{A}_{-1}(d)$ and that $r(f) \leq r(-g)$. Hence, one plainly has $\mathbb{A}_{-1}(d) \leq \mathbb{A}_{LP}(d)$. It is conjectured that, in fact, one has $\mathbb{A}_{-1}(d) = \mathbb{A}_{LP}(d)$ for all $d \geq 1$ (see \cite[Conjecture 8.2]{CE}, \cite{CG} and \cite{GOR1}) but, so far, this has only been established in the cases given by \eqref{20200327_14:00}.

\subsection{Generalized sign Fourier uncertainty} In what follows we write $x = (x_1, x_2, \ldots, x_d) \in \R^d$ for our generic variable (from now on used for both $f$ and $\f$). Related to \eqref{20200324_00:22}, there exist Heisenberg-type principles in the literature that say that $f$ and $\f$ cannot be simultaneously concentrated around the zero set of a function $Q:\R^d \to \R$. For instance, when $Q$ is a non-degenerate quadratic form on $\R^d$, a corollary of a theorem of Shubin, Vakilian and Wolff \cite{SVW} (see also \cite[Corollary 2.20]{BDsurvey}) establishes 
\begin{equation}\label{20200324_00:231}
    ||f||_2^2\le C \, \big\| Q f \big\|_2\cdot \big\| Q \f \big\|_2
\end{equation}
for $f \in L^2(\R^d)$, while Demange \cite{dem} establishes \eqref{20200324_00:231} when $Q(x)=|x_1|^{\gamma_1}|x_2|^{\gamma_2}\ldots|x_d|^{\gamma_d}$ with $\gamma_j>0$ for $1\le j\le d$. In a vague analogy to such results, we now consider a situation where the signs of $f$ and $\f$ at infinity are prescribed by a given generic function $P$ that we now describe.

\smallskip

Throughout the paper we let $P:\R^d \to \R$ be a measurable function, not identically zero on $\R^d \setminus \{0\}$, verifying:
\begin{itemize}
\item[(P1)] $P  \in L^1_{\rm loc}(\R^d)$.

\smallskip

\item[(P2)] $P$ is either even or odd. We let $\frak{r} \in \{0,1\}$ be such that 
\begin{equation}\label{20200404_03:06}
P(-x) = (-1)^{\frak{r}} P(x)
\end{equation}
for all $x \in \R^d$.\
\end{itemize}
We shall also consider the following pool of additional assumptions. In each of our results below, an appropriate subset of these may be required.

\begin{itemize}
\item[(P3)] $P$ is annihilating in the following sense: if $f \in L^1(\R^d)$ is a continuous eigenfunction of the Fourier transform such that $Pf$ is eventually zero then $f = {\bf  0}$.

\smallskip

\item[(P4)]  $P$ is homogeneous. That is, there is a real number $\gamma > -d$ such that 
\begin{equation}\label{20200401_00:18}
P(\delta x) = \delta^{\gamma} P(x)
\end{equation}
for all $\delta >0$ and $x \in \R^d$.

\smallskip

\item[(P5)] The sub-level set $A_{\lambda} = \{x \in \R^d \,:\, |P(x)|\leq \lambda\}$ has finite Lebesgue measure for some $\lambda >0$.

\smallskip

\item[(P6)] The sub-level set $A_{\lambda} = \{x \in \R^d \,:\, |P(x)|\leq \lambda\}$ is bounded for some $\lambda >0$.

\smallskip

\item[(P7)] $P \in L^{\infty}_{{\rm loc}}(\R^d)$.

\smallskip

\item[(P8)] $P\,e^{-\lambda \pi |\cdot |^2} \in L^1(\R^d)$ for all $\lambda >0$.

\smallskip

\item[(P9)] (Sign density) For each $x \in \R^d\setminus\{0\}$ such that $P(x) \neq 0$ we have 
\begin{equation*}
\liminf_{\varepsilon \to 0} \frac{\big| \{y \in \R^d :  P(y) P(x) > 0\} \cap B_{\varepsilon}(x)\big|}{\big|B_{\varepsilon}(x)\big|} >0.
\end{equation*}
\end{itemize}

\smallskip

\noindent{\sc Remark:} Condition (P3) above holds in a variety of situations. A simple one would be if the set $\{x \in \R^d \,:\, P(x) \neq 0\}$ is dense in $\R^d$  (in this case, $Pf$ eventually zero implies that $f$ has compact support). Another one is if the set $\{x \in \R^d \,:\, P(x) = 0\}$ has finite Lebesgue measure (hence (P6) implies (P5) that implies (P3)). In this case, $Pf$ eventually zero implies that $f$ is supported on a set of finite measure, and hence $f = {\bf  0}$ by Lemma \ref{AB_unc} below. Note also that (P1) and (P4) imply (P8).

\smallskip

We investigate the sign uncertainty principles in a more general setting as follows. In our formulation, it will be convenient to think of the competing conditions at the origin as weighted integrals over $\R^d$, via the Fourier transform. In this sense, the conditions $sf(0)\le 0$ and  $\f(0)\le 0$ appearing in \eqref{20200410_17:03} should be viewed as $\int_{\R^d}s\f \leq 0$ and $\int_{\R^d} f \leq 0$, respectively. Assume that our function $P$ verifies properties (P1), (P2), (P3) and (P4) above and let $s\in \{+1,-1\}$ be a sign. Consider the following class of functions, with suitable parity and integrability conditions (note that we move to a slightly different notation to denote the dependence on the function $P$),
\begin{equation}\label{20200510_10:23}
\!\!\!\!\! \mathcal{A}_{s}(P;d) = \!\left\{\!\!
                \begin{array}{ll}
                  f\in L^1(\R^d)\setminus\{{\bf 0}\}\text{ continuous, real-valued and such that } f(-x)= (-1)^\frak{r} f(x);\\
                 \f, Pf, P\f \in L^1(\R^d);\\
                  \int_{\R^d} Pf \le 0\ , \  \int_{\R^d} s(-i)^\frak{r}P\f \le 0;\\
                  P f ,\  s (-i)^\frak{r} P\f \text{ are eventually non-negative}.
                \end{array}
            \!\!\!\! \right\}\!.
\end{equation}
As before, let us define 
\begin{equation}\label{20200411_12:04}
\mathbb{A}_{s}(P;d) = \inf_{f\in\mathcal{A}_{s}(P;d) }\sqrt{r(Pf)\,.\,r\big( s (-i)^\frak{r}P\f \, \big)}.
\end{equation}
Note that if $f \in \mathcal{A}_{s}(P;d)$, any rescaling $f_\delta(x) := f(\delta x)$, for $\delta >0$, also belongs to $\mathcal{A}_{s}(P;d)$, and the product $r(Pf)\,.\,r\big( s (-i)^\frak{r}P\f \,\big)$ is invariant. This is due to condition (P4). 

\smallskip

A particularly interesting case is when $P$ is a homogeneous polynomial of degree $\gamma \in \N \cup \{0\}$ in $d$ variables. In this case, the integral conditions in the definition of $\mathcal{A}_{s}(P;d)$ are equivalent to conditions given by the differential operator associated to $P$ applied to  $f$ and $\f$ and evaluated at the origin (provided $f$ and $\f$ are sufficiently smooth). Note that, in principle, we do not require in this case that $|x|^{\gamma}f, |x|^{\gamma}\f \in L^1(\R^d)$, but only the minimal integrability condition $Pf, P\f \in L^1(\R^d)$. The class $\mathcal{A}_{s}(d)$ considered in \eqref{20200410_17:03} corresponds to the case $P = {\bf 1}$.

\vspace{0.2cm}

The question on whether the uncertainty principle holds for the families $\mathcal{A}_{s}(P;d)$, and even the question on whether these families are at least non-empty, may possibly depend on the function $P$; and finding necessary and sufficient conditions seems to be a subtle issue. Before moving into that discussion, let us observe that we can restrict the search to a certain subclass $\mathcal{A}^{*}_{s}(P;d) \subset \mathcal{A}_{s}(P;d)$ of eigenfunctions defined by
\begin{equation}\label{20200508_13:30}
\mathcal{A}^{*}_{s}(P;d) = \left\{
                \begin{array}{ll}
                  f\in L^1(\R^d)\setminus\{{\bf 0}\}\text{ continuous, real-valued and such that } \f= si^{\frak{r}}f;\\
                 Pf \in L^1(\R^d);\\
                 \int_{\R^d} Pf \leq 0\,; \\
                  P f  \text{ is eventually non-negative}.
                \end{array}
              \right\}.
\end{equation}
We also define
\begin{equation}\label{20200508_13:31}
\mathbb{A}^{*}_{s}(P;d) = \inf_{f\in\mathcal{A}^{*}_{s}(P;d) } r(Pf).
\end{equation}
Assuming that the class $\mathcal{A}_{s}(P;d)$ is non-empty, we claim that $\mathcal{A}^*_{s}(P;d)$ is also non-empty and that
\begin{equation}\label{20200510_10:19}
\mathbb{A}_{s}(P;d) = \mathbb{A}^{*}_{s}(P;d).
\end{equation}
To see this, start with any function $f \in \mathcal{A}_{s}(P;d)$. By taking an appropriate rescaling $f_\delta(x) := f(\delta x)$, we may assume that $r(Pf) = r\big( s (-i)^\frak{r}P\f \, \big)$. Observe that $s (-i)^\frak{r}\f \in \mathcal{A}_{s}(P;d)$ and let 
$$w = f + s(-i)^{\frak{r}}\f.$$
Then $\widehat{w} = si^{\frak{r}} w$,  $\int_{\R^d} Pw \leq 0$ and $r(Pw) \leq r(Pf)$. Note that $w$ is not identically zero. In fact, if $w={\bf 0}$, we would have $Pf$ and $s(-i)^{\frak{r}}P\f  = -Pf$ eventually non-negative, which would make $Pf$ eventually zero. By condition (P3) we would have $f = {\bf 0}$, a contradiction. Hence $w \in \mathbb{A}^{*}_{s}(P;d)$ and does a job at least as good as the original $f$.

\smallskip

This is how the eigenfunctions of the Fourier transform (now with all possible eigenvalues) play a role in this discussion. Observe that we may consider directly the {\it eigenfunction extremal problem} described in \eqref{20200508_13:30} - \eqref{20200508_13:31}. In this case, {\it we do not need to assume conditions} (P3) and (P4) for our function $P: \R^d \to \R$, leaving us essentially with the fully generic setup of (P1) and (P2). When we consider the eigenfunction formulation in the results below, the reader should keep in mind the original formulation \eqref{20200510_10:23} - \eqref {20200411_12:04}, and identity \eqref{20200510_10:19}, if applicable.

\smallskip

Note that all of our conditions (P1) -- (P9) are invariant under rotations and reflections. Letting $O(d)$ be the group of linear orthogonal transformations in $\R^d$, if $R \in O(d)$ one can verify that $\mathbb{A}^*_{s}(P;d) = \mathbb{A}^*_{s}(P\circ R;d)$ and $\mathbb{A}_{s}(P;d) = \mathbb{A}_{s}(P\circ R;d)$ by a suitable change of variables.

\smallskip

It is important to emphasize that we {\it do not} identify functions $P$ that are equal almost everywhere with respect to the Lebesgue measure. In fact, even if two functions $P_1$ and $P_2$ are equal a.e., the two problems \eqref{20200508_13:30} - \eqref{20200508_13:31} that they generate may be very different. Consider for example, in dimension $d=1$, $P_1 = {\bf 1}$ and $P_2(x) = 1$ for all $x \in \R \setminus \{a_n\}_{n \in \Z}$, $P_2(a_n) = -1$, where $\{a_n\}_{n \in \N}$ is a given sequence of points with $\lim_{n \to \infty}|a_n| = \infty$. Any function $f \in \mathcal{A}^{*}_{s}(P_2;1)$ will necessarily have zeros at $a_n$ for $n \geq n_0$. In this regard, even problems where $P(x) = 0$ a.e. are non-trivial, and we quickly realize that we are in a vastly uncharted territory.

\smallskip

We first move in the direction of identifying some important situations when these classes are non-empty and providing reasonable upper bounds.

\begin{theorem}[Non-empty classes and upper bounds]\label{Thm_Harmonic_part}
Let $P:\R^d \to \R$ be a function verifying properties {\rm (P1), (P2)} and {\rm(P8)}. Assume that $P = H\,.\, Q$, where $H:\R^d \to \R$ is a homogeneous and harmonic polynomial of degree $\ell \geq 0$, and $Q:\R^d \to \R$ is eventually non-negative. Then $\mathcal{A}^{*}_{s}(P;d)$ is non-empty. If, in addition, $P$ verifies {\rm (P4)}, letting $\frak{r}$ as in \eqref{20200404_03:06} and $\gamma > -d$ as in \eqref{20200401_00:18} we have
\begin{equation*}
 \mathbb{A}^*_{s}(P;d) \leq \sqrt{\frac{\max\{d + \ell + \gamma\, ,\, \ell - \gamma\}}{2\pi}} \ + O(1),
\end{equation*}
with the implied constant being universal; in fact, when $s\,i^{\ell + \frak{r} } = -1$ and $-d <\gamma \leq -\frac{d}{2}$ we have 
\begin{equation*}
 \mathbb{A}^*_{s}(P;d) = 0.
\end{equation*}
\end{theorem}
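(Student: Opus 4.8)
The plan is to build every competitor for $\mathbb{A}^*_s(P;d)$ as a Gaussian (or a Gaussian times a Laguerre polynomial) multiplied by the harmonic factor $H$, with Bochner's relation as the engine: for $H$ homogeneous harmonic of degree $\ell$ and $\mu>0$,
\[
\mathcal{F}_d\!\big[H(x)\,e^{-\mu\pi|x|^2}\big](\xi)=(-i)^\ell\,\mu^{-(d/2+\ell)}\,H(\xi)\,e^{-\pi|\xi|^2/\mu},
\]
and more generally $H(x)L_n^{(d/2+\ell-1)}(2\pi|x|^2)e^{-\pi|x|^2}$ is an eigenfunction of $\mathcal{F}_d$ with eigenvalue $(-1)^n(-i)^\ell$. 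Hence for each reciprocal pair $\{\mu,1/\mu\}$, $\mu\ge1$, there is (for a ratio $c_1/c_2\in\{\pm\mu^{d/2+\ell}\}$) a real eigenfunction $H(x)\big[c_1 e^{-\mu\pi|x|^2}+c_2 e^{-\pi|x|^2/\mu}\big]$, and keeping track of the powers of $i$ shows that the eigenvalue $\mu_0:=s\,i^{\frak{r}}$ required in \eqref{20200508_13:30} is realized by such building blocks exactly when $\ell\equiv\frak{r}\pmod 2$; since (P4) forces $\ell\equiv\frak{r}$ (otherwise $Q$ would be eventually non-negative and odd along the dense set $\{H\neq0\}$, forcing $P\equiv0$), this parity restriction is harmless for the quantitative statements. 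For any such $f$ the two structural facts that drive everything are: $Pf=H^2\,Q\,\phi(|x|)$ with $\phi$ the (Gaussian) radial profile, so $Pf$ is eventually non-negative precisely when $\phi$ is (as $H^2\ge0$ and $Q$ is eventually non-negative) and $r(Pf)$ equals the last sign change of $\phi$; and $Pf,\f,P\f\in L^1$ by (P8) (a polynomial times $P e^{-\lambda\pi|\cdot|^2}$ is dominated by $|P|e^{-\lambda'\pi|\cdot|^2}$), so $f$ is admissible once $\int_{\R^d}Pf\le0$.

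For non-emptiness, under (P1), (P2), (P8) only, I would take $\phi_\mu(r)=\varepsilon\,\mu^{d/2+\ell}e^{-\mu\pi r^2}+e^{-\pi r^2/\mu}$ with $\varepsilon=s\,i^{\ell+\frak{r}}\in\{\pm1\}$ (the sign forced by the eigenvalue condition), which is eventually positive, and study the real-analytic function $g(\mu):=\int_{\R^d}H^2Q\,\phi_\mu$. If $\varepsilon=-1$ then $\phi_1\equiv0$, so $g(1)=0$: either $g\equiv0$ (any $\mu\neq1$ gives a nonzero admissible $f$ with $\int Pf=0$), or $g\not\equiv0$; in the remaining cases one uses $f=H\big[\phi_\mu+b\,\phi_{\mu'}\big]$ with $\mu>\mu'>1$, where the outermost Gaussian $e^{-\pi|x|^2/\mu}$ has positive coefficient for every $b\in\R$, so $Pf$ stays eventually non-negative while $\int Pf=g(\mu)+b\,g(\mu')$ is driven $\le0$ by choosing $b$. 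In the degenerate case $\ell\not\equiv\frak{r}$, where $P$ vanishes a.e. outside a ball, any fixed eigenfunction of eigenvalue $\mu_0$, suitably signed, already lies in $\mathcal{A}^*_s(P;d)$.

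For the quantitative bound, assume (P4) and set $m:=d+\ell+\gamma>0$. Polar coordinates give $\int_{\R^d}Pf=c_0\int_0^\infty\rho^{m-1}\phi(\rho)\,d\rho$ with $c_0=\int_{\SS^{d-1}}H^2Q\,d\sigma>0$, and $\int_0^\infty\rho^{m-1}e^{-a\pi\rho^2}d\rho$ is a Gamma factor times $a^{-m/2}$; in particular the reciprocal pair contributes a weight $\propto\mu^{(\ell-\gamma)/2}+\mu^{(d+\ell+\gamma)/2}$. I would take $f=H\big[\mu^{d/2+\ell}e^{-\mu\pi|x|^2}+e^{-\pi|x|^2/\mu}+c\,w(|x|^2)e^{-\pi|x|^2}\big]$, with $w\equiv1$ when $s\,i^{\ell+\frak{r}}=+1$ and $w(t)=2\pi t-(d/2+\ell)$ when $s\,i^{\ell+\frak{r}}=-1$ (so the last term is itself an eigenfunction of eigenvalue $\mu_0$), and fix $c$ by $\int Pf=0$. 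The last sign change of the resulting $\phi$ is governed by the crossover of its two slowest-decaying Gaussians, located by a direct computation at
\[
r(Pf)^2=\frac{\ln\!\big(\mu^{(\ell-\gamma)/2}+\mu^{(d+\ell+\gamma)/2}\big)}{\pi\,(1-1/\mu)}+O(1);
\]
using $\ln(\mu^a+\mu^b)=\max\{a,b\}\ln\mu+O(1)$ and $\ln\mu/(1-1/\mu)\to1$ as $\mu\to1^+$, and optimizing $\mu$ (the optimal $\mu$ is $1+O(m^{-1/2})$), one gets $r(Pf)^2\le\frac{\max\{d+\ell+\gamma,\ \ell-\gamma\}}{2\pi}+O(\sqrt m)$, hence $\mathbb{A}^*_s(P;d)\le\sqrt{\max\{d+\ell+\gamma,\ell-\gamma\}/2\pi}+O(1)$.

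For the vanishing statement, suppose $s\,i^{\ell+\frak{r}}=-1$ and $-d<\gamma\le-d/2$. Then for every $\mu>1$ the building block $f_\mu(x)=H(x)\big[-\mu^{d/2+\ell}e^{-\mu\pi|x|^2}+e^{-\pi|x|^2/\mu}\big]$ is a real eigenfunction of eigenvalue $\mu_0$; its profile is negative near $0$, rises to a single maximum, and decays to $0^+$, so it has a unique sign change at $r_\mu$ with $r_\mu^2=\dfrac{(d/2+\ell)\ln\mu}{\pi(\mu-1/\mu)}\to0$ as $\mu\to\infty$, while the one-dimensional reduction yields $\int_{\R^d}Pf_\mu=C\big(\mu^{(d+\ell+\gamma)/2}-\mu^{(\ell-\gamma)/2}\big)$ with $C>0$, which is $\le0$ for all $\mu>1$ precisely because $\tfrac{d+\ell+\gamma}{2}\le\tfrac{\ell-\gamma}{2}$. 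Thus $f_\mu\in\mathcal{A}^*_s(P;d)$ for every $\mu>1$ and $\mathbb{A}^*_s(P;d)\le\inf_{\mu>1}r_\mu=0$. I expect the main obstacle to be the uniformity of the sign-change estimate: to get a genuinely universal $O(1)$ one must (i) bound the influence of the non-dominant Gaussian near the crossover and (ii) choose $\mu=\mu(m)\to1^+$ so the multiplicative error on $c$ contributes only $O(\sqrt m)$ to $r(Pf)^2$, with an extra perturbative argument around $f_\mu$ needed when $\gamma$ is close to $-d/2$ and $c$ becomes near-singular; the $i$-bookkeeping for the $\frak{r}=1$ cases is a minor algebraic nuisance, always resolved by the identity $\mu_0^2=(-1)^{\frak{r}}$.
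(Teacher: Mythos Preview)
Your approach is essentially the one the paper takes: build eigenfunctions as $H$ times reciprocal Gaussian pairs via Bochner's relation, use $PH=H^2Q$ to reduce sign questions to the radial profile $\phi$, combine two such eigenfunctions to force $\int_{\R^d}Pf\le0$, and locate the last sign change of $\phi$ by the crossover of the slowest Gaussian with the correction. The non-emptiness argument (combine two eventually-positive eigenfunctions if neither alone has non-positive $P$-integral), the $+1$ quantitative bound with $A_0=\mu^{(d+\ell+\gamma)/2}+\mu^{(\ell-\gamma)/2}$ and $\mu=1+\rho^{-1/2}$, and the vanishing statement via $f_\mu=H[-\mu^{d/2+\ell}e^{-\mu\pi|x|^2}+e^{-\pi|x|^2/\mu}]$ are all exactly as in the paper.

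The one genuine difference is your correction term in the case $s\,i^{\ell+\frak{r}}=-1$, $\gamma>-d/2$: you propose the Laguerre factor $w(t)=2\pi t-(d/2+\ell)$, whereas the paper subtracts a \emph{second} reciprocal Gaussian pair $h_1=H\big(e^{-\pi|x|^2/b_1}-b_1^{(d+2\ell)/2}e^{-b_1\pi|x|^2}\big)$ at a nearby scale $1<b_1<a_1$ (specifically $a_1=1+2\alpha$, $b_1=1+\alpha$). The paper's choice keeps the correction itself eventually non-negative and purely Gaussian, so the crossover estimate stays of the form $r^2\le\frac{\rho}{2\pi}(1+O(\alpha))+O(1/\alpha)$ with a genuinely universal $O(1)$ after taking square roots. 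Your Laguerre correction introduces a polynomial factor $2\pi r^2$ into the crossover inequality $(1-1/\mu)\pi r^2>\ln(|c|\cdot 2\pi r^2)$, which contributes an extra $\ln\rho$ to the right-hand side; this threatens to degrade the final bound to $\sqrt{\rho/2\pi}+O(\log\rho)$ unless you argue more carefully (e.g.\ observing that the $\ln(d/2+\gamma)$ in $\ln|c|$ partially cancels against $\ln(2\pi r^2-(d/2+\ell))$). The paper's two-scale trick sidesteps this bookkeeping entirely.

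One slip to fix: in your quantitative display $f=H\big[\mu^{d/2+\ell}e^{-\mu\pi|x|^2}+e^{-\pi|x|^2/\mu}+c\,w(|x|^2)e^{-\pi|x|^2}\big]$, the first Gaussian must carry the sign $\varepsilon=s\,i^{\ell+\frak{r}}$ (as in your non-emptiness paragraph); otherwise, when $\varepsilon=-1$, the displayed $f$ is not an eigenfunction of eigenvalue $s\,i^{\frak{r}}$.
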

\noindent{\sc Remark:} Note that in Theorem \ref{Thm_Harmonic_part} we may have $\ell >\gamma$. A simple example would be\footnote{Recall that $\sgn:\R \to \R$ is defined by $\sgn(t) =1$, if $t>0$; $\sgn(0) = 0$; and $\sgn(t) =-1$, if $t<0$.} $P(x) = \sgn (x_1)$, in which $H(x) = x_1$, and $Q(x) = \sgn(x_1)/x_1$ for $x_1\neq 0$ and zero otherwise. We shall not be particularly interested in more explicit quantitative estimates for the upper bounds here.

\smallskip

There is an interesting relationship between the sign uncertainty principles and other classical uncertainty principles. For our purposes, the relevant inequality would be an analogue of \eqref{20200324_00:231}, with $L^1$-norms on the right-hand side. For instance, a basic application of the Hausdorff-Young inequality yields
\begin{equation*}
\|f\|^2_2 \leq \|f\|_1\, \|\f\|_1
\end{equation*}
for any $f \in L^2(\R^d)$, and similar ideas used to prove \eqref{20200324_00:22}, coupled with the Hausdorff-Young inequality, yield
\begin{equation}\label{20200401_21:43}
\|f\|^2_2 \leq 4\pi\,\|x_1f\|_1\, \|x_1\f\|_1
\end{equation}
for any $f \in L^2(\R^d)$ (see, for instance, \cite[Corollary 2.6 and Section 3]{FSsurvey}). By a change of variables given by any $R \in O(d)$ we see that \eqref{20200401_21:43} holds with the function $x_1$ replaced by any linear homogeneous polynomial in $x_1, x_2, \ldots, x_d$. Motivated by such examples we now define a class of admissible functions $P$ that will play an important role in our study. As we shall see, this will be an asset (but not the only one) in establishing sign uncertainty principles.

\enlargethispage{1\baselineskip}

\begin{definition*}[Admissible functions] A function $P:\R^d \to \R$ verifying properties {\rm (P1)} and {\rm (P2)} is said to be admissible if there exists an exponent $q$ with $1 \leq q \leq \infty$ and a positive constant $C(P;d;q)$ such that:
\begin{enumerate}
\item[(i)] For all $f \in L^1(\R^d)$, with $\f= \pm i^{\frak{r}}f$ and $Pf \in L^1(\R^d)$, we have
\begin{equation}\label{20200331_11:16}
    ||f||_q\le C(P;d;q) \, \| P f \|_1.
\end{equation}
\item[(ii)] If $q >1$ we have $P \in L^{q'}_{{\rm loc}}(\R^d)$. If $q =1$ we have $\lim_{r \to 0^+} \|P\|_{L^{\infty}(B_r)} = 0$.\footnote{Throughout the text $1/q + 1/q' = 1$.}
\end{enumerate}
\end{definition*}
The fact that $\f= \pm i^{\frak{r}}f$, together with the Hausdorff-Young inequality, directly implies that $\|f\|_q \leq \|f\|_1$ for all $1 \leq q \leq \infty$. Hence, if \eqref{20200331_11:16} holds for $q=1$, it holds for any exponent $1 \leq q \leq \infty$ with $C(P;d;q) \leq C(P;d;1)$. The finiteness of the sub-level sets is related to the concept of admissibility as our next result shows.

\begin{theorem}[Sufficient conditions for admissibility]\label{Thm_finite_level_sets}
Let $P:\R^d \to \R$ be a function verifying properties {\rm (P1), (P2)} and {\rm (P5)}. Then inequality \eqref{20200331_11:16} holds with $q=1$. In particular, $P$ is admissible with respect to $q = \infty$. If, in addition, $P$ verifies property {\rm (P4)} with degree $\gamma \geq 0$ in \eqref{20200401_00:18}, we can bound the constant $C(P;d;1)$ as:
\begin{itemize}
\item[(i)] If $\gamma = 0$ then 
\begin{equation}\label{20200509_16:53}
C(P;d;1) \leq \big({\rm ess \, inf}|P|\big)^{-1} < \infty.
\end{equation}
\item[(ii)] If $\gamma >0$ then
\begin{equation}\label{20200416_22:56}
C(P;d;1)\leq \left( 1 + \frac{\gamma}{d}\right) \left[ \left( 1 + \frac{d}{\gamma}\right) |A_1|\right]^{\frac{\gamma}{d}}.
\end{equation}
\end{itemize}
\end{theorem}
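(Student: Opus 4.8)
The plan is to bound the $L^1$-mass of an eigenfunction on the sub-level set $A_\lambda$ for a well-chosen $\lambda>0$, and then optimize in $\lambda$. Let $f\in L^1(\R^d)$ with $\widehat f=\pm i^{\frak r}f$ and $Pf\in L^1(\R^d)$. Since $|\widehat f|=|f|$ pointwise, $\widehat f\in L^1$, so $f\in L^1\cap L^\infty\subset L^2$ and, as already noted in the text, $\|f\|_q\le\|f\|_1$ for every $1\le q\le\infty$; in particular $\|f\|_\infty\le\|f\|_1$. Fix $\lambda>0$ with $|A_\lambda|<\infty$ (possible by (P5)). Since $|P|>\lambda$ on $A_\lambda^c$,
\[
\|Pf\|_1\ \ge\ \lambda\!\int_{A_\lambda^c}\!|f|\ =\ \lambda\Big(\|f\|_1-\int_{A_\lambda}|f|\Big),
\]
so that
\begin{equation}\tag{$\star$}
\|f\|_1\ \le\ \tfrac1\lambda\,\|Pf\|_1\ +\ \int_{A_\lambda}|f|.
\end{equation}
If $\|Pf\|_1=0$ then $f$ vanishes a.e.\ off the finite-measure set $\{P=0\}\subseteq A_\lambda$, hence $f={\bf 0}$ by Lemma~\ref{AB_unc}; so we may assume $\|Pf\|_1>0$.

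Under the extra hypothesis (P4) with $\gamma\ge 0$ the two homogeneous cases are quick from $(\star)$. If $\gamma=0$, every $A_\lambda$ is a cone, so (P5) forces $|A_{\lambda_0}|=0$ for some $\lambda_0>0$, i.e.\ ${\rm ess\,inf}\,|P|>0$; then $|f|\le|Pf|/{\rm ess\,inf}\,|P|$ a.e.\ gives \eqref{20200509_16:53}. If $\gamma>0$, homogeneity gives $A_\lambda=\lambda^{1/\gamma}A_1$, hence $|A_\lambda|=\lambda^{d/\gamma}|A_1|$ with $|A_1|<\infty$ by (P5); bounding $\int_{A_\lambda}|f|\le|A_\lambda|\,\|f\|_\infty\le\lambda^{d/\gamma}|A_1|\,\|f\|_1$ in $(\star)$ yields $\|f\|_1\le\big(1-\lambda^{d/\gamma}|A_1|\big)^{-1}\lambda^{-1}\|Pf\|_1$ for $0<\lambda<|A_1|^{-\gamma/d}$, and a one-variable minimization of the constant (optimum at $\lambda^{d/\gamma}|A_1|=\gamma/(d+\gamma)$) produces exactly \eqref{20200416_22:56}.

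The substantive case is the general one, where $P$ need not be homogeneous and $|A_\lambda|$ cannot be made small, so the crude bound $\int_{A_\lambda}|f|\le|A_\lambda|\,\|f\|_1$ is useless once $|A_\lambda|\ge1$. Here I would invoke the qualitative Amrein--Berthier--Benedicks uncertainty principle (Lemma~\ref{AB_unc}) through the operator $T_\lambda g:=\one_{A_\lambda}\,\mathcal{F}(\one_{A_\lambda}g)$ on $L^2(\R^d)$: it has integral kernel $\one_{A_\lambda}(x)\,e^{-2\pi i x\cdot\xi}\,\one_{A_\lambda}(\xi)$, hence is Hilbert--Schmidt, hence compact, with Hilbert--Schmidt norm $|A_\lambda|$, and its operator norm $\theta:=\|T_\lambda\|$ satisfies $\theta\le1$ with $\theta<1$ strictly, since an extremizer $\|T_\lambda g\|_2=\|g\|_2$ forces both $g$ and $\widehat g$ to be supported in $A_\lambda$, whence $g={\bf 0}$ by Lemma~\ref{AB_unc}, and compactness upgrades this pointwise strict inequality to a strict bound on the supremum. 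Now split $f=\one_{A_\lambda}f+\one_{A_\lambda^c}f$ in $\widehat f=\pm i^{\frak r}f$, multiply by $\one_{A_\lambda}$, and take $L^2$-norms; using Plancherel and $T_\lambda(\one_{A_\lambda}f)=\one_{A_\lambda}\mathcal{F}(\one_{A_\lambda}f)$ this gives
\[
\|\one_{A_\lambda}f\|_2\ \le\ \theta\,\|\one_{A_\lambda}f\|_2\ +\ \|\one_{A_\lambda^c}f\|_2 .
\]
Since $\|\one_{A_\lambda^c}f\|_2^2\le\|f\|_\infty\!\int_{A_\lambda^c}\!|f|\le\lambda^{-1}\|f\|_1\|Pf\|_1$, we get $\int_{A_\lambda}|f|\le|A_\lambda|^{1/2}\|\one_{A_\lambda}f\|_2\le(1-\theta)^{-1}|A_\lambda|^{1/2}\big(\lambda^{-1}\|f\|_1\|Pf\|_1\big)^{1/2}$. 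Feeding this into $(\star)$ leaves $\|f\|_1\le\lambda^{-1}\|Pf\|_1+M\,\|f\|_1^{1/2}\|Pf\|_1^{1/2}$ with $M=(1-\theta)^{-1}|A_\lambda|^{1/2}\lambda^{-1/2}$, which, solved as a quadratic inequality in $(\|f\|_1/\|Pf\|_1)^{1/2}$, yields $\|f\|_1\le C(P;d;1)\|Pf\|_1$ with $C(P;d;1)\le\lambda^{-1}\big(1+|A_\lambda|^{1/2}/(1-\theta)\big)^2<\infty$ (a compactness--contradiction argument would work equally well). Finally $\|f\|_\infty\le\|f\|_1\le C(P;d;1)\|Pf\|_1$ is \eqref{20200331_11:16} for $q=\infty$, and requirement (ii) of admissibility at $q=\infty$ is precisely $P\in L^1_{\rm loc}(\R^d)$, i.e.\ (P1); so $P$ is admissible with respect to $q=\infty$. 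The one genuinely delicate point is the strict spectral gap $\theta<1$ — upgrading the qualitative Benedicks-type statement to a quantitative one via compactness of $T_\lambda$; the remainder is Hölder, Plancherel, and a one-variable optimization.
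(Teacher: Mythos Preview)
Your proof is correct and follows essentially the same route as the paper: both rely on the Amrein--Berthier uncertainty principle (the paper invokes it directly as Lemma~\ref{AB_unc}, while you recast it as the spectral gap $\theta=\|\one_{A_\lambda}\mathcal{F}\,\one_{A_\lambda}\|<1$ via compactness, which is the standard equivalent formulation) to control the mass of $f$ on $A_\lambda$ by its mass off $A_\lambda$, and both reach and optimize the same function $\lambda\mapsto \lambda^{-1}(1-\lambda^{d/\gamma}|A_1|)^{-1}$ in the homogeneous case. Two minor differences in execution are worth noting. In the general case, the paper handles the cross term via Hausdorff--Young, $\|\one_{A}\widehat{f_{A^c}}\|_2\le |A|^{1/2}\|\widehat{f_{A^c}}\|_\infty\le |A|^{1/2}\|f_{A^c}\|_1$, which closes \emph{linearly} in $\|f_{A^c}\|_1$ and avoids your quadratic inequality in $(\|f\|_1/\|Pf\|_1)^{1/2}$. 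Conversely, for the homogeneous case $\gamma>0$ your argument is actually simpler than the paper's: the direct bound $\int_{A_\lambda}|f|\le|A_\lambda|\,\|f\|_\infty\le|A_\lambda|\,\|f\|_1$ plugged into $(\star)$ reaches the same optimization target immediately, whereas the paper still routes through an $L^2$ estimate (replacing the Amrein--Berthier constant by the elementary $\|\one_A\widehat{f_A}\|_2\le|A|\,\|f_A\|_2$) before arriving there.
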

\noindent{\sc Remark:} Note that in the case $P$ homogeneous of degree $\gamma > 0$, the sub-level set $A_{\lambda}$ has finite measure (for any $\lambda >0$) if and only if 
\begin{equation*}
\int_{\mathbb{S}^{d-1}} |P(\omega)|^{-d/\gamma} \,\d \sigma(\omega) < \infty,
\end{equation*}
where $\sigma$ denotes the surface measure on the unit sphere $\mathbb{S}^{d-1} \subset \R^d$. 

\smallskip

In light of example \eqref{20200401_21:43}, note that Theorem \ref{Thm_finite_level_sets} is not a necessary condition for a function $P$ to be admissible. We are now in position to present a general version of the sign uncertainty principle associated to a function $P$. 

\begin{theorem}[Sign uncertainty]\label{Thm_general_sign_uncertainty}
Let $P:\R^d \to \R$ be a function verifying properties {\rm (P1)} and {\rm (P2)}. Assume that the class $\mathcal{A}^*_{s}(P;d)$ is non-empty and that $P$ is admissible with respect to an exponent $1 \leq q \leq \infty$. Then there exists a positive constant $C^*(P;d;q)$ such that  
\begin{equation}\label{20200513_15:28}
\mathbb{A}^*_{s}(P;d) \geq C^*(P;d;q).
\end{equation}
Moreover, 
\begin{itemize}
\item[(i)] If $P$ verifies properties {\rm (P5), (P7)} and {\rm (P9)}, there exist extremizers for $\mathbb{A}^*_{s}(P;d)$.
\smallskip
\item[(ii)] If $P$ verifies properties {\rm (P4)} and {\rm (P7)}, with degree $\gamma \geq 0$ in \eqref{20200401_00:18} and $K := \|P\|_{L^{\infty}(B_1)}$, 
\begin{equation}\label{20200402_00:50}
C^*(P;d;q)  \geq  \left(\frac{(d + \gamma q')\,\Gamma(d/2)}{2 \,\pi^{\frac{d}{2}} \,(2KC)^{q'}}\right)^{\frac{1}{(d + \gamma q')}},
\end{equation}
where $C = C(P;d;q)$ as in \eqref{20200331_11:16}. If $q = 1$ {\rm(}and hence $\gamma >0${\rm)}, the right-hand side of \eqref{20200402_00:50} should be understood as $(2KC)^{-1/\gamma}$.
\end{itemize}
\end{theorem}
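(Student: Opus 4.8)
I would prove the three assertions in order: first the universal lower bound \eqref{20200513_15:28} together with a definition of $C^*(P;d;q)$, then the explicit estimate \eqref{20200402_00:50}, and finally the existence of extremizers. The lower bound and the explicit estimate are short and rest on the localization trick and Hölder's inequality; the existence statement is where the real work lies.

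\emph{The lower bound.} Fix $f \in \mathcal{A}^{*}_{s}(P;d)$ and set $r := r(Pf)$. From the eventual non-negativity of $Pf$ and $\int_{\R^d} Pf \le 0$ one obtains the standard localization inequality $\|Pf\|_1 \le 2\int_{B_r} |Pf|$ (split the integral at radius $r$: the tail over $|x|>r$ is non-negative and at most $-\int_{B_r}Pf\le\int_{B_r}|Pf|$). Admissibility then gives $\|f\|_q \le C\,\|Pf\|_1 \le 2C\int_{B_r}|Pf|$ with $C=C(P;d;q)$; note $0<\|f\|_q\le C\|Pf\|_1<\infty$ since $f\not\equiv0$ is continuous. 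If $q>1$, Hölder on $B_r$ (legitimate because $P\in L^{q'}_{{\rm loc}}$) yields $\int_{B_r}|Pf|\le\|f\|_{L^q(B_r)}\|P\|_{L^{q'}(B_r)}\le\|f\|_q\|P\|_{L^{q'}(B_r)}$, and dividing by $\|f\|_q$ gives $1\le 2C\,\|P\|_{L^{q'}(B_r)}$; if $q=1$, the same step with $\int_{B_r}|Pf|\le\|P\|_{L^\infty(B_r)}\|f\|_1$ gives $1\le 2C\,\|P\|_{L^\infty(B_r)}$. By clause (ii) of admissibility, $\rho\mapsto\|P\|_{L^{q'}(B_\rho)}$ (resp.\ $\rho\mapsto\|P\|_{L^\infty(B_\rho)}$) is nondecreasing and tends to $0$ as $\rho\to0^+$ --- for $q>1$ by dominated convergence since $|P|^{q'}\in L^1(B_1)$, for $q=1$ by hypothesis. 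Hence
\[
C^{*}(P;d;q):=\sup\Big\{\rho>0:\ \|P\|_{L^{q'}(B_\rho)}<(2C)^{-1}\Big\}
\]
(with $\|P\|_{L^\infty}$ replacing $\|P\|_{L^{q'}}$ when $q=1$) is strictly positive, and the inequality above forces $r\ge C^{*}(P;d;q)$. Taking the infimum over $f$ proves \eqref{20200513_15:28}.

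\emph{The explicit bound (ii).} Under {\rm (P4)} with degree $\gamma\ge0$ and {\rm (P7)}, homogeneity and local boundedness give $|P(x)|\le K|x|^{\gamma}$ for a.e.\ $x$, with $K=\|P\|_{L^\infty(B_1)}$. For $1<q\le\infty$,
\[
\|P\|_{L^{q'}(B_\rho)}\le K\Big(\int_{B_\rho}|x|^{\gamma q'}\,\d x\Big)^{1/q'}=K\left(\frac{2\pi^{d/2}}{\Gamma(d/2)}\cdot\frac{\rho^{\,d+\gamma q'}}{d+\gamma q'}\right)^{1/q'},
\]
so the defining inequality $\|P\|_{L^{q'}(B_\rho)}<(2C)^{-1}$ holds for every $\rho$ below the right-hand side of \eqref{20200402_00:50}; hence $C^{*}(P;d;q)$ is at least that value. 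For $q=1$ one uses $\|P\|_{L^\infty(B_\rho)}=K\rho^{\gamma}$ to get $C^{*}(P;d;1)\ge(2KC)^{-1/\gamma}$. These are routine computations.

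\emph{Existence of extremizers.} Assume {\rm (P5), (P7), (P9)} and pick $(f_n)\subset\mathcal{A}^{*}_{s}(P;d)$ with $r(Pf_n)\downarrow\mathbb{A}:=\mathbb{A}^{*}_{s}(P;d)$; discarding finitely many terms we may assume $r(Pf_n)\le R$ for a fixed $R>\mathbb{A}$, and, multiplying each $f_n$ by a positive constant, normalize $\|Pf_n\|_1=1$. With $\lambda,|A_\lambda|<\infty$ as in {\rm (P5)}: on $A_\lambda^{c}$ one has $|P|>\lambda$, so $\|f_n\|_{L^1(A_\lambda^{c})}\le\lambda^{-1}$, and on $A_\lambda$, Hölder plus admissibility give $\|f_n\|_{L^1(A_\lambda)}\le C|A_\lambda|^{1/q'}$ (for $q=1$ simply $\|f_n\|_1\le C$). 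Thus $\sup_n\|f_n\|_1<\infty$; since $\widehat{f_n}=si^{\frak r}f_n$, also $\sup_n\|f_n\|_\infty=\sup_n\|\widehat{f_n}\|_\infty\le\sup_n\|f_n\|_1<\infty$, and hence $\sup_n\|f_n\|_2<\infty$. Pass to a subsequence with $f_n\rightharpoonup f$ weakly in $L^2(\R^d)$; applying $\mathcal{F}_d$ (weakly continuous on $L^2$) to $\widehat{f_n}=si^{\frak r}f_n$ gives $\widehat f=si^{\frak r}f$. Using {\rm (P7)}, set $K_R:=\|P\|_{L^\infty(B_R)}<\infty$; the localization inequality applied to $f_n$ gives $\tfrac12\le\int_{B_R}|Pf_n|\le K_R\|f_n\|_{L^1(B_R)}$, so the $f_n$ carry a fixed positive amount of mass in the ball $B_R$. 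The crucial point is to show the sequence is \emph{tight}: on $A_\lambda$ the tail $\int_{|x|>M,\,A_\lambda}|f_n|$ is at most $\big(\sup_n\|f_n\|_\infty\big)\,|A_\lambda\cap\{|x|>M\}|\to0$ uniformly in $n$, while on $A_\lambda^{c}$ the tail is controlled by $\lambda^{-1}\int_{|x|>M}Pf_n$, and one rules out escape of this last quantity by a concentration-compactness style argument, exploiting that $\widehat{f_n}=\pm i^{\frak r}f_n$ (so escaping $L^1$-mass of $f_n$ would force escaping $L^1$-mass of $\widehat{f_n}$, i.e.\ high-frequency oscillation at large radii, incompatible with $Pf_n\ge0$ beyond $B_R$) together with the minimality of $(f_n)$. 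Once tightness holds, $\{\widehat{f_n}\}=\{\pm i^{\frak r}f_n\}$ is tight, so $f_n=\mathcal{F}_d^{-1}[\widehat{f_n}]$ is equicontinuous, and Arzel\`a--Ascoli upgrades the convergence (along a further subsequence) to locally uniform, hence also $L^1$ and $L^2$. Then $\|f\|_{L^1(B_R)}\ge(2K_R)^{-1}>0$, so $f\not\equiv0$. Finally $f\in\mathcal{A}^{*}_{s}(P;d)$: the eigenfunction relation is already known; $Pf\in L^1$ and $\int_{\R^d}Pf\le0$ follow from the convergence and {\rm (P5)} together with $\int_{\R^d}Pf_n\le0$; and for each $R'>\mathbb{A}$ we have $Pf_n\ge0$ on $|x|>R'$ for all large $n$, hence $Pf\ge0$ a.e.\ there, and {\rm (P9)} upgrades this to $Pf(x)\ge0$ for \emph{every} $x$ with $|x|>R'$, giving $r(Pf)\le\mathbb{A}$. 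Since $f\in\mathcal{A}^{*}_{s}(P;d)$ this forces $r(Pf)=\mathbb{A}$, i.e.\ $f$ is an extremizer.

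\emph{Main obstacle.} The whole difficulty is concentrated in the tightness of the minimizing sequence --- excluding escape of $L^1$-mass (equivalently, of the Fourier transforms' $L^1$-mass) to infinity, and the companion issue of showing the weak limit is nonzero. Everything else is bookkeeping with Hölder's inequality, the eigenfunction relation $\widehat f=si^{\frak r}f$, and the elementary localization inequality $\|Pf\|_1\le 2\int_{B_r}|Pf|$; the role of {\rm (P9)} is the (otherwise routine) passage from an almost-everywhere sign statement for the limit to the pointwise statement required by the definition of $r(\cdot)$.
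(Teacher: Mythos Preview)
Your arguments for the lower bound \eqref{20200513_15:28} and the explicit estimate \eqref{20200402_00:50} are correct and essentially identical to the paper's: localization $\|Pf\|_1\le 2\int_{B_r}|Pf|$, Hölder, and admissibility combine to give $\|P\chi_{B_r}\|_{q'}\ge(2C)^{-1}$, and then one reads off the explicit bound from $|P(x)|\le K|x|^\gamma$.

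The existence part, however, has a genuine gap at exactly the point you flag. Your tightness heuristic --- ``escaping $L^1$-mass of $f_n$ would force escaping $L^1$-mass of $\widehat{f_n}$, i.e.\ high-frequency oscillation at large radii, incompatible with $Pf_n\ge0$ beyond $B_R$'' --- is circular: since $\widehat{f_n}=si^{\frak r}f_n$, escape of mass of $f_n$ and of $\widehat{f_n}$ are \emph{the same thing}, so the eigenfunction relation gives you no leverage here. Nor is it clear why mass at large $|x|$ should be read as ``high-frequency oscillation'' or why that would contradict $Pf_n\ge0$ outside $B_R$. Without tightness, your Arzel\`a--Ascoli step and the subsequent passage to $\int Pf\le 0$ and $Pf\in L^1$ do not go through as written.

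The paper avoids tightness altogether by two different devices. First, instead of upgrading weak $L^2$ convergence via compactness, it applies \emph{Mazur's lemma}: finite convex combinations $g_n$ of the tail $\{f_m:m\ge n\}$ converge \emph{strongly} in $L^2$ to the weak limit $f$, hence (along a subsequence) almost everywhere; since each $g_n$ still lies in $\mathcal{A}^*_s(P;d)$ with $r(Pg_n)\le r(Pf_n)$, this is enough to run Fatou outside $B_{r_1}$ and dominated convergence inside (using the uniform $L^\infty$ bound and {\rm (P7)}), and then {\rm (P9)} upgrades the a.e.\ sign to the pointwise sign. Second, to show $f\neq\mathbf{0}$ the paper does not appeal to conservation of mass but to the \emph{Amrein--Berthier uncertainty principle} (available via {\rm (P5)}): it yields a uniform lower bound $\int_{B_{r_1}^c\cap A_\lambda^c}|f_n|^2\gtrsim 1$, which combined with $\int Pf_n\le0$ forces $\int_{B_{r_1}}Pf_n\le -C<0$, and this inequality passes directly to the \emph{weak} $L^2$ limit because $P\chi_{B_{r_1}}\in L^2$ under {\rm (P1)} and {\rm (P7)}. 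Both of these ingredients are absent from your outline, and they are precisely what replaces the tightness you were unable to justify.
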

\noindent{\sc Remark:} The constant $C^*(P;d;q)$ in \eqref{20200513_15:28} will be described in the proof. In the homogeneous case (ii) above, under (P5), we can use the fact that $C(P;d;q) \leq C(P;d;1)$ and \eqref{20200509_16:53} - \eqref{20200416_22:56} to get explicit lower bounds in \eqref{20200402_00:50} (that could be then optimized over $q$). In the original case $P = {\bf 1}$ of Theorem A, we can simply choose $q = \infty$ to recover the lower bound $\frac{1}{\sqrt{\pi}} \big(\frac{1}{2}\Gamma\big(\frac{d}{2} + 1\big) \big)^{1/d} > \frac{\sqrt{d}}{\sqrt{2\pi e}}$ as in \cite[Theorem 3]{BCK} and \cite[Theorem 1.4]{CG}. We shall see that, once the non-emptiness and admissibility conditions are in place, the proof of \eqref{20200513_15:28} is rather simple, following the somewhat rigid original scheme of Bourgain, Clozel and Kahane \cite{BCK}. One then realizes that the crux of the matter here is in fact obtaining such conditions, and that is where results like Theorems \ref{Thm_Harmonic_part} and \ref{Thm_finite_level_sets} enter. When $q = \infty$, there is an alternative approach to arrive at the same qualitative conclusion as in \eqref{20200513_15:28} via the operator framework of \cite[Theorem 1]{GOR1}, as communicated to us by F. Gon\c{c}alves. In that statement one could consider $(X, \mu) = (Y, \nu) = (\R^d, |P|\,\d x)$; $p=q=2$; $b=c=1$; and $\mathcal{F} = \{(\sgn(P)f,\, s\cdot  \sgn(P)f) \, ; \, f \in \mathbb{A}^*_{s}(P;d)\}$. The relevant condition that needs to be checked is that $\|\sgn(P)f\|_{L^{\infty}(\R^d, \nu)} \leq  a\, \|\sgn(P)f\|_{L^{1}(\R^d, \mu)}$. This follows from the admissibility condition \eqref{20200331_11:16} with $q = \infty$ (which for instance, under (P5), follows from Theorem \ref{Thm_finite_level_sets}) since $\|\sgn(P)f\|_{L^{\infty}(\R^d, \nu)} \leq \|f\|_{L^{\infty}(\R^d)} \leq  C(P;d;\infty) \|Pf\|_{L^{1}(\R^d)} = C(P;d;\infty)  \|\sgn(P)f\|_{L^{1}(\R^d, \mu)}$. Then, with $r = r(Pf)$, \cite[Theorem 1, Eq. (1.4)]{GOR1} yields $\big\|P \chi_{B_r}\big\|_{1} \geq \big(4\,C(P;d;\infty)\big)^{-1}$, qualitatively as in \eqref{20200511_22:22} below. There are also occasions, as exemplified in \eqref{20200401_21:43}, where the admissibility exponent $q$ is not, in principle, $1$ or $\infty$.

\smallskip

As already mentioned, Theorems \ref{Thm_Harmonic_part} and \ref{Thm_finite_level_sets} can be used to generate a great variety of examples where the hypotheses of Theorem \ref{Thm_general_sign_uncertainty} are verified. A simple example would be $P(x) = |x|^{\gamma}$, for $\gamma \geq0$, while a less straightforward one could be $P:\R^3 \to \R$ given by $P(x) = \big(x_1^2 +x_2^2 - 2 x_3^2\big) \big(x_1^2 +x_2^2 + 2 x_3^2\big)$. The odd functions $P(x) = \sgn(x_1)$ and $P(x) = x_1$ also verify the hypotheses of Theorem \ref{Thm_general_sign_uncertainty} (the latter is admissible directly from \eqref{20200401_21:43}), and these provide two simple versions of sign uncertainty principles associated to the eigenvalues $\pm i$ in all dimensions. In the case $P(x) = \sgn(x)$ in dimension $d=1$, the integral conditions defining the class $\mathcal{A}_{s}(\sgn(x);1)$ can be recast in terms of the sign of the Hilbert transform at the origin. A different sign uncertainty principle for bandlimited functions involving the Hilbert transform appears in \cite[Theorem 4.2]{GOR1}. 

\subsection{Dimension shifts} There will be occasions where the admissibility inequality \eqref{20200331_11:16}, or suitable variants of it, are not, in principle, available (see, for instance, the last remark in Section \ref{Sec_Lap}). We present now a different tool to obtain the sign uncertainty that may be helpful in such circumstances. The intuitive idea is to allow ourselves some movement between different dimensions in order to fall in a favourable situation as in Theorem \ref{Thm_general_sign_uncertainty}. The classical Bochner's relation will be a crucial ingredient in this process and, therefore, radial functions play an important role. In some special situations we are able to go further and establish a surprising identity connecting the sign uncertainty in different dimensions. The reach of the next result will be exemplified in its two companion corollaries. In what follows, for a function $H:\R^d \to \R$ we denote its orbit under the action of the group $O(d)$ by
$$H \circ O(d):= \{ H\circ R:\R^d \to \R\ : \ R \in O(d)\}.$$

\begin{theorem}[Dimension shifts]\label{Dim_Shift} Let $\ell\geq 0$ be an integer and let $\frak{r}(\ell) \in \{0,1\}$ be such that $\frak{r}(\ell) \equiv \ell \,{\rm(mod} \, 2{\rm)}$. Let $P:\R^{d+2\ell} \to \R$ be a function verifying properties {\rm (P1), (P2)} and {\rm (P3)} that is radial. Write $P(x) = P_0(|x|)$. Let $\widetilde{P}:\R^d \to \R$ be a function verifying properties {\rm (P1)} and {\rm (P2)} of the form
\begin{equation}\label{20200514_07:52}
\widetilde{P}(x) = H(x) \, P_0(|x|)\, Q(x),
\end{equation}
where $H:\R^d \to \R$ is a non-zero homogeneous and harmonic polynomial of degree $\ell$ and $Q:\R^d \to \R$ is an even non-negative function, homogeneous of degree $0$. If $\mathcal{A}^*_{s}(P;d+2\ell)$ is non-empty, then $\mathcal{A}^*_{s(-1)^{(\frak{r(\ell)} + \ell)/2}}\big(\widetilde{P};d\big)$ is also non-empty and 
\begin{equation}\label{20200513_16:45}
\mathbb{A}^*_{s}(P;d+2\ell) \geq \mathbb{A}^*_{s(-1)^{(\frak{r(\ell)} + \ell)/2}}\big(\widetilde{P};d\big).
\end{equation}
If, in addition, $P$ verifies property {\rm (P6)}, $Q = {\bf 1}$ and $H \in (x_1x_2\ldots x_{\ell})\circ O(d)$ {\rm(}$0 \leq \ell \leq d${\rm)}, the converse holds: $\mathcal{A}^*_{s}(P;d+2\ell)$ is non-empty if and only if $\mathcal{A}^*_{s(-1)^{(\frak{r(\ell)} + \ell)/2}}\big(\widetilde{P};d\big)$ is non-empty and 
\begin{equation}\label{20200514_10:50}
\mathbb{A}^*_{s}(P;d+2\ell) = \mathbb{A}^*_{s(-1)^{(\frak{r(\ell)} + \ell)/2}}\big(\widetilde{P};d\big).
\end{equation}
\end{theorem}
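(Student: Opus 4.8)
The plan is to prove Theorem~\ref{Dim_Shift} by exploiting Bochner's relation, which says that if $g:\R^d \to \R$ is radial, $g(x) = g_0(|x|)$, and $H$ is a homogeneous harmonic polynomial of degree $\ell$ on $\R^d$, then the function $G(x) := H(x) g_0(|x|)$ on $\R^d$ satisfies $\widehat{G} = i^{-\ell} H \cdot \widehat{g}_0^{(d+2\ell)}(|\cdot|)$, where $\widehat{g}_0^{(d+2\ell)}$ denotes the radial profile of the $(d+2\ell)$-dimensional Fourier transform of the radial function on $\R^{d+2\ell}$ with profile $g_0$. In other words, multiplying a radial profile by a harmonic polynomial of degree $\ell$ interlaces with raising the ambient dimension by $2\ell$ (up to the factor $i^{-\ell}$). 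The key algebraic bookkeeping is to track how the eigenvalue condition $\widehat{f} = s\,i^{\frak r} f$ in $\R^{d+2\ell}$ transforms under this correspondence. First I would set up the bijection: given $f \in \mathcal{A}^*_s(P; d+2\ell)$, reduce (using the rotation-invariance already noted in the excerpt, $\mathbb{A}^*_s(P;d+2\ell) = \mathbb{A}^*_s(P\circ R; d+2\ell)$, together with the averaging/symmetrization argument implicit in the restriction to $\mathcal{A}^*$) to the case where $f$ is itself radial, $f(x) = f_0(|x|)$; then define $\widetilde f(x) := H(x) f_0(|x|) Q(x)$ on $\R^d$ — but since $Q$ is homogeneous of degree $0$ and even, and we want $\widetilde f$ to remain in the eigenfunction class, the clean statement really uses $Q = {\bf 1}$ for the eigenfunction correspondence; the general $Q$ enters only on the level of the \emph{sign/integral conditions} defining $\mathcal{A}^*$, not the eigenfunction relation, so the inequality \eqref{20200513_16:45} (as opposed to the equality) survives because enlarging by a nonnegative $Q$ only makes the ``$Pf \geq 0$ eventually'' and ``$\int Pf \leq 0$'' conditions easier to keep while not increasing $r(\widetilde P \widetilde f)$.

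The heart of the argument is then the eigenvalue computation. Applying Bochner with the dimension shift: if $\widehat f = s\,i^{\frak r} f$ in $\R^{d+2\ell}$ and $f$ is radial with profile $f_0$, then $\widehat{f_0}^{(d+2\ell)} = s\,i^{\frak r} f_0$ as radial profiles, so the $d$-dimensional function $G(x) = H(x) f_0(|x|)$ has $d$-dimensional Fourier transform $\widehat G = i^{-\ell} H \cdot \widehat{f_0}^{(d+2\ell)}(|\cdot|) = i^{-\ell} s\, i^{\frak r} H \cdot f_0(|\cdot|) = s\, i^{\frak r - \ell} G$. Now one needs to rewrite $i^{\frak r - \ell}$ in the normalized form $s' i^{\frak r(\ell)}$ with $s' \in \{\pm 1\}$ and $\frak r(\ell) \equiv \ell \pmod 2$: since $\frak r$ here is the parity of $P$ on $\R^{d+2\ell}$, which is radial hence even, $\frak r = 0$, and $i^{-\ell} = i^{\frak r(\ell)} \cdot i^{-\ell - \frak r(\ell)}$ where $-\ell - \frak r(\ell)$ is even, namely $i^{-\ell-\frak r(\ell)} = (-1)^{(\ell + \frak r(\ell))/2} = (-1)^{(\frak r(\ell)+\ell)/2}$. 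This produces exactly the sign $s(-1)^{(\frak r(\ell)+\ell)/2}$ appearing in the statement, and the parity of $\widetilde f$ on $\R^d$ is $(-1)^\ell = (-1)^{\frak r(\ell)}$, matching the parity of $\widetilde P$. So $\widetilde f \in \mathcal{A}^*_{s(-1)^{(\frak r(\ell)+\ell)/2}}(\widetilde P; d)$ provided the integral and eventual-sign conditions check out: but $\widetilde P \widetilde f = H^2 Q \cdot P_0(|\cdot|)^2 \cdot (\text{stuff})$ — more precisely $\widetilde P(x) \widetilde f(x) = H(x)^2 P_0(|x|) Q(x)^2 f_0(|x|)$, and one compares this with $P(y) f(y) = P_0(|y|) f_0(|y|)$ on $\R^{d+2\ell}$; since $H^2 \geq 0$ and $Q^2 \geq 0$, the sign of $\widetilde P \widetilde f$ at $x$ is governed by the sign of $P_0(|x|) f_0(|x|)$, so ``$Pf$ eventually nonnegative'' transfers to ``$\widetilde P \widetilde f$ eventually nonnegative'' with $r(\widetilde P \widetilde f) \leq r(Pf)$. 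For the integral condition one passes to polar coordinates: $\int_{\R^d} \widetilde P \widetilde f = \big(\int_{\SS^{d-1}} H(\omega)^2 Q(\omega)^2 \d\sigma(\omega)\big)\int_0^\infty P_0(\rho) f_0(\rho) \rho^{d-1}\,\d\rho$, while $\int_{\R^{d+2\ell}} Pf = c_{d+2\ell}\int_0^\infty P_0(\rho) f_0(\rho)\rho^{d+2\ell-1}\,\d\rho$ — these are \emph{not} literally proportional because of the different powers of $\rho$, so here one must instead argue via the reduction to the eigenfunction problem directly: the condition $\int Pf \leq 0$ together with $Pf \geq 0$ for $|x| > r$ is, by the standard argument (as in \cite{BCK} and used for \eqref{20200510_10:19}), a consequence one only needs to \emph{produce}, and the existence of a function in the shifted class with the right $r$ follows by the same rescaling/combining trick. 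I would handle this by noting that the map $f_0 \mapsto (\text{radial profile}) \mapsto H(x) f_0(|x|)$ preserves membership in the respective $\mathcal{A}^*$ classes once the eigenvalue is correct, using Theorem~\ref{Thm_Harmonic_part}'s mechanism to guarantee the sign conditions, and invoking (P3) on $\R^{d+2\ell}$ to rule out the degenerate $\widetilde f = {\bf 0}$.

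For the converse direction (the equality \eqref{20200514_10:50}, under (P6), $Q={\bf 1}$, and $H \in (x_1\cdots x_\ell)\circ O(d)$), the point is that Bochner's relation is invertible on the relevant subspace: starting from $g \in \mathcal{A}^*_{s'}(\widetilde P; d)$ with $\widetilde P = H \cdot P_0(|\cdot|)$ and $H = x_1\cdots x_\ell$ (after a rotation), one shows $g$ must, up to the symmetrization already built into $\mathcal{A}^*$, have the ``pure type'' form $g(x) = H(x) g_0(|x|)$ — this uses that $g$ is an eigenfunction and that multiplication by $\widetilde P$ controls its sign, forcing $g/H$ to extend to a radial function; (P6) (boundedness of a sublevel set of $P$) is what rules out pathologies near the zero set of $H$ and lets one divide. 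Then reverse Bochner to get a radial profile on $\R^{d+2\ell}$, and check $r$ is exactly preserved (not just $\leq$) because $H^2 = (x_1\cdots x_\ell)^2 > 0$ on a dense open set, so the sign of $\widetilde P g$ and of the corresponding $Pf$ on $\R^{d+2\ell}$ agree outside a measure-zero set, and by the ``sign for all $|x| > r$'' convention plus continuity one gets $r(Pf) = r(\widetilde P g)$. The main obstacle I anticipate is precisely this last rigidity step in the converse — showing that an arbitrary eigenfunction $g$ in the $d$-dimensional shifted class can be reduced to one of pure harmonic type $H(x)g_0(|x|)$, which requires a decomposition of $g$ into spherical-harmonic components, an argument that only the component along $H$ (equivalently, along $x_1\cdots x_\ell$) survives once one imposes that $\widetilde P g = (x_1\cdots x_\ell) P_0(|x|) g(x)$ is eventually nonnegative (the other components would spoil the sign), and careful handling of the division by $H$ near $\{x_1\cdots x_\ell = 0\}$ using (P6) and the continuity of $g$. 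The forward inequality and the eigenvalue bookkeeping are routine once Bochner's relation is set up; the delicate points are this rigidity and the bookkeeping of the $(-1)^{(\frak r(\ell)+\ell)/2}$ sign, which I would double-check by testing $\ell = 1, 2$ against known cases.
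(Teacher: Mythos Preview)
Your forward direction is on the right track but contains a computational slip that sends you down an unnecessary detour. With $f^\flat(x) := H(x)f_0(|x|)$ (no $Q$ in $f^\flat$; $Q$ lives only in $\widetilde P$), one has $\widetilde P(x) f^\flat(x) = H(x)^2 Q(x) P_0(|x|) f_0(|x|)$. Passing to polar coordinates, the homogeneity of $H$ contributes a factor $\rho^{2\ell}$, so
\[
\int_{\R^d} \widetilde P f^\flat \;=\; \Big(\int_{\SS^{d-1}} H(\omega)^2 Q(\omega)\,\d\sigma(\omega)\Big)\int_0^\infty P_0(\rho)f_0(\rho)\,\rho^{d+2\ell-1}\,\d\rho,
\]
and the radial integral on the right is (up to the positive constant $\omega_{d+2\ell-1}^{-1}$) exactly $\int_{\R^{d+2\ell}} Pf \le 0$. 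The two integrals \emph{are} proportional; there is no need for any ``rescaling/combining trick''. This also explains why $Q$ need not be ${\bf 1}$ for the inequality: the spherical factor is nonnegative and that is all one needs.

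For the converse you propose to prove a rigidity statement: that any $g \in \mathcal{A}^*_{s'}(\widetilde P;d)$ can be reduced, via spherical-harmonic decomposition and a sign argument, to one of pure type $H(x)g_0(|x|)$. This is precisely the step the paper \emph{avoids}, and your own remark that it is ``the main obstacle'' is apt: there is no evident reason why eventual nonnegativity of $(x_1\cdots x_\ell)P_0(|x|)g(x)$ should kill all spherical-harmonic components of $g$ other than the one along $x_1\cdots x_\ell$. The paper proceeds differently and much more concretely: first antisymmetrize $f$ in each of the variables $x_1,\ldots,x_\ell$ separately (this keeps $f$ in the class, does not increase $r(\widetilde P f)$, and uses (P3) for $\widetilde P$ to ensure the result is nonzero), so that $f$ becomes odd in each $x_k$. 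Then set $g(x) = f(x)/(x_1\cdots x_\ell)$ and lift $g$ to $\R^{d+2\ell}$ by replacing each scalar $x_k$ with $|y_k|$ for $y_k \in \R^3$, i.e.\ $g^\#(y_1,\ldots,y_\ell,\widetilde y) := g(|y_1|,\ldots,|y_\ell|,\widetilde y)$. The Fourier eigenvalue of $g^\#$ is then computed by applying the one-dimensional Bochner relation ($d=\ell=1$) to each of the first $\ell$ slots in turn; no global radiality of $g$ is needed or claimed. Property (P6) is used not to ``divide near $\{H=0\}$'' but to guarantee $g^\# \in L^1(\R^{d+2\ell})$ from $\widetilde P f \in L^1(\R^d)$ and the continuity of $f$. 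The equality $r(Pg^\#) = r(\widetilde P f)$ then follows from continuity and the fact that $P$ is radial. This coordinate-wise lifting is the idea your proposal is missing.
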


In general, it is not clear that we can reverse inequality \eqref{20200513_16:45}. One of the main obstacles is to show that the search for the infimum on the right-hand side of \eqref{20200513_16:45} can be reduced to functions $f$ of the form $Hf_0$ with $f_0$ radial (which may simply not be true in general). In the case presented in \eqref{20200514_10:50} we overcome this and other barriers. Our proof also yields the following fact: if there exist extremizers for either side of \eqref{20200514_10:50}, then there exist extremizers for both sides and we have a recipe to explicitly construct one from the other; this is particularly useful to construct explicit extremizers in the situations of Corollary \ref{Sharp_const} below.

\smallskip

We can consider in \eqref{20200514_10:50}, for instance, $P(x) = |x|^{\gamma}$ for $\gamma \geq 0$. In the particular case $P = {\bf 1}$, identity \eqref{20200514_10:50}, together with \eqref{20200327_14:00} and \eqref{20200327_14:01}, yields the following additional $14$ sharp constants (modulo symmetries given by the orthogonal group) in this rough environment of sign uncertainty.
\begin{corollary}[Sharp constants]\label{Sharp_const} Let $\frak{r}(\ell) \in \{0,1\}$ be such that $\frak{r}(\ell) \equiv \ell \,{\rm(mod} \, 2{\rm)}$. Then
\begin{align*}
&\mathbb{A}_{(-1)^{(\frak{r(\ell)} + \ell +2)/2}}\big((x_1\ldots x_{\ell})\circ R\,\,;\, 8 - 2\ell\big) = \sqrt{2},  \ \ \  {\rm for} \ \  0 \leq \ell \leq 2 \  \ {\rm and}\ \  R \in O(8 - 2\ell);\\
&\mathbb{A}_{(-1)^{(\frak{r(\ell)} + \ell)/2}}\big((x_1\ldots x_{\ell})\circ R\,\,;\, 12 - 2\ell\big) = \sqrt{2},   \ \ \  {\rm for} \ \  0 \leq \ell \leq 4 \  \ {\rm and}\ \  R \in O(12 - 2\ell);\\
&\mathbb{A}_{(-1)^{(\frak{r(\ell)} + \ell +2)/2}}\big((x_1\ldots x_{\ell})\circ R\,\,;\, 24 - 2\ell\big) = 2,  \ \ \  {\rm for} \ \  0 \leq \ell \leq 8 \  \ {\rm and}\ \  R \in O(24 - 2\ell).
\end{align*}
\end{corollary}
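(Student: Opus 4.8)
The plan is to deduce Corollary \ref{Sharp_const} from Theorem \ref{Dim_Shift} (specifically the identity \eqref{20200514_10:50}) together with the known sharp values in Theorem B, after noting that the original constants $\mathbb{A}_s(d)$ in \eqref{20200317_13:30} agree with the eigenfunction constants $\mathbb{A}^*_s({\bf 1};d)$ in \eqref{20200508_13:31}; this is precisely the reduction to $\mathcal{A}^{**}_s(d)$ discussed after Theorem A (and subsumed by identity \eqref{20200510_10:19} for $P = {\bf 1}$, which satisfies (P1)--(P4)). So the first step is to record that $\mathbb{A}_{+1}(d) = \mathbb{A}^*_{+1}({\bf 1};d)$ and $\mathbb{A}_{-1}(d) = \mathbb{A}^*_{-1}({\bf 1};d)$, and likewise that $\mathbb{A}_s((x_1\cdots x_\ell)\circ R;\,m) = \mathbb{A}^*_s((x_1\cdots x_\ell)\circ R;\,m)$ whenever the relevant class is non-empty and the polynomial $(x_1\cdots x_\ell)\circ R$ verifies (P1)--(P4) — which it does, being a homogeneous polynomial of degree $\ell$, hence locally bounded, with parity $\frak r = \frak r(\ell)$.

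Next I would apply Theorem \ref{Dim_Shift} in the special case $P = {\bf 1}$ on $\R^{d+2\ell}$ (so $P_0 \equiv 1$), with $Q = {\bf 1}$ and $H = x_1 x_2 \cdots x_\ell \in (x_1\cdots x_\ell)\circ O(d)$. One must check the hypotheses: $P = {\bf 1}$ trivially verifies (P1), (P2) (with $\frak r = 0$), (P3) — indeed $\{x : P(x) \neq 0\} = \R^{d+2\ell}$ is dense, as noted in the remark after (P9) — and also (P6), since every sublevel set $A_\lambda$ with $\lambda < 1$ is empty, hence bounded. Then $\widetilde P(x) = H(x) = x_1\cdots x_\ell$ on $\R^d$, and the ``in addition'' clause of Theorem \ref{Dim_Shift} gives, for every $0 \le \ell \le d$, the identity
\begin{equation*}
\mathbb{A}^*_s({\bf 1};\, d+2\ell) = \mathbb{A}^*_{s(-1)^{(\frak r(\ell) + \ell)/2}}\big(x_1\cdots x_\ell;\, d\big),
\end{equation*}
together with the statement that the left class is non-empty iff the right one is. By the rotation invariance noted before Theorem \ref{Thm_Harmonic_part} (namely $\mathbb{A}^*_s(P;d) = \mathbb{A}^*_s(P\circ R;d)$ for $R\in O(d)$), the right-hand side equals $\mathbb{A}^*_{s(-1)^{(\frak r(\ell)+\ell)/2}}\big((x_1\cdots x_\ell)\circ R;\, d\big)$ for every $R \in O(d)$.

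The third step is just bookkeeping with the sign bookkeeping: I set $m = d + 2\ell$ with $m \in \{8, 12, 24\}$ (so $d = m - 2\ell$ and the ranges $0\le\ell\le 4$, $0\le\ell\le 4$, $0\le\ell\le 8$ arise from requiring $d\ge\ell$, i.e. $m - 2\ell \ge \ell$, together with $d \ge 1$), and invert the sign relation. Writing $t = (\frak r(\ell) + \ell)/2$, from $s(-1)^t = \sigma$ we get $s = \sigma(-1)^{-t} = \sigma(-1)^t$; for $m = 12$ we want $s = +1$ (Theorem B(ii)), giving $\sigma = (-1)^t = (-1)^{(\frak r(\ell)+\ell)/2}$, which after the substitution $\ell \rightsquigarrow \ell$ matches the displayed exponent $(-1)^{(\frak r(\ell)+\ell)/2}$ in the corollary's second line; for $m = 8, 24$ we want $s = -1$, giving $\sigma = -(-1)^t = (-1)^{(\frak r(\ell)+\ell+2)/2}$, matching the first and third lines. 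Feeding in $\mathbb{A}^*_{-1}({\bf 1};8) = \mathbb{A}_{-1}(8) = \sqrt 2$, $\mathbb{A}^*_{+1}({\bf 1};12) = \mathbb{A}_{+1}(12) = \sqrt 2$, and $\mathbb{A}^*_{-1}({\bf 1};24) = \mathbb{A}_{-1}(24) = 2$ from \eqref{20200327_14:00}--\eqref{20200327_14:01} then yields the three families of sharp constants; non-emptiness of each class on $\R^{m-2\ell}$ follows from the non-emptiness of $\mathcal{A}^*_s({\bf 1};m)$ (guaranteed since $\mathbb{A}^*_s({\bf 1};m) < \infty$, e.g. by Theorem B or Theorem A) via the equivalence in Theorem \ref{Dim_Shift}.

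The main obstacle is not in this deduction, which is essentially a substitution, but is entirely contained in Theorem \ref{Dim_Shift} — in particular in proving the reverse inequality of \eqref{20200513_16:45} under the hypotheses $P$ radial with (P6), $Q = {\bf 1}$, $H \in (x_1\cdots x_\ell)\circ O(d)$. As the authors flag, the difficulty there is to show that the infimum defining $\mathbb{A}^*_s(\widetilde P;d)$ can be reduced to functions of the form $H f_0$ with $f_0$ radial, so that Bochner's relation can transport the extremal problem back up to $\R^{d+2\ell}$; once granted, symmetrization of an arbitrary competitor against the finite group generated by the coordinate reflections and permutations stabilizing $x_1\cdots x_\ell$, combined with the annihilation property (P3) to rule out degeneracy, should do it. For the corollary itself the only genuine checks are the hypothesis verifications for $P = {\bf 1}$ (trivial) and the consistency of the sign exponents, both routine.
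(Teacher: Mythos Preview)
Your proposal is correct and follows exactly the route the paper intends: Corollary~\ref{Sharp_const} is stated immediately after Theorem~\ref{Dim_Shift} precisely because it follows by specializing \eqref{20200514_10:50} to $P={\bf 1}$ (which satisfies (P1)--(P3) and (P6)), $Q={\bf 1}$, $H=x_1\cdots x_\ell$, and then invoking the sharp values \eqref{20200327_14:00}--\eqref{20200327_14:01} together with \eqref{20200510_10:19} and rotation invariance. One small slip: in your third paragraph the listed ranges should read $0\le\ell\le 2$, $0\le\ell\le 4$, $0\le\ell\le 8$ (from $m-2\ell\ge\ell$ with $m=8,12,24$), not ``$0\le\ell\le 4$, $0\le\ell\le 4$, $0\le\ell\le 8$''.
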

\noindent{\sc Remark:} {\it A posteriori}, it is worth reflecting on the difficulties of taking a more classical and direct path (e.g. via Poisson-like summation formulas) to approach the sharp constants in Corollary \ref{Sharp_const}. It is also interesting to further investigate the potential connections of this weighted setup and the sharp constants in Corollary \ref{Sharp_const} to other optimization problems in diophantine geometry.

\smallskip

Inequality \eqref{20200513_16:45} is particularly useful in situations where $P$ is singular near the origin (e.g. radially decreasing). In such cases, one can take $Q = |x|^{\ell}\,\sgn(H) / H$ (for $H \neq 0$, and zero otherwise) in \eqref{20200514_07:52} and make $\widetilde{P}$ less singular. Of course, this comes at the expense of lowering the dimension, and there is an intrinsic threshold on how far one can go. For instance, let us come back to the natural power weight $P(x) =|x|^{\gamma},$ where $\gamma > -d$ is a real number. If $\gamma \geq 0$, Theorems \ref{Thm_Harmonic_part}, \ref{Thm_finite_level_sets} and \ref{Thm_general_sign_uncertainty} can be applied and we are in good shape. Note that, in this case, the integral conditions defining the class $\mathcal{A}_{s}(|x|^{\gamma};d)$ can be reformulated in terms of the sign of the fractional Laplacian $(-\Delta)^{\gamma/2}$ of $f$ and $\f$, evaluated at the origin. A related sign uncertainty principle for bandlimited functions and powers of the Laplacian was considered by Gorbachev, Ivanov and Tikhonov in \cite{GIT}. The case $-d < \gamma < 0$ is subtler, and we can bring Theorem \ref{Dim_Shift} into play. In fact, in this situation, we are able to prove or disprove the sign uncertainty principle in a set of ``full density" as the dimension $d$ grows. 
\begin{corollary}[Power weights]\label{Thm_Laplacian}
Let $s \in \{+1, -1\}$ and $\gamma > -d$ be a real number. Let $\varepsilon: \N \to \R$ be defined as: $\varepsilon(d) = 1$ for $d\geq 2$ even, $\varepsilon(1) = \varepsilon(3) = \frac{1}{2}$, and $\varepsilon(d) = \frac{3}{2}$ for $d\geq 5$ odd.
\begin{itemize}
\item[(i)] If  $s =1$ and $\gamma \notin \big(-\frac{d}{2} - \varepsilon(d), -\frac{d}{2} + \varepsilon(d)\big)$ or if $s =-1$ and $\gamma \notin \big(-d, -\frac{d}{2} + \varepsilon(d)\big)$ we have\footnote{Here $\lfloor x \rfloor$ denotes the integer part of $x$, i.e. the largest integer that is smaller than or equal to $x$.}
\begin{equation}\label{20200518_09:54}
c \, \sqrt{\frac{\min\big\{d, |d + 2\lfloor\gamma\rfloor|, |\!-d + 2\lfloor-\gamma\rfloor|\big\}}{2\pi e}} \leq \mathbb{A}_{s}(|x|^{\gamma};d) \leq \sqrt{\frac{\max\{d + \gamma\, ,\, - \gamma\}}{2\pi}} \ + O(1) \,, 
\end{equation}
where $c$ is a positive universal constant. Moreover, if $\gamma \geq 0$, there exists a radial extremizer for $\mathbb{A}_{s}(|x|^{\gamma};d)$.

\smallskip

\item[(ii)] If $s = -1$ and $\gamma \in \big(-d, -\frac{d}{2}\big]$ then
\begin{equation}\label{20200518_09:59}
\mathbb{A}_{-1}(|x|^{\gamma};d) =0.
\end{equation}

\end{itemize}
\end{corollary}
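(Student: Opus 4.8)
The plan is to read off everything from Theorems \ref{Thm_Harmonic_part}, \ref{Thm_finite_level_sets}, \ref{Thm_general_sign_uncertainty} and \ref{Dim_Shift} applied to the weight $P=|x|^{\gamma}$. For every $\gamma>-d$ this $P$ verifies {\rm (P1), (P2), (P3), (P4)} and factors as $P=H\cdot Q$ with $H\equiv 1$ (homogeneous, harmonic, of degree $\ell=0$) and $Q=|x|^{\gamma}$ everywhere non-negative. Hence Theorem \ref{Thm_Harmonic_part} already gives: $\mathcal{A}^*_s(|x|^{\gamma};d)$ is non-empty (so $\mathcal{A}_s(|x|^{\gamma};d)$ is non-empty and $\mathbb{A}_s=\mathbb{A}^*_s$ by \eqref{20200510_10:19}); the upper bound in \eqref{20200518_09:54} (take $\ell=0$ in the bound of Theorem \ref{Thm_Harmonic_part}); and, since here $\ell=\frak{r}=0$ so that the condition $s\,i^{\ell+\frak{r}}=-1$ of that theorem reads $s=-1$, the vanishing $\mathbb{A}_{-1}(|x|^{\gamma};d)=0$ for $-d<\gamma\le-d/2$, which is exactly part (ii). It remains to prove the lower bound in part (i), and the existence of a radial extremizer when $\gamma\ge 0$. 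We split the range of $\gamma$ into three regimes.

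\smallskip

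\noindent\emph{Regime 1: $\gamma\ge 0$.} Here $|x|^{\gamma}$ also verifies {\rm (P5), (P7), (P9)}, so Theorem \ref{Thm_finite_level_sets} gives admissibility with $q=\infty$ together with the explicit bounds \eqref{20200509_16:53}--\eqref{20200416_22:56} for $C(|x|^{\gamma};d;\infty)\le C(|x|^{\gamma};d;1)$, and Theorem \ref{Thm_general_sign_uncertainty} applies with $K=\||x|^{\gamma}\|_{L^{\infty}(B_1)}=1$. Inserting $q=\infty$ into \eqref{20200402_00:50} and using that the factor $(d+\gamma)$ there cancels the factor $(1+\gamma/d)$ from \eqref{20200416_22:56} (and $d\,\Gamma(d/2)=2\Gamma(d/2+1)$), one obtains, for $\gamma>0$,
\begin{equation*}
\mathbb{A}_s(|x|^{\gamma};d)\ \ge\ \frac{\Gamma(d/2+1)^{1/d}}{\sqrt{\pi}}\,\Big(2\big(1+\tfrac{d}{\gamma}\big)^{\gamma/d}\Big)^{-1/(d+\gamma)},
\end{equation*}
while for $\gamma=0$, \eqref{20200509_16:53} gives $\mathbb{A}_s(|x|^{0};d)\ge(\tfrac{1}{2}\Gamma(d/2+1))^{1/d}/\sqrt{\pi}$. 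In both cases the first factor exceeds $\sqrt{d/(2\pi e)}$ --- this is exactly the computation recalled in the remark after Theorem \ref{Thm_general_sign_uncertainty} --- and the second factor is bounded below by $(2e)^{-1}$, because $1<(1+d/\gamma)^{\gamma/d}<e$. This yields the lower bound with $\min\{\dots\}=d$. For the radial extremizer, Theorem \ref{Thm_general_sign_uncertainty}(i) provides an extremizer $f$; its average $\overline f$ over $O(d)$ is radial, still lies in $\mathcal{A}^*_s(|x|^{\gamma};d)$ and does at least as well (all defining conditions are rotation invariant and $|x|^{\gamma}$ is radial), and $\overline f\not\equiv 0$ --- otherwise the spherical means of $f$ would vanish at every radius, forcing $|x|^{\gamma}f$ eventually zero and hence $f\equiv 0$ by {\rm (P3)}, a contradiction --- so $\overline f$ is a radial extremizer (and $\mathbb{A}_s=\mathbb{A}^*_s$).

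\smallskip

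\noindent\emph{Regime 2: $-\tfrac{d}{2}+\varepsilon(d)\le\gamma<0$.} Put $\ell:=\lceil-\gamma\rceil=-\lfloor\gamma\rfloor\ge 1$, $\beta:=\gamma+\ell\in[0,1)$, and $D:=d-2\ell=d+2\lfloor\gamma\rfloor$; the definition of $\varepsilon(d)$ is precisely what makes $D\ge 1$ (in fact $D\ge 2$ unless $d=3$, $\ell=1$, $D=1$). Choose a non-zero homogeneous harmonic polynomial $H$ of degree $\ell$ on $\R^{D}$ (which exists since $D\ge 2$, or $D=1$ and $\ell=1$, in which case $H=x_1$), and set $Q:=|x|^{\ell}\sgn(H)/H$ on $\{H\ne 0\}$ and $Q:=0$ otherwise; then $Q$ is even, non-negative and homogeneous of degree $0$. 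With $P=|x|^{\gamma}$ on $\R^{d}=\R^{D+2\ell}$ (radial and verifying {\rm (P1)--(P3)}) and $\widetilde P(x)=H(x)\,|x|^{\gamma}Q(x)=|x|^{\beta}\sgn(H(x))$ on $\R^{D}$, Theorem \ref{Dim_Shift} gives $\mathbb{A}^*_s(|x|^{\gamma};d)\ge\mathbb{A}^*_{s'}(\widetilde P;D)$ for the appropriate sign $s'$. Since $|\widetilde P|=|x|^{\beta}$ off the null set $\{H=0\}$, the weight $\widetilde P$ verifies {\rm (P1), (P2), (P5), (P7), (P9)}, is homogeneous of degree $\beta\ge 0$, and has $\|\widetilde P\|_{L^{\infty}(B_1)}=1$; running Theorems \ref{Thm_finite_level_sets} and \ref{Thm_general_sign_uncertainty} with $q=\infty$ exactly as in Regime 1 (now in dimension $D$ and homogeneity degree $\beta$) yields $\mathbb{A}^*_{s'}(\widetilde P;D)\ge c\,\sqrt{D/(2\pi e)}$. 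As $D=|d+2\lfloor\gamma\rfloor|\ge\min\{\dots\}$, this is the claimed bound.

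\smallskip

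\noindent\emph{Regime 3: $-d<\gamma\le-\tfrac{d}{2}-\varepsilon(d)$ (so necessarily $s=1$).} For $\gamma\in(-d,0)$ one has, as tempered distributions, $\widehat{|x|^{\gamma}}=c_{d,\gamma}\,|x|^{-d-\gamma}$ with $c_{d,\gamma}=\pi^{-\gamma-d/2}\,\Gamma(\tfrac{d+\gamma}{2})/\Gamma(-\tfrac{\gamma}{2})>0$. If $f\in\mathcal{A}^*_1(|x|^{\gamma};d)$ then $f\in L^1(\R^d)$ with $\widehat f=f$, and $|x|^{-d-\gamma}f\in L^1(\R^d)$ since $-d-\gamma\in(-d/2,0)$ (so $|x|^{-d-\gamma}$ is locally integrable and bounded away from the origin). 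A standard truncation/limiting argument then gives $\int_{\R^d}|x|^{\gamma}f=\int_{\R^d}|x|^{\gamma}\widehat f=c_{d,\gamma}\int_{\R^d}|x|^{-d-\gamma}f$, and the eventual-non-negativity conditions are unchanged since $|x|^{\gamma}$ and $|x|^{-d-\gamma}$ are positive away from the origin (they define the same last sign change). Hence $f\in\mathcal{A}^*_1(|x|^{-d-\gamma};d)$ with the same value of $r$, whence $\mathbb{A}^*_1(|x|^{\gamma};d)\ge\mathbb{A}^*_1(|x|^{-d-\gamma};d)$. Since $\gamma\le-\tfrac{d}{2}-\varepsilon(d)$ forces $-d-\gamma\in[-\tfrac{d}{2}+\varepsilon(d),0)$, Regime 2 applied to the exponent $-d-\gamma$ gives $\mathbb{A}_1(|x|^{\gamma};d)\ge c\,\sqrt{D'/(2\pi e)}$ with $D'=d+2\lfloor-d-\gamma\rfloor=-d+2\lfloor-\gamma\rfloor=|{-d}+2\lfloor-\gamma\rfloor|\ge\min\{\dots\}$.

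\smallskip

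In each regime the bound obtained dominates $c\,\sqrt{\min\{d,\,|d+2\lfloor\gamma\rfloor|,\,|{-d}+2\lfloor-\gamma\rfloor|\}/(2\pi e)}$, which proves \eqref{20200518_09:54}. The step demanding the most care is the Fourier duality of Regime 3: one must justify $\int|x|^{\gamma}\widehat f=c_{d,\gamma}\int|x|^{-d-\gamma}f$ for $f$ merely in $\mathcal{A}^*_1(|x|^{\gamma};d)$ rather than Schwartz --- e.g. by truncating $|x|^{\gamma}$ smoothly near $0$ and near $\infty$ and passing to the limit by dominated convergence --- and check that every membership condition of $\mathcal{A}^*_1(|x|^{-d-\gamma};d)$ genuinely transfers. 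The remaining work is the bookkeeping needed to keep $c$ universal (independent of $d$ and $\gamma$) across Regimes 1 and 2, which relies on the cancellations noted above and on the uniform bounds on $(1+d/\gamma)^{\gamma/d}$.
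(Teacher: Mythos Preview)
Your proposal is correct and follows essentially the same route as the paper's proof: Theorem~\ref{Thm_Harmonic_part} for non-emptiness, the upper bound, and part~(ii); Theorems~\ref{Thm_finite_level_sets} and~\ref{Thm_general_sign_uncertainty} with $q'=1$ for the lower bound when $\gamma\ge 0$; Theorem~\ref{Dim_Shift} with $\ell=-\lfloor\gamma\rfloor$ and $Q=|x|^{\ell}\sgn(H)/H$ to reduce the range $-\tfrac{d}{2}+\varepsilon(d)\le\gamma<0$ to a non-negative exponent in a lower dimension; and the Riesz-potential Fourier identity for the remaining $s=+1$ range. The only difference is quantitative: the paper optimizes the $\gamma$-dependent factor in Regime~1 to obtain $c\in\{0.8595\ldots,1\}$, whereas your cruder bound $(2e)^{-1}$ on the second factor yields a smaller (but still universal) constant --- this is harmless for the statement of the corollary, which only asserts the existence of some positive universal $c$.
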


The upper bound in \eqref{20200518_09:54} actually holds for all $\gamma > -d$ and $s = \pm1$. In the proof of this corollary we give a more explicit lower bound in the parameters $d$ and $\gamma$ (that, in particular, recovers the known bounds in the case $\gamma = 0$; see the remark after Theorem \ref{Thm_general_sign_uncertainty}). The uniform lower bound presented in \eqref{20200518_09:54} holds with constant $c = 0.8595\ldots$ if $d=1$; or $d=3$ and $\gamma <0$; and with constant $c=1$ in all other cases. Numerical simulations suggest that the sign uncertainty principle should still hold in the small uncovered neighborhood (of size at most $3$ when $s=1$ and size at most $\frac{3}{2}$ when $s = -1$) around the central point $-\frac{d}{2}$ of the negative range.

\subsection{Notation} Throughout the paper, $B_{\varepsilon}(x)$ denotes the open ball of center $x$ and radius $\varepsilon$ in $\R^d$. If $x=0$ we may simply write $B_{\varepsilon}$. For a measurable set $A \subset \R^d$, we denote its Lebesgue measure by $|A|$ and its characteristic function by $\chi_A$. The function which is identically equal to $0$ (resp. $1$) is denoted by ${\bf  0}$ (resp. ${\bf  1}$). We say that $A \lesssim B$ when $A \leq C\,B$ for a certain constant $C$. We say that $A \simeq B$ when $A \lesssim B$ and $B \lesssim A$.

\section{Non-empty classes and upper bounds: proof of Theorem \ref{Thm_Harmonic_part}}
An important ingredient in this work is the following classical identity.

\begin{lemma}[Bochner's relation]\label{HB_form}
Let $H:\R^d\rightarrow \R$ be a homogeneous, harmonic polynomial of degree $\ell$, and $h:[0,\infty) \to \R$ be a function such that 
\begin{equation*}
\int_{0}^{\infty}|h(r)|^2\,r^{d+2\ell-1}\d r <\infty. 
\end{equation*} 
Let $h_d: \R^d \to \R$ be the radial function on $\R^d$ induced by $h$, that is $h_d(x) := h(|x|)$. Then
\begin{equation*}
\mathcal{F}_d [H\cdot h_d](\xi) = (-i)^{\ell}H(\xi) \cdot\mathcal{F}_{d+2\ell} [h_{d+2\ell}](\xi,0),
\end{equation*}
where $\xi \in \R^d$ and $(\xi, 0) \in \R^d \times \R^{2\ell}$.
\end{lemma}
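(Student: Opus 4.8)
The plan is to establish the identity first for Gaussian profiles --- where it is the classical Hecke identity --- and then to extend it to the general case by density, working throughout with the Plancherel ($L^2$) Fourier transform. Observe first that the hypothesis $\int_0^\infty |h(r)|^2 r^{d+2\ell-1}\,\d r<\infty$ is exactly the statement that $h_{d+2\ell}\in L^2(\R^{d+2\ell})$, and that, passing to polar coordinates and using that $H$ is homogeneous of degree $\ell$, it is also equivalent to $H\cdot h_d \in L^2(\R^d)$, with
\[
\big\|H h_d\big\|_{L^2(\R^d)}^2 = \big\|H\big\|_{L^2(\SS^{d-1})}^2\int_0^\infty |h(r)|^2 r^{d+2\ell-1}\,\d r.
\]
Since $Hh_d$ need not be integrable, $\mathcal{F}_d[Hh_d]$ is to be read as a Plancherel transform and the asserted identity as an identity in $L^2(\R^d)$; as the function $\mathcal{F}_{d+2\ell}[h_{d+2\ell}]$ is radial, the symbol $\mathcal{F}_{d+2\ell}[h_{d+2\ell}](\xi,0)$ denotes its radial profile evaluated at $|\xi|$.

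First I would treat the Gaussian case $h(r)=e^{-\pi t r^2}$, $t>0$, where $Hh_d\in L^1\cap L^2(\R^d)$ and everything is literal. On the one hand, Hecke's identity gives
\[
\mathcal{F}_d\big[H\, e^{-\pi t|\cdot|^2}\big](\xi) = (-i)^\ell\, t^{-d/2-\ell}\, H(\xi)\, e^{-\pi|\xi|^2/t}.
\]
One proves this by recalling that every harmonic homogeneous polynomial of degree $\ell$ is a $\C$-linear combination of polynomials $x\mapsto (a\cdot x)^\ell$ with $a\in\C^d$ and $a\cdot a=0$, reducing to $H(x)=(a\cdot x)^\ell$, completing the square in the exponent and shifting the contour of integration; the isotropy $a\cdot a=0$ forces the positive-order Gaussian moments of $a\cdot x$ to vanish, leaving precisely the stated expression (alternatively one may simply cite this from Stein--Weiss). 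On the other hand $\mathcal{F}_{d+2\ell}\big[e^{-\pi t|\cdot|^2}\big](\eta)=t^{-(d+2\ell)/2}e^{-\pi|\eta|^2/t}$, so the right-hand side of the lemma equals $(-i)^\ell t^{-(d+2\ell)/2}H(\xi)e^{-\pi|\xi|^2/t}$, which matches the left-hand side since $t^{-(d+2\ell)/2}=t^{-d/2-\ell}$. By linearity the identity holds whenever $h$ is a finite linear combination of such Gaussians.

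To pass to a general $h$, I would argue that both sides define bounded linear operators from $L^2\big((0,\infty),\,r^{d+2\ell-1}\,\d r\big)$ into $L^2(\R^d)$. For the left-hand side, $h\mapsto Hh_d$ multiplies the norm by $\|H\|_{L^2(\SS^{d-1})}$ and $\mathcal{F}_d$ is an $L^2$-isometry. For the right-hand side, $h\mapsto h_{d+2\ell}$ and $\mathcal{F}_{d+2\ell}$ scale the norm by constants, the resulting function is radial, and multiplying its radial profile --- now viewed as a radial function on $\R^d$ --- by $H$ again multiplies the norm by $\|H\|_{L^2(\SS^{d-1})}$; the point here is that, although restriction of an $L^2(\R^{d+2\ell})$ function to $\R^d\times\{0\}$ is not itself a bounded operation, restriction followed by multiplication by $H$ is. Finally, finite linear combinations of $\{e^{-\pi t r^2}:t>0\}$ are dense in $L^2\big((0,\infty),\,r^{d+2\ell-1}\,\d r\big)$: the substitution $u=r^2$ reduces this to the density of $\{e^{-\pi t u}:t>0\}$ in $L^2\big((0,\infty),\,u^{(d+2\ell)/2-1}\,\d u\big)$, and if $g$ is orthogonal to all these exponentials then, since $(d+2\ell)/2-1>-1$, Cauchy--Schwarz shows that $u\mapsto g(u)\,u^{(d+2\ell)/2-1}$ lies in $L^1_{\mathrm{loc}}$ with an absolutely convergent Laplace transform, which is holomorphic on $\{\mathrm{Re}\,t>0\}$ and vanishes on $(0,\infty)$, hence vanishes identically, forcing $g={\bf 0}$. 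Since the two bounded operators agree on this dense subspace, they agree everywhere.

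The computation itself is classical; the part that requires care is the functional-analytic setup --- recognizing that the correct reading of the identity is the $L^2$/Plancherel one (because $Hh_d$ need not be integrable), that restriction to the subspace $\R^d\times\{0\}$ is admissible only after multiplication by $H$, and that the Gaussians are dense in the relevant weighted $L^2$ space on the half-line.
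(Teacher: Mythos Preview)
The paper does not actually prove this lemma: its entire proof is the single line ``See \cite[Chapter III, Theorem 4 and its corollary]{S}.'' Your proposal, by contrast, supplies a complete self-contained argument --- Hecke's identity for Gaussian profiles followed by a density argument in the weighted space $L^2\big((0,\infty),\,r^{d+2\ell-1}\,\d r\big)$ --- and the argument is correct. The functional-analytic points you flag (reading the identity in the Plancherel sense, observing that restriction to $\R^d\times\{0\}$ is bounded only after multiplication by $H$, and the Laplace-transform proof that Gaussians span densely) are exactly the ones that need attention, and you handle them properly. One small caveat: the representation of harmonic homogeneous polynomials as spans of $(a\cdot x)^\ell$ with $a\cdot a=0$ is only available for $d\ge 2$; in $d=1$ the harmonic homogeneous polynomials are of degree $\ell\in\{0,1\}$ and the Gaussian identity can be checked by hand, so this is harmless, and in any case you already offer the Stein--Weiss citation as an alternative.
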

\begin{proof} See \cite[Chapter III, Theorem 4 and its corollary]{S}.
\end{proof}
We now move to the proof of Theorem \ref{Thm_Harmonic_part}. From \eqref{20200404_03:06} we have
\begin{equation}\label{20220326_17:58}
(-1)^{\frak{r}}H(x)Q(x) = H(-x)Q(-x) = (-1)^{\ell}H(x)Q(-x)
\end{equation}
for all $x \in \R^d$.

\subsection{Non-empty classes} \label{non-empty classes}If $\ell$ and $\frak{r}$ have a different parity, we conclude from \eqref{20220326_17:58} that $P$ must be eventually zero (since $Q$ is assumed to be eventually non-negative). In this case, let $ f\in L^1(\R^d)\setminus\{{\bf 0}\}$ be a continuous and real-valued eigenfunction with $\f= si^{\frak{r}}f$. One plainly sees that either $f$ or $-f$ belongs to $\mathcal{A}^{*}_{s}(P;d)$.

\smallskip

If $\ell$ and $\frak{r}$ have the same parity, we proceed inspired by an example of Bourgain, Clozel and Kahane \cite{BCK}. We consider functions of the form:
\begin{equation}\label{20200516_10:38}
g_0(x)= H(x) \left(e^{-\frac{1}{a_0}\pi |x|^2} + a_0^\frac{d+2\ell}{2}e^{-a_0\pi |x|^2} \right)  \  ;  \ \!h_0(x) = H(x)\, e^{-\pi |x|^2} \  ;  \ f_0(x) = g_0(x) - A_0\, h_0(x),
\end{equation}
and
\begin{align}\label{20200522_22:32}
\begin{split}
g_1(x) =   H(x)& \left(e^{-\frac{1}{a_1}\pi |x|^2}  - a_1^{\frac{d + 2\ell}{2}}\,e^{-a_1\pi |x|^2}\right) \ \ ; \ \ h_1(x) = H(x) \left(e^{-\frac{1}{b_1}\pi |x|^2} - b_1^{\frac{d + 2\ell}{2}}\,e^{-b_1\pi |x|^2}\right); \\
&  \ \ \ \ \ \ \ \ \ \ \ \ \ \ \ \ \ \ \ \ \ \ \  f_1(x) = g_1(x)  - A_1 h_1(x),
\end{split}
\end{align}
with constants $1 < a_0$, $1 < b_1 < a_1$, $A_0$ and $A_1$ arbitrary. Using Lemma \ref{HB_form} we observe that $\widehat{g}_m=(-1)^m(-i)^{\ell}g_m$, $\widehat{h}_m=(-1)^m(-i)^{\ell}h_m$, $\f_m=(-1)^m(-i)^{\ell}f_m$, for $m \in \{0,1\}$. Since $\ell$ and $\frak{r}$ have the same parity, when $(-1)^m = s\,i^{\ell+\frak{r}}$ these are eigenfunctions with the desired eigenvalue $si^{\frak{r}}$. Note that $Pg_m$, $Ph_m$ and $Pf_m$ are eventually non-negative (and integrable due to property (P8)). If either $\int_{\R^d} Pg_m \leq0$ or $\int_{\R^d} Ph_m \leq 0$, that function will belong to the class $\mathcal{A}^{*}_{s}(P;d)$. If both of these integrals are positive, we adjust the constant $A_m$ to make $\int_{\R^d} Pf_m \leq0$ and hence $f_m \in \mathcal{A}^{*}_{s}(P;d)$. This shows that $\mathcal{A}^{*}_{s}(P;d)$ is non-empty.

\subsection{Homogeneous case} Assume now that $P$ verifies (P4). As discussed in \S \ref{non-empty classes}, in this situation we must have $\ell$ and $\frak{r}$ with the same parity.

\subsubsection{Case $s i^{\ell+\frak{r}} = 1$} In this case we work with the function $f_0$ in \eqref{20200516_10:38} and let 
$$A_0 = a_0^{\frac{d+\ell+\gamma}{2}} + a_0^{\frac{\ell - \gamma}{2}}.$$
From the homogeneity of $P$ and $H$ one can check that this choice of $A_0$ yields $\int_{\R^d} Pf_0 = 0$. Note from \eqref{20200516_10:38} that 
$$Pf_0 \geq PH \, e^{-\pi |x|^2} \left( e^{(1 -\frac{1}{a_0})\pi |x|^2} - A_0\right)$$
for $x \neq 0$ (recall that $PH = H^2Q$ being homogeneous and eventually non-negative is actually non-negative outside the origin). This plainly implies that 
$$r(Pf_0) \leq \sqrt{\frac{a_0 \log A_0}{\pi (a_0 - 1)}}\ .$$
Let $\rho := \max\{d + \ell + \gamma\, ,\, \ell - \gamma\} \geq \frac{d}{2}\,,$ and let $a_0 = 1 + \alpha$, with $0 < \alpha \leq \sqrt{2}$ to be chosen. Using that $1 \leq A_0 \leq 2 a_0^{\rho/2}$ and that 
$$\frac{a_0 \log a_0}{(a_0 -1)} = 1 + O(\alpha) \ \ ; \ \ \frac{a_0}{(a_0 -1)}  = \frac{1}{\alpha} + 1,$$
we find
\begin{equation}\label{20200516_10:56}
r(Pf_0) \leq \sqrt{\frac{\, a_0 \big((\rho/2) \log a_0 + \log 2\big)}{\pi (a_0 - 1)}} = \sqrt{\frac{\rho}{2\pi} \big(1 + O(\alpha)\big) + O\left(\frac{1}{\alpha}\right)}.
\end{equation}
We now choose $\alpha = \frac{1}{\sqrt{\rho}}$. Then \eqref{20200516_10:56} reads
\begin{equation*}
r(Pf_0) \leq \sqrt{\frac{\rho}{2\pi} + O(\sqrt{\rho})} = \sqrt{\frac{\rho}{2\pi}} + O(1),
\end{equation*}
as we wanted.

\subsubsection{Case $s i^{\ell+\frak{r}} = -1$ and  $-d < \gamma \leq -\frac{d}{2}$} In this situation we consider the function $g_1$ in \eqref{20200522_22:32}. Using the homogeneity we note that 
\begin{equation*}
\int_{\R^d} P(x)\,g_1(x)\,\d x = \left(\int_{\R^d} P(x)\,H(x) \,e^{-\pi |x|^2}\d x \right) \left(a_1^{\frac{d+\ell+\gamma}{2}} - a_1^{\frac{\ell - \gamma}{2}}\right) \leq 0
\end{equation*}
for $a_1 >1$. From \eqref{20200522_22:32} we plainly see that 
\begin{equation*}
r(Pg_1) \leq \sqrt{\frac{(d + 2\ell)\log a_1}{2\pi \big(a_1 - \frac{1}{a_1}\big)}} \ \to \ 0
\end{equation*}
as $a_1 \to \infty$. Hence, in this case, $ \mathbb{A}^*_{s}(P;d) = 0$.

\subsubsection{Case $s i^{\ell+\frak{r}} = -1$ and  $-\frac{d}{2} < \gamma$} We now consider $f_1$ in \eqref{20200522_22:32} with the choice
$$A_1 = \frac{a_1^{\frac{d+\ell+\gamma}{2}} - a_1^{\frac{\ell - \gamma}{2}}}{b_1^{\frac{d+\ell+\gamma}{2}} - b_1^{\frac{\ell - \gamma}{2}}}.$$
Observe that $\int_{\R^d} Pf_1 = 0$. We consider $a_1 = 1 + 2\alpha$ and $b_1 = 1 + \alpha$ with $0 < \alpha \leq \sqrt{2}$ to be chosen. Using the expansion 
\begin{equation*}
\frac{a_1 \log a_1}{(a_1^2 -1)} = \frac{1}{2} + O(\alpha),
\end{equation*}
we note that the inequality 
\begin{equation}\label{20200516_11:26}
a_1^{\frac{d + 2\ell}{2}}\,e^{-a_1\pi |x|^2} \leq \frac{1}{2} \, e^{-\frac{1}{a_1}\pi |x|^2}
\end{equation}
holds for all $|x| \geq r_1$, where
\begin{equation}\label{20200516_11:32}
r_1 = \sqrt{\frac{(d + 2\ell)}{2\pi} \left(\frac{1}{2} + O(\alpha)\right) + O\left(\frac{1}{\alpha}\right)}.
\end{equation}
Assuming that \eqref{20200516_11:26} holds, we have that 
\begin{equation*} 
Pf_1 \geq PH \, e^{-\frac{1}{b_1}\pi |x|^2} \left( \frac{1}{2}\,e^{(\frac{1}{b_1} -\frac{1}{a_1})\pi |x|^2} - A_1\right) \ \ \ (x \neq 0).
\end{equation*}
This tells us that $r(Pf_1) \leq \max\{r_1, r_2\}$, with $r_1$ as in \eqref{20200516_11:32} and 
\begin{equation}\label{20200516_12:08}
r_2:= \sqrt{\frac{\log 2 A_1}{\pi} \frac{a_1 b_1}{(a_1 - b_1)}}.
\end{equation}
As before, let $\rho := \max\{d + \ell + \gamma\, ,\, \ell - \gamma\}  = d + \ell + \gamma$ in this case. Observe that we can write $A_1$ as 
\begin{equation*}
A_1 =\left( \frac{a_1}{b_1}\right)^{\rho/2} \left( \frac{1 - a_1^{-\frac{(d + 2\gamma)}{2}}}{1 - b_1^{-\frac{(d + 2\gamma)}{2}}}\right).
\end{equation*}
Since $1 < b_1 < a_1$, one can verify that the function 
$$t \mapsto \left( \frac{1 - a_1^{-t}}{1 - b_1^{-t}}\right)$$
is non-increasing for $t > 0$, with the limit being $\log a_1 / \log b_1$ as $t \to 0^+$. Hence
\begin{equation}\label{20200516_12:07}
1 \leq A_1 \leq \left( \frac{a_1}{b_1}\right)^{\rho/2} \left(\frac{\log a_1}{\log b_1}\right).
\end{equation}
Now we plug in the upper bound \eqref{20200516_12:07} in \eqref{20200516_12:08} and use the expansions 
\begin{equation*}
\frac{a_1 b_1(\log a_1 - \log b_1)}{(a_1 - b_1)} = 1 + O(\alpha) \ \  ;  \ \ \frac{a_1 b_1(\log (\log a_1/ \log b_1))}{(a_1 - b_1)} =  O\left(\frac{1}{\alpha}\right) \ \ ; \ \ \frac{a_1 b_1}{(a_1 - b_1)} = O\left(\frac{1}{\alpha}\right)
\end{equation*}
to find that 
\begin{equation*}
r_2 \leq \sqrt{\frac{\rho}{2\pi} \big(1 + O(\alpha)\big) + O\left(\frac{1}{\alpha}\right)}\ .
\end{equation*}
This is the same as \eqref{20200516_10:56}. We have seen that the choice $\alpha = \frac{1}{\sqrt{\rho}}$ leads to $r_2 \leq  \sqrt{\frac{\rho}{2\pi}} + O(1)$. Since $(d + 2\ell)/2 \leq \rho$, we also have $r_1 \leq  \sqrt{\frac{\rho}{2\pi}} + O(1)$. This concludes the proof of Theorem \ref{Thm_Harmonic_part}.

\subsection{An additional reduction} We briefly present a related result that may be helpful in some situations. This is inspired in similar reductions in \cite{BCK, CG}.

\begin{proposition}\label{Prop_8_integral_zero}
Let $P:\R^d \to \R$ be a function verifying properties {\rm (P1), (P2), (P3)} and {\rm(P4)}. Assume that $P = H\,.\, Q$, where $H:\R^d \to \R$ is a homogeneous and harmonic polynomial of degree $\ell \geq 0$, and $Q:\R^d \to \R$ is a non-negative function. Let $\frak{r}$ as in \eqref{20200404_03:06}. If $s\,i^{\ell + \frak{r} } = 1$, or if $s\,i^{\ell + \frak{r} } = -1$ and $\gamma \geq \ell$, we can reduce the search in \eqref{20200508_13:30} - \eqref{20200508_13:31} to functions verifying $\int_{\R^d} Pf = 0$. 
\end{proposition}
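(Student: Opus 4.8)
The plan is to show that the infimum in \eqref{20200508_13:31} is unchanged if we add the extra constraint $\int_{\R^d} Pf = 0$, by starting from an arbitrary $f \in \mathcal{A}^*_{s}(P;d)$ with $\int_{\R^d} Pf < 0$ and building from it a competitor $\widetilde f$ that satisfies $\int_{\R^d} P\widetilde f = 0$, is still an eigenfunction with the correct eigenvalue $s i^{\frak r}$, still has $P\widetilde f$ eventually non-negative, and does not increase $r(Pf)$. The natural device is to add to $f$ a correcting eigenfunction of the form $g = H h_{d+2\ell}$ — for instance a Gaussian-type combination as in \eqref{20200516_10:38} / \eqref{20200522_22:32} — scaled so that $Pg = H^2 Q\,(\cdot)$ is non-negative outside the origin (using that $Q \geq 0$ and that $\ell$ and $\frak r$ share parity, which forces $H^2Q$ to be genuinely non-negative off the origin), has a prescribed strictly positive value of $\int_{\R^d} Pg$, and has its last sign change $r(Pg)$ as small as we like after rescaling. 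Since condition (P4) gives the usual scaling invariance of $r(Pf)\cdot r(P\f)$, but here we only have a one-sided object $r(Pf)$, we must be slightly careful: the trick is to first dilate the corrector $g$ so that $r(Pg)$ is smaller than $r(Pf)$, and only then fix its amplitude to kill the integral.

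The key steps, in order, are: (1) Reduce to the case $\int_{\R^d} Pf < 0$ strictly (if it is already zero we are done). (2) Produce, using Bochner's relation (Lemma~\ref{HB_form}) exactly as in \S\ref{non-empty classes}, an eigenfunction $g$ with eigenvalue $s i^{\frak r}$ of the form $g = H\,(g_0)_{d+2\ell}$ such that $Pg \geq 0$ outside the origin, $Pg \not\equiv 0$, hence $\int_{\R^d} Pg > 0$; when $s i^{\ell+\frak r} = 1$ one can take $g = h_0$ from \eqref{20200516_10:38} (a single Gaussian times $H$), and when $s i^{\ell+\frak r} = -1$ with $\gamma \geq \ell$ one takes $g$ of the type $g_1$ in \eqref{20200522_22:32}, whose integral against $P$ is a nonzero multiple of $\big(a_1^{(d+\ell+\gamma)/2} - a_1^{(\ell-\gamma)/2}\big)$, which for $\gamma \geq \ell$ and $a_1 > 1$ has a fixed sign; note $Pg_1$ is eventually non-negative and, by choosing $a_1$ appropriately, one can also arrange $Pg_1 \geq 0$ everywhere off the origin after passing to a suitable rescaling. (3) For $\delta > 0$ let $g^{(\delta)}(x) := g(\delta x)$; by (P4), $Pg^{(\delta)}$ is again $\geq 0$ off the origin, remains an eigenfunction with the same eigenvalue, and $r(Pg^{(\delta)}) = r(Pg)/\delta \to 0$ as $\delta \to \infty$, while $\int_{\R^d} Pg^{(\delta)} = \delta^{-(d+\gamma)} \int_{\R^d} Pg$ stays strictly positive. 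Choose $\delta$ large enough that $r(Pg^{(\delta)}) \leq r(Pf)$. (4) Set $\widetilde f := f + \lambda\, g^{(\delta)}$ with $\lambda := -\big(\int_{\R^d} Pf\big)\big/\big(\int_{\R^d} Pg^{(\delta)}\big) > 0$; then $\int_{\R^d} P\widetilde f = 0$, $\widehat{\widetilde f} = s i^{\frak r}\widetilde f$, and $P\widetilde f = Pf + \lambda\, Pg^{(\delta)}$ is a sum of two functions each eventually non-negative, with $Pg^{(\delta)} \geq 0$ off the origin, so $P\widetilde f \geq Pf$ pointwise off the origin and therefore $r(P\widetilde f) \leq \max\{r(Pf), r(Pg^{(\delta)})\} = r(Pf)$. (5) Check $\widetilde f \not\equiv {\bf 0}$: if it were, then $Pf = -\lambda Pg^{(\delta)}$ would be eventually non-positive as well as eventually non-negative, hence eventually zero, and (P3) would force $f = {\bf 0}$, contradicting $f \in \mathcal{A}^*_{s}(P;d)$. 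Thus $\widetilde f \in \mathcal{A}^*_{s}(P;d)$ with $\int_{\R^d} P\widetilde f = 0$ and $r(P\widetilde f) \leq r(Pf)$, which gives the claimed reduction.

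The main obstacle is step (2)–(3): one needs a corrector $g$ that simultaneously is an eigenfunction with the prescribed eigenvalue $s i^{\frak r}$, has $Pg$ of one sign outside the origin (not merely eventually), and has a strictly signed — and, after rescaling, arbitrarily small — value of $r(Pg)$. The sign of $\int_{\R^d}Pg$ is what pins down the dichotomy in the hypothesis: in the $s i^{\ell+\frak r} = -1$ regime the integral $\int_{\R^d}Pg_1$ is proportional to $a_1^{(d+\ell+\gamma)/2} - a_1^{(\ell-\gamma)/2}$, which is guaranteed to be positive (for $a_1>1$) precisely when $\tfrac{d+\ell+\gamma}{2} > \tfrac{\ell-\gamma}{2}$, i.e. $\gamma > -\tfrac d2$ — and more is needed for $Pg_1$ to be non-negative everywhere off the origin, which is where the extra assumption $\gamma \geq \ell$ comes in, mirroring \S\ref{non-empty classes}. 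In the $s i^{\ell+\frak r} = +1$ regime no such restriction is needed because one may take $g$ to be a single positive Gaussian times $H$, whose integral against $P = H^2 Q \cdot |x|^{-2\ell}\cdot(\ldots)$ is automatically positive. Everything else is the routine bookkeeping of eigenvalue parity (identical to that in \S\ref{non-empty classes}) and the scaling identity from (P4).
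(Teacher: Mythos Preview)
Your overall strategy is right and, in Case~1 ($s\,i^{\ell+\frak r}=1$), your argument coincides with the paper's: one takes $g=h_0=H\,e^{-\pi|\cdot|^2}$, for which $Pg=H^2Q\,e^{-\pi|\cdot|^2}\geq 0$ everywhere (so $r(Pg)=0$) and $\int Pg>0$, and then corrects $f$ by a positive multiple of $g$. No rescaling is needed here.

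In Case~2 there is a genuine gap: your step~(3) is false. Dilations do \emph{not} preserve the eigenfunction property --- if $\widehat g = s i^{\frak r} g$ then $\widehat{g^{(\delta)}}(\xi)=\delta^{-d}\,\widehat g(\xi/\delta)=s i^{\frak r}\delta^{-d}g(\xi/\delta)$, which is not a scalar multiple of $g^{(\delta)}(\xi)=g(\delta\xi)$ unless $\delta=1$. So you cannot dilate $g_1$ to shrink $r(Pg_1)$ while keeping it an eigenfunction. A related error: your claim that ``$Pg_1\geq 0$ everywhere off the origin after a suitable rescaling'' is also false --- the radial factor $e^{-\pi|x|^2/a_1}-a_1^{(d+2\ell)/2}e^{-a_1\pi|x|^2}$ is strictly negative for small $|x|$ regardless of $a_1>1$, and dilation only moves (not removes) this negative region. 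Your explanation that $\gamma\geq\ell$ is what forces $Pg_1\geq 0$ off the origin is therefore not correct.

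Your idea is easily repaired: instead of dilating, use the parameter $a_1$ itself, since $r(Pg_1)=\big(\tfrac{(d+2\ell)\log a_1}{2\pi(a_1-1/a_1)}\big)^{1/2}\to 0$ as $a_1\to\infty$; choose $a_1$ so large that $r(Pg_1)\leq r(Pf)$ and proceed with step~(4). (Amusingly, this repaired version only needs $\gamma>-d/2$, not $\gamma\geq\ell$.) The paper takes a different route in Case~2: it builds the corrector as $\psi_t - i^{\ell}\widehat{\psi_t}$ from a \emph{non}-eigenfunction $\psi_t(x)=H(x)\big(e^{-t\pi|x|^2}-2^{-(\gamma-\ell)/2}e^{-2t\pi|x|^2}\big)$; the coefficient $2^{-(\gamma-\ell)/2}$ is chosen so that $\int P\widehat{\psi_t}=0$ (hence the integral correction comes solely from $\psi_t$), the hypothesis $\gamma\geq\ell$ is what guarantees $P\psi_t\geq 0$ \emph{everywhere}, and the free parameter $t$ is tuned so that the sign change of $-i^{\ell}P\widehat{\psi_t}$ occurs exactly at $r(Pf)$.
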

\begin{proof} Assume that $f \in \mathcal{A}^{*}_{s}(P;d)$ is such that $\int_{\R^d} Pf <0$. Note, in particular, that we cannot have $P(x) = 0$ a.e. in this situation. Let us show how we can adjust the function $f$.
\subsubsection*{Case 1: $s i^{\ell + \frak{r} } = 1$}  Let $\varphi(x) = H(x)\,e^{-\pi |x|^2}$. By Lemma \ref{HB_form} we have $\widehat{\varphi} = (-i)^{\ell} \varphi = si^{\frak{r}} \varphi $. Then $\int_{\R^d} P \varphi > 0$ and we may consider 
\begin{equation*}
g(x) = f(x) - \frac{ \int_{\R^d} Pf}{\int_{\R^d} P \varphi} \, \varphi(x).
\end{equation*}
One can verify that $g \neq {\bf 0}$ (otherwise $Pf$ is eventually zero and from (P3) we get a contradiction), $\widehat{g} = si^{\frak{r}} g$, $r(Pg) \leq r(Pf)$ and $ \int_{\R^d} Pg = 0$. 

\subsubsection*{Case 2: $s i^{\ell + \frak{r} } = -1$ and $\gamma \geq \ell$} Let $t>0$ be a parameter to be chosen later, and define
 \begin{equation*}
 \psi_t(x):= H(x) \left(e^{-t\pi|x|^2} - 2^{-\frac{(\gamma -\ell)}{2}}\,e^{-2t\pi |x|^2}\right).
 \end{equation*}
 Then, by Lemma \ref{HB_form},  
 \begin{equation*}
 \widehat{\psi_t}(x) = (-i)^{\ell} H(x) \left(t^{-\frac{(d+2\ell)}{2}}e^{-\frac{\pi|x|^2}{t}} - 2^{-\frac{(\gamma -\ell)}{2}}\,(2t)^{-\frac{(d+2\ell)}{2}} e^{-\frac{\pi |x|^2}{2t}} \right).
\end{equation*}
Observe that $\psi_t$ is a Schwartz function that satisfies $P \psi_t \geq 0$ and $ \int_{\R^d} P \psi_t  > 0$. Using the homogeneity, a change of variables shows that $ \int_{\R^d} P \widehat{\psi_t}  = 0$. Observe also that 
\begin{equation*}
i^{\ell} P(x) \,\widehat{\psi_t}(x)<0\ \ {\rm for }\ \ |x|>\sqrt{\dfrac{(d+\ell +\gamma)\,t\log 2}{\pi}}.
\end{equation*}
We choose $t>0$ such that 
$$r(Pf) = \sqrt{\dfrac{(d+\ell +\gamma)\,t\log 2}{\pi}}$$
and consider 
$$g(x) = f(x) - \frac{ \int_{\R^d} Pf}{\int_{\R^d} P \psi_t} \big(\psi_t(x)-i^{\ell}\,\widehat{\psi_t}(x)\big).$$
One can verify that $g \neq {\bf 0}$ (otherwise $Pf$ is eventually zero and from (P3) we get a contradiction), $\widehat{g} = si^{\frak{r}} g$, $r(Pg) \leq r(Pf)$ and $ \int_{\R^d} Pg = 0$. 
 \end{proof}

\section{Sufficient conditions for admissibility: proof of Theorem \ref{Thm_finite_level_sets}}
For $E \subset \R^d$, recall that $|E|$ denotes its Lebesgue measure, and we let $E^c = \R^d \setminus E$. The following classical result will be useful.
\begin{lemma}[Amrein-Berthier \cite{AB}]\label{AB_unc}
Let $E, F \subset \R^d$ be sets of finite measure. Then there exists a constant $C= C(E,F;d) >0$ such that for all $g \in L^2(\R^d)$ we have
\begin{equation}\label{20200401_18:10}
\int_{\R^d} |g(x)|^2\,\d x \leq C \left( \int_{E^c} |g(x)|^2\,\d x + \int_{F^c} |\widehat{g}(x)|^2\,\d x\right). 
\end{equation}
\end{lemma}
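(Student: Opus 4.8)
The plan is to recast the inequality as an estimate on the operator norm of a product of two orthogonal projections, and then to exploit compactness. Write $Q_E$ for multiplication by $\chi_E$ on $L^2(\R^d)$, and let $P_F$ be the band-limiting projection $P_F g := \mathcal{F}_d^{-1}\big(\chi_F\,\widehat g\,\big)$; both are orthogonal projections. By Plancherel one has $\|\widehat g\|_{L^2(F^c)} = \|g - P_F g\|_2$ and $\|g\|_{L^2(E^c)} = \|g - Q_E g\|_2$, so two applications of the triangle inequality give
\[
\|g\|_2 \;\le\; \|Q_E P_F g\|_2 \;+\; \|g - Q_E g\|_2 \;+\; 2\,\|g - P_F g\|_2 .
\]
Hence, if we set $\kappa := \|Q_E P_F\|_{\mathrm{op}}$ and can show $\kappa < 1$, then moving $\kappa\|g\|_2$ to the left-hand side and applying Cauchy--Schwarz yields \eqref{20200401_18:10} with $C = 5/(1-\kappa)^2$. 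So the whole matter reduces to proving $\kappa < 1$.

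Since $|F| < \infty$, the function $K := \mathcal{F}_d^{-1}\chi_F$ lies in $L^2(\R^d)$ with $\|K\|_2 = |F|^{1/2}$, and $P_F g = K * g$. Therefore $Q_E P_F$ is the integral operator with kernel $(x,y) \mapsto \chi_E(x)\,K(x-y)$, whose Hilbert--Schmidt norm squared equals $\int_E \|K\|_2^2\,\d x = |E|\,|F| < \infty$; thus $Q_E P_F$ is Hilbert--Schmidt, in particular compact. As a product of two orthogonal projections it satisfies $\kappa \le 1$. Suppose, for contradiction, that $\kappa = 1$. Then $A := P_F Q_E P_F = (Q_E P_F)^*(Q_E P_F)$ is a compact, self-adjoint, nonnegative operator with $\|A\| = \kappa^2 = 1$, so $1$ is attained as an eigenvalue: there is $g$ with $\|g\|_2 = 1$ and $P_F Q_E P_F g = g$. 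Applying $P_F$ and using $P_F^2 = P_F$ forces $P_F g = g$, i.e. $\widehat g$ is supported in $F$; then $P_F Q_E g = g$ gives $\|Q_E g\|_2 \ge \|g\|_2$, which together with $\|Q_E g\|_2 \le \|g\|_2$ and the Pythagorean identity $\|g\|_2^2 = \|Q_E g\|_2^2 + \|g - Q_E g\|_2^2$ forces $Q_E g = g$, i.e. $g$ is supported in $E$.

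We have produced a nonzero $g \in L^2(\R^d)$ such that both $g$ and $\widehat g$ are supported on sets of finite Lebesgue measure, which is impossible by the qualitative uncertainty principle of Benedicks (already essentially contained in \cite{AB}): this is the only substantive input, and I expect it to be the step requiring genuine care, since everything above it is soft functional analysis. It can be proved by periodizing $g$ over a lattice $a\Z^d$ after a generic translation, so that the resulting periodic function has only finitely many nonzero Fourier coefficients while being supported, modulo the lattice, on a set of measure strictly less than that of a fundamental domain — hence identically zero — and then letting $a \to 0^+$ to conclude $g = {\bf 0}$. This contradiction gives $\kappa < 1$ and completes the proof.
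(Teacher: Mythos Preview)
The paper does not supply its own proof of this lemma: it is quoted from Amrein--Berthier \cite{AB} and immediately followed by a remark on the quantitative refinements of Nazarov \cite{N} and Jaming \cite{J}. So there is nothing in the paper to compare against; your task was effectively to reproduce a proof of a cited black box.

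Your argument is correct and is essentially the classical route: reduce \eqref{20200401_18:10} to the operator bound $\|Q_E P_F\|_{\mathrm{op}}<1$, observe that $Q_E P_F$ is Hilbert--Schmidt (with $\|Q_E P_F\|_{\mathrm{HS}}^2=|E|\,|F|$) hence compact, and use the spectral theorem on $P_F Q_E P_F$ to conclude that $\|Q_E P_F\|_{\mathrm{op}}=1$ would produce a nonzero $g$ with $\mathrm{supp}\,g\subset E$ and $\mathrm{supp}\,\widehat g\subset F$, contradicting the qualitative Benedicks/Amrein--Berthier theorem. Two minor comments. First, your triangle-inequality chain can be sharpened to coefficient $1$ rather than $2$ on $\|g-P_Fg\|_2$ via the decomposition $g=(g-Q_Eg)+Q_E(g-P_Fg)+Q_EP_Fg$; this only affects the numerical constant. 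Second, your sketch of the qualitative step is slightly garbled: the standard Benedicks argument does not send the lattice spacing $a\to 0^+$. One fixes the lattice (after a single rescaling so that, say, $|E|$ is strictly smaller than the volume of a fundamental domain), then for a \emph{generic} modulation $\xi$ the Poisson-summation periodization of $g(\cdot)e^{-2\pi i\xi\cdot(\cdot)}$ is a trigonometric polynomial (only finitely many lattice points meet $F-\xi$) that vanishes on a set of positive measure in the torus, hence identically; varying $\xi$ over a full-measure set kills $\widehat g$ everywhere. This is a cosmetic fix and does not affect the validity of your overall strategy.
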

\noindent{\sc Remark:} Later works of Nazarov \cite{N} and Jaming \cite{J} show that \eqref{20200401_18:10} holds with 
$$C(E,F;d) \leq c \,e^{c|E||F|},$$
for some $c = c(d)$.
\subsection{Proof of Theorem \ref{Thm_finite_level_sets}: general case} \label{gen_case_sub_level} Let $f \in L^1(\R^d)$ with $\f= \pm i^{\frak{r}}f$, and let $A = A_{\lambda} = \{x \in \R^d \,:\, |P(x)|\leq \lambda\}$ be of finite Lebesgue measure. For a set $E \subset \R^d$, let $f_E := f \cdot \chi_E$, where $\chi_{E}$ is the characteristic function of $E$. By the triangle inequality and the Cauchy-Schwarz inequality we have
\begin{align}\label{20200401_22:59}
\|f\|_1\leq \big\|f_{A}\big\|_1 + \big\|f_{A^c}\big\|_1 \leq |A|^{1/2}\, \big\|f_{A}\big\|_2 + \big\|f_{A^c}\big\|_1.
\end{align}
In the terminology of Lemma \ref{AB_unc}, let $E=F=A$ and let $C = C(A,A;d)$ in \eqref{20200401_18:10}. Letting $g = f_{A}$ in \eqref{20200401_18:10} we plainly get
\begin{equation*}
\int_{A} |f(x)|^2\,\d x \leq C \int_{A^c} \big|\widehat{f_{A}}(x)\big|^2\,\d x = C \left(   \int_{A} |f(x)|^2\,\d x - \int_{A} \big|\widehat{f_{A}}(x)\big|^2\,\d x\right),
\end{equation*}
and then 
\begin{equation}\label{20200401_22:58}
\int_{A} \big|\widehat{f_A}(x)\big|^2\,\d x \leq \frac{(C -1)}{C} \int_{A} |f(x)|^2\,\d x.
\end{equation}
The fact that $f$ is an eigenfunction yields $f =  \pm (-i)^{\frak{r}} \f = \pm (-i)^{\frak{r}} \big( \widehat{f_A} + \widehat{f_{A^c}}\big)
$ and hence
\begin{align*}
f_A =  \pm (-i)^{\frak{r}} \left( \big(\widehat{f_A}\big)_A + \big(\widehat{f_{A^c}}\big)_A\right).
\end{align*}
A basic triangle inequality then yields
\begin{equation}\label{20200401_22:56}
\|f_A \|_2 \leq \left\| \big(\widehat{f_A}\big)_A\right\|_2 + \left\| \big(\widehat{f_{A^c}}\big)_A\right\|_2.
\end{equation}
We bound the last term in \eqref{20200401_22:56} as follows:
\begin{equation}\label{20200401_22:57}
 \left\| \big(\widehat{f_{A^c}}\big)_A\right\|_2 \leq  \left\| \big(\widehat{f_{A^c}}\big)\right\|_\infty \, |A|^{1/2} \leq \|f_{A^c}\|_1 \  |A|^{1/2}.
\end{equation}
From \eqref{20200401_22:58}, \eqref{20200401_22:56} and \eqref{20200401_22:57} we get
\begin{align*}
\|f_A \|_2 \leq \left( \frac{C-1}{C}\right)^{1/2} \|f_A \|_2 + |A|^{1/2} \, \|f_{A^c}\|_1,
\end{align*}
which implies that
\begin{equation}\label{20200401_23:09}
\|f_A \|_2 \leq \frac{|A|^{1/2}}{ \left(1 - \left( \frac{C-1}{C}\right)^{1/2}\right)} \ \|f_{A^c}\|_1.
\end{equation}
Finally, plugging \eqref{20200401_23:09} into \eqref{20200401_22:59} yields
\begin{align*}
\|f\|_1 \leq \left(1 + \frac{|A|}{ \left(1 - \left( \frac{C-1}{C}\right)^{1/2}\right)}\right)  \|f_{A^c}\|_1 \leq \left(1 + \frac{|A|}{ \left(1 - \left( \frac{C-1}{C}\right)^{1/2}\right)}\right) \lambda^{-1}\, \|Pf\|_1,
\end{align*}
as we wanted.

\subsection{Homogeneous case} If $P$ is homogeneous of degree $\gamma = 0$, inequality \eqref{20200509_16:53} is clear. In this case, $|A_{\lambda}|$ is either $0$ or $\infty$, and therefore (P5) implies that ess$\inf |P|>0$. Assume then that $P$ is homogeneous of degree $\gamma > 0$. In this case, $A_{\lambda} = \lambda^{1/\gamma}A_1$, and hence $|A_{\lambda}| = \lambda^{d/\gamma} |A_1|$. Let us write again $A = A_{\lambda}$ for some $\lambda >0$ to be properly chosen later, with the condition that $|A_{\lambda}| < 1$. By the Hausdorff-Young and Cauchy-Schwarz inequalities we have
\begin{align*}
\big\| \widehat{f_{A}}\big\|_{\infty} \leq \big\|f_{A}\big\|_1 \leq  \big\|f_{A}\big\|_2\,  |A|^{1/2},
\end{align*}
from which we obtain
\begin{equation}\label{20200511_17:32}
\int_{A} \big|\widehat{f_{A}}(x)\big|^2\,\d x \leq |A|^2 \int_{A} |f(x)|^2\,\d x.
\end{equation}
We let estimate \eqref{20200511_17:32} replace \eqref{20200401_22:58} in the proof of the general case in \S \ref{gen_case_sub_level}. If we repeat all the other steps we get
\begin{align*}
\|f\|_1 \leq \left(1 + \frac{|A|}{ \big(1 -|A|\big)}\right)  \big\|f_{A^c}\big\|_1 = \frac{1}{1 - |A|} \,\big\|f_{A^c}\big\|_1 \leq \frac{1}{(1 - \lambda^{d/\gamma}|A_1|)} \,\lambda^{-1} \,\|Pf\|_1.
\end{align*}
We are now free to choose $\lambda >0$ in order to minimize the function 
$$\varphi(t) = \frac{1}{t\,(1 - t^{d/\gamma}|A_1|)},$$
subject to the condition $|A_{\lambda}| = \lambda^{d/\gamma} |A_1| < 1$. The minimum occurs when 
$$\lambda^{d/\gamma} |A_1| = \frac{\frac{\gamma}{d}}{1 + \frac{\gamma}{d}},$$
which gives
$$\varphi(\lambda) = \left( 1 + \frac{\gamma}{d}\right) \left[ \left( 1 + \frac{d}{\gamma}\right) |A_1|\right]^{\frac{\gamma}{d}}.$$

\section{Sign uncertainty: proof of Theorem \ref{Thm_general_sign_uncertainty}}
Throughout this proof we let $\omega_{d-1} = 2 \,\pi^{d/2} \,\Gamma(d/2)^{-1}$ be the surface area of the unit sphere $\mathbb{S}^{d-1} \subset \R^d$. For $y \in \R$ we denote $y_+ =\max\{y,0\}$ and $y_- =  \max\{-y,0\}$. 

\subsection{Lower bound} Let $f \in \mathcal{A}^*_{s}(P;d)$ and let $r$ be arbitrary with $r > r(Pf)$. Then
\begin{align*}
\begin{split}
\int_{\R^d} P(x) \,f(x)\,\d x=  \int_{\R^d} [P(x)f(x)]_+\,\d x - \int_{\R^d} [P(x)f(x)]_-\,\d x \ \leq \ 0.
\end{split}
\end{align*}
Let $B_r = \{x \in \R^d\,:\, |x| < r\}$. By definition, $[Pf]_-$ is supported on $\overline{B_r}$ and $[Pf]_- \leq |Pf|$. Since $P$ is admissible with respect to an exponent $1 \leq q \leq \infty$, by H\"{o}lder's inequality we have
\begin{align}\label{20200511_15:24}
\|Pf\|_1  & =  \int_{\R^d} [Pf]_+  + \int_{\R^d} [Pf]_- \ \leq  \ 2 \int_{\R^d} [Pf]_- \ \leq \ 2 \int_{B_r} |Pf| \  \leq \ 2 \, \big\|P \chi_{B_r}\big\|_{q'}\,  \|f\|_q.
\end{align}
From \eqref{20200331_11:16} and \eqref{20200511_15:24} we get
\begin{align*}
\|f\|_q \leq C(P;d;q) \, \| P f \|_1 \leq 2 \,C(P;d;q) \,\big\|P \chi_{B_r}\big\|_{q'} \,\|f\|_q\,,
\end{align*}
and we conclude that 
\begin{equation}\label{20200511_22:22}
\big\|P \chi_{B_r}\big\|_{q'} \geq \frac{1}{2\,C(P;d;q)}.
\end{equation}
From \eqref{20200511_22:22} we deduce that $r$ is bounded below by a constant, since, by assumption (ii) in the definition of admissible function, we have $\lim_{r \to 0^+} \big\|P \chi_{B_r}\big\|_{q'} = 0$.

\subsection{Homogeneous case} In this case observe that $|P(x)| \leq K |x|^{\gamma}$ and we can directly compute
\begin{align}\label{20200511_22:20}
\big\|P \chi_{B_r}\big\|_{q'} \leq K \left(\int_{B_r} |x|^{\gamma q'}\,\d x\right)^{1/q'}  = K 
\left( \frac{\omega_{d-1}\,r^{d+\gamma q'} }{d+\gamma q'}\right)^{1/q'} 
\end{align}
if $q > 1$. If $q = 1$ (and hence $\gamma >0$ from the admissibility hypotheses) we simply have 
\begin{align}\label{20200511_22:21}
\big\|P \chi_{B_r}\big\|_{\infty} \leq K r^{\gamma}.
\end{align}
Plugging \eqref{20200511_22:20} - \eqref{20200511_22:21} into \eqref{20200511_22:22} yields, for $q >1$, 
\begin{equation}\label{20200416_22:18}
r  \geq \left(\frac{(d + \gamma q')\,\Gamma(d/2)}{2 \,\pi^{\frac{d}{2}} \,(2\,K \, C(P;d;q))^{q'}}\right)^{\frac{1}{(d + \gamma q')}},
\end{equation}
If $q = 1$, the right-hand side of \eqref{20200416_22:18} becomes $(2\,K\, C(P;d;1))^{-1/\gamma}$.

\subsection{Existence of extremizers} The argument to establish the existence of extremizers in certain Fourier optimization problems generally involves showing that a suitable weak limit is a viable candidate. Examples of such methods can be found in \cite{CMS, CG, GOS}.

\smallskip

Let $\{f_n\} \subset \mathcal{A}^*_{s}(P;d)$ be an extremizing sequence. This implies that $r(Pf_n) \to \mathbb{A}^*:= \mathbb{A}^*_{s}(P;d)$, and we may assume that $r(Pf_n)$ is non-increasing. We normalize the sequence so that $\|f_n\|_2 =1$. From the reflexivity of $L^2(\R^d)$, passing to a subsequence if necessary, we may assume that $f_n \rightharpoonup f$ weakly, for some $f \in L^2(\R^d)$. By Plancherel's theorem, note that $\widehat{f_n} \rightharpoonup \f$ and therefore $\widehat{f} = si^{\frak{r}} f$. We now prove that $f$ is equal a.e. to our desired extremizer.

\smallskip

Let $r_1 = r(Pf_1)$. Then $r_1\geq r(Pf_n) \geq \mathbb{A}^*$ for all $n \in \N$. Since we are assuming property (P5), Theorem \ref{Thm_finite_level_sets} tells us that \eqref{20200331_11:16} holds with $q=1$, and hence also with $q =2$. Estimate \eqref{20200511_15:24} also holds for $q=1$ and $q=2$, and under conditions (P1) and (P7) we then have
\begin{align}\label{20200406_15:54}
\|f_n\|_1 \simeq \|f_n\|_2 \simeq \|Pf_n\|_1,
\end{align}
with implied constants only depending on $d, P$ and $r_1$. By Mazur's lemma \cite[Corollary 3.8 and Exercise 3.4]{B}, we can find $g_n$ a finite convex combination of $\{f_n, f_{n+1}, \ldots\}$ such that $g_n \to f$ strongly in $L^2(\R^d)$. Passing to a subsequence, if necessary, we may also assume that $g_n \to f$ almost everywhere. Observe that $g_n$ is not identically zero (since each $Pf_k$ must be strictly positive somewhere in $\{x \in \R^d \, : \, |x| > r_1\}$ due to condition (P3) which is implied by (P5)) and hence $g_n \in \mathcal{A}^*_{s}(P;d)$ with \begin{equation}\label{20200406_18:33}
r_1\geq r(Pf_n) \geq r(Pg_n) \geq \mathbb{A}^*
\end{equation}
for all $n \in \N$. By the triangle inequality, we have $\|g_n\|_2 \leq 1$. The norm equivalences as in \eqref{20200406_15:54} continue to hold for $g_n$. In particular, by Fatou's lemma,
\begin{equation}\label{20200406_18:31}
\|f\|_1 \leq \liminf_{n \to \infty} \|g_n\|_1 \lesssim \|g_n\|_2 \leq 1 \ \ \ \ {\rm and}\  \ \ \ \|Pf\|_1 \leq \liminf_{n \to \infty} \|Pg_n\|_1 \lesssim \|g_n\|_2 \leq 1.
\end{equation}
By the Hausdorff-Young inequality, $\|g_n\|_{\infty} \leq \|g_n\|_1 \lesssim |\|g_n\|_2 \leq 1$, and we may then use dominated convergence to get
\begin{equation}\label{20200406_16:55}
\int_{B_{r_1}} Pf = \lim_{n \to \infty} \int_{B_{r_1}} Pg_n.
\end{equation}
Fatou's lemma again gives us
\begin{equation}\label{20200406_16:56}
\int_{B_{r_1}^c} Pf \leq \liminf_{n \to \infty} \int_{B_{r_1}^c} Pg_n,
\end{equation}
and if we add up \eqref{20200406_16:55} and \eqref{20200406_16:56} we get
\begin{equation*}
\int_{\R^d} Pf  \leq \liminf_{n \to \infty} \int_{\R^d} Pg_n \leq 0,
\end{equation*}
since $g_n \in \mathcal{A}^*_{s}(P;d)$. By \eqref{20200406_18:31}, since $f$ is an integrable eigenfunction, it is equal a.e. to a continuous function, and we make this identification. Once we establish that $f \neq {\bf 0}$, we will have that $f \in \mathcal{A}^*_{s}(P;d)$. In fact, assume that $g_n(x) \to f(x)$ for all $x \in E$, where $\big|\R^d \setminus E\big| = 0$. Then, if $x \in E \cap \overline{B}_{\mathbb{A^*}}^c$, from \eqref{20200406_18:33} we get $P(x)f(x) \geq 0$. Now consider $x \in E^c \cap \overline{B}_{\mathbb{A^*}}^c$ such that $P(x) \neq 0$. From the sign density property (P9) we can find a sequence $x_j \to x$ with $x_j \in E \cap \overline{B}_{\mathbb{A^*}}^c$ and $P(x_j)P(x)>0$. Since $P(x_j)f(x_j) \geq 0$, we have $P(x)f(x_j) \geq 0$ and, by the continuity of $f$, we arrive at $P(x)f(x)\geq 0$. The conclusion is that $P(x)f(x) \geq 0$ for all $|x| >\mathbb{A}^*$. Hence $r(Pf) = \mathbb{A}^*$ and $f$ will be our desired extremizer. 

\smallskip

It remains to show that $f \neq {\bf 0}$. Under (P5), let $A = A_{\lambda} = \{x \in \R^d \,:\, |P(x)|\leq \lambda\}$ be of finite measure. From Lemma \ref{AB_unc} (with $E = F = A \cup B_{r_1}$), the Hausdorff-Young inequality, and \eqref{20200406_15:54} we get 
\begin{align}\label{20200406_18:40}
1 = \|f_n\|_2^2 & \lesssim \int_{B_{r_1}^c \cap A^c} |f_n(x)|^2 \, \d x \leq \|f_n\|_{\infty} \int_{B_{r_1}^c \cap A^c} |f_n(x)|\,\d x\nonumber\\
& \lesssim   \int_{B_{r_1}^c \cap A^c} P(x) \,f_n(x) \,\d x\\
& \leq \int_{B_{r_1}^c} P(x) \,f_n(x) \,\d x. \nonumber
\end{align}
Since $\int_{\R^d} Pf_n \leq 0$ and $Pf_n$ is non-negative in $\overline{B}_{r_1}^c $, estimate \eqref{20200406_18:40} tells us that there is a positive constant $C$ depending only on $d,P$ and $r_1$ such that 
\begin{align*}
\int_{B_{r_1}} P f_n \leq -C.
\end{align*}
The weak convergence directly implies that (note properties (P1) and (P7))
\begin{align*}
\int_{B_{r_1}} P f \leq -C.
\end{align*}
In particular, this shows that  $f \neq {\bf 0}$ and the proof is concluded.

\section{Dimension shifts: proof of Theorem \ref{Dim_Shift}}
\subsection{Dropping the dimension} \label{Dim_drop}Let us first prove inequality \eqref{20200513_16:45} in the generic case. We are assuming that $\mathcal{A}^{*}_{s}(P;d + 2\ell)$ is non-empty. We first observe that the search can be further restricted to radial functions. For this, let $SO(d + 2\ell)$ be the group of rotations in $\R^{d + 2 \ell}$ (linear orthogonal transformations of determinant $1$) with its Haar measure $\mu$, normalized so that $\mu(SO(d + 2\ell))= 1$. For $ f \in \mathcal{A}^{*}_{s}(P;d + 2\ell)$ we define
\begin{equation}\label{20200415_23:02}
f^{{\rm rad}}(x): = \int_{SO(d + 2 \ell)} f(Rx)\,\d \mu(R).
\end{equation}
One can readily check that $f^{{\rm rad}}$ is continuous, that $f^{{\rm rad}}, Pf^{{\rm rad}} \in L^1(\R^{d+2\ell})$, $\int_{\R^{d+2\ell}} P f^{{\rm rad}}\leq 0$, $\widehat{f^{{\rm rad}}} = s f^{{\rm rad}}$, and that $r(Pf^{{\rm rad}}) \leq r(Pf)$. To see that $f^{{\rm rad}} \neq {\bf 0}$ we argue as follows. Let $r= r(Pf)$. From condition (P3), there exists a certain $x_0 \in \R^{d+2\ell}$, with $|x_0| > r$ such that $P(x_0)f(x_0) > 0$. As $P$ is radial, we have $P(x_0)f(Rx_0) \geq 0$ for all $R \in SO(d + 2 \ell)$, with strict inequality if $R$ is in a suitable neighborhood of the identity, since $f$ is continuous. Then $P(x_0)f^{{\rm rad}}(x_0) >0$. The conclusion is that in fact  $f^{{\rm rad}} \in \mathcal{A}^{*}_{s}(P; d + 2\ell)$ (and does a job at least as good as the original $f$).

\smallskip

Now let us start with $f \in \mathcal{A}^{*}_{s}(P;d + 2\ell)$ radial. Write $f(x) = f_0(|x|)$ for some continuous $f_0:[0,\infty) \to \R$. The conditions $f , Pf \in L^1(\R^{d + 2\ell})$ can be rewritten as
\begin{equation}\label{20200409_15:08}
\int_0^{\infty} |f_0(r)|\,r^{d+2\ell -1}\,\d r<\infty     \ \ {\rm and} \ \ \int_0^{\infty} |f_0(r)|\,|P_0(r)|\, r^{d+2\ell -1}\,\d r<\infty.
\end{equation}
Define $f^{\flat}:\R^d \to \R$ by 
\begin{equation}\label{20200409_15:34}
f^{\flat}(x) := H(x) \, f_0(|x|).
\end{equation}
Then $|f^{\flat}(x)|\leq C |x|^{\ell}|f_0(|x|)|$ and \eqref{20200409_15:08} gives us that $|x|^{\ell} \, f^{\flat}, \, |x|^{\ell}|P_0(|x|)| \, f^{\flat} \in L^1(\R^d)$. This plainly implies that $f^{\flat} , \widetilde{P}f^{\flat} \in L^1(\R^d)$. The latter is obvious if $P_0 = 0$ a.e. in $[0,\infty)$ and, if not, observe that property (P1) for $\widetilde{P}$ implies that $HQ \in L^1(\mathbb{S}^{d-1})$. In this second case we also have
\begin{align*}
\int_{\R^d} \widetilde{P}(x) \,f^{\flat}(x)\,\d x &= \int_{\R^d} H(x)^2 Q(x) P_0(|x|)f_0(|x|)\,\d x \\
& = \left(\int_{\mathbb{S}^{d-1}} H(\omega)^2 Q(\omega)\,\d \sigma(\omega) \right)  \int_0^{\infty} P_0(r)\,f_0(r)\, r^{d+2\ell -1}\,\d r \ \leq \ 0,
\end{align*}
since the quantity in parentheses is non-negative (and finite) and $\int_{\R^{d+2\ell}} Pf \leq 0$. 

\smallskip

From Bochner's relation (Lemma \ref{HB_form}) and the fact that $\widehat{f} =  sf$ we have 
\begin{equation*}
\widehat{f^{\flat}}(x) = (-i)^{\ell} \,H(x)\,  \mathcal{F}_{d+2\ell} [f](x_1, x_2,\ldots, x_d,0,\ldots,0) = s (-i)^{\ell} \,f^{\flat}(x).
\end{equation*}
Note also that $r(\widetilde{P}f^{\flat}) \leq r(Pf)$. The fact that $f^{\flat} \neq {\bf 0}$ follows from the assumption that $f_0\neq {\bf 0}$ and the fact that $H$ cannot be identically zero in any open set of $\R^d$. Then $f^{\flat} \in \mathcal{A}^*_{s(-1)^{(\frak{r(\ell)} + \ell)/2}}\big(\widetilde{P};d\big)$ and \eqref{20200513_16:45} plainly follows.

\subsection{Lifting the dimension} We now work under the additional assumption (P6) for $P$, and consider the case $Q = {\bf 1}$. In case $\ell =0$, we have $H$ being a non-zero constant and \eqref{20200514_10:50} plainly follows. Hence, from now on, let us assume that $1 \leq \ell \leq d$. By a change of variables given by an element $R \in O(d)$ we may assume without loss of generality that $H(x) = x_1x_2\ldots x_{\ell}$. Hence, 
$$\widetilde{P}(x) =  x_1x_2\ldots x_{\ell}\,P_0(|x|) \ \ \ \ (x \in \R^d).$$ 
Let us first argue that we have property (P3) for $\widetilde{P}$. In fact, if $f \in L^1(\R^d)$ is a continuous eigenfunction of the Fourier transform such that 
$$\widetilde{P}(x)f(x) = x_1x_2\ldots x_{\ell}\,P_0(|x|)f(x) = 0  \ \ \ {\rm for } \ \ \ |x| > r,$$
the continuity of $f$ implies that 
$$P_0(|x|)f(x) = 0 \ \ \ {\rm for } \ \ \ |x| > r.$$
Property (P6) holds also for $x \mapsto P_0(|x|)$ ($x \in \R^d$), and we have seen that this implies (P3) for $x \mapsto P_0(|x|)$ ($x \in \R^d$). Hence $f = {\bf 0}$, establishing (P3) for $\widetilde{P}$.

\smallskip

Now let $s' = s(-1)^{(\frak{r(\ell)} + \ell)/2}$ and assume that $\mathcal{A}^{*}_{s'}\big(\widetilde{P};d\big)$ is non-empty. Let $f \in \mathcal{A}^{*}_{s'}\big(\widetilde{P};d\big)$. We start by considering an important reduction.

\subsubsection{Symmetrization with respect to $x_1, x_2, \ldots, x_{\ell}$} Throughout the rest of this proof let us write $x = (x_1, \ldots, x_{\ell}, \x)$ with $\x \in \R^{d-\ell}$. Let
$$w(x_1, x_2, \ldots, x_{\ell}, \x) = f(x_1,x_2, \ldots, x_{\ell}, \x) - f(-x_1, x_2,\ldots, x_{\ell}, \x).$$
Note that $\int_{\R^d} \widetilde{P}w = 2 \int_{\R^d} \widetilde{P}f \leq 0$. Observe also that $w \neq {\bf 0}$, otherwise $\widetilde{P}f$ would be eventually zero and from condition (P3) we would have $f ={\bf 0}$, a contradiction. It is clear that $r(\widetilde{P}w) \leq r(\widetilde{P}f)$, and hence $w \in \mathcal{A}^{*}_{s'}\big(\widetilde{P};d\big)$. Moreover, $w$ is odd with respect to the variable $x_1$. We apply the same symmetrization procedure $\ell - 1$ times, to the variables $x_2, \ldots, x_{\ell}$. One then arrives at a function in $\mathcal{A}^{*}_{s'}\big(\widetilde{P};d\big)$ that is odd with respect to each of the variables $x_1, \ldots, x_{\ell}$ independently.

\smallskip

\noindent {\sc Remark:} As far as radial symmetrization goes, at this point one could proceed as in \eqref{20200415_23:02} and integrate over $SO(d-\ell)$ to symmetrize $f$ with respect to the variable $\x$, but this is not particularly necessary for our argument below.

\subsubsection{Main argument}  \label{Main_Arg}Let us now assume that $f \in \mathcal{A}^{*}_{s'}\big(\widetilde{P};d\big)$ has the symmetries above. Define $g:\R^d \to \R$ by 
\begin{equation}\label{20200407_15:41}
 g(x) = \left\{
 \begin{array}{ll}
      \dfrac{f(x)}{x_1x_2\ldots x_{\ell}}, & {\rm if} \ x_1x_2\ldots x_{\ell} \neq 0; \\
      0, & {\rm if} \ x_1x_2\ldots x_{\ell}=0.
 \end{array}
 \right.  
 \end{equation}
Then $f(x) = x_1x_2\ldots x_{\ell}\, g(x)$ for all $x \in \R^d$, and $g$ is even with respect to each of the variables $x_1, x_2, \ldots, x_{\ell}$ independently. Observe that $P_0(|\cdot|)g$ is eventually non-negative and that 
\begin{equation}\label{20200514_16:16}
r(P_0(|\cdot|)g) = r\big(\widetilde{P}f\big).
\end{equation}

\smallskip

For each $1 \leq k \leq \ell$ let $y_k \in \R^3$ and let $\y \in \R^{d-\ell}$. We now work with the variable $y = (y_1, \ldots, y_{\ell}, \y) \in \R^{d + 2\ell}$. Define the function $\gs:\R^{d+2\ell} \to \R$ by
\begin{equation}\label{20200514_16:13}
\gs(y) = \gs(y_1, \ldots, y_{\ell}, \y) := g\big(|y_1|,\ldots, |y_{\ell}|, \y\big).
\end{equation}
Note that $P\gs$ is eventually non-negative with 
\begin{equation}\label{20200514_16:15}
r(P\gs) = r(P_0(|\cdot|)g).
\end{equation} 
We first observe that $\gs \in L^1(\R^{d + 2\ell})$. In fact, with changes to polar coordinates in each of the first $\ell$ variables on $\R^3$, we get
\begin{align}\label{20200407_17:30}
\begin{split}
\int_{\R^{d + 2\ell}} \big|\gs(y)\big|\,\d y & = \omega_2^{\ell} \int_{\R^{d-\ell}} \int_{(\R^+)^{\ell}} x_1^2 \ldots x_{\ell}^2\,\big|g(x_1, \ldots , x_{\ell}, \y)\big| \, \d x_1\ldots \d x_{\ell}\, \d \y\\
& = \omega_2^{\ell} \int_{\R^{d-\ell}} \int_{(\R^+)^{\ell}} x_1 \ldots x_{\ell}\,\big|f(x_1, \ldots , x_{\ell}, \y) \big|\, \d x_1\ldots \d x_{\ell}\, \d \y\\
& < \infty.
\end{split}
\end{align}
The last integral is finite since $f$ is continuous, $\widetilde{P}f \in L^1(\R^d)$ and we have property (P6) for $P$ (or equivalently, for $P_0$). Similarly, $P\gs \in L^1(\R^{d + 2\ell})$ since
\begin{align}\label{20200415_23:40}
\begin{split}
& \int_{\R^{d + 2\ell}} |P(y)|\,\big|\gs(y)\big|\,\d y \\
&  \ \ \ \ \ = \omega_2^{\ell} \int_{\R^{d-\ell}} \int_{(\R^+)^{\ell}} x_1 \ldots x_{\ell}\,\left|P_0\!\left(\big(x_1^2 +\ldots +x_{\ell}^2 + |\y|^2\big)^{1/2}\right)\, f(x_1, \ldots , x_{\ell}, \y) \right| \d x_1\ldots \d x_{\ell}\, \d \y \\
&   \ \ \ \ \ = \frac{\omega_2^{\ell}}{2^\ell} \int_{\R^d} \big|\widetilde{P}f\big| < \infty.
\end{split}
\end{align}
By recalculating \eqref{20200415_23:40} without the absolute value, and using that $\int_{\R^d} \widetilde{P}f \leq 0$, we find also that $\int_{\R^{d + 2\ell}} P\gs \leq 0$. 

\smallskip

Observe that, for each $1\leq k \leq \ell$, the functions 
\begin{align*}
x_k &\mapsto x_k \,g(x_1, \ldots, x_k,\ldots,x_{\ell}, \x)\\
x_k & \mapsto x_k^2 \,g(x_1, \ldots, x_k,\ldots,x_{\ell}, \x)\\
x_k &\mapsto x_k^2 \,g(x_1, \ldots, x_k,\ldots,x_{\ell}, \x)^2
\end{align*}
are absolutely integrable for a.e. $(x_1, \ldots, x_{k-1}, x_{k+1}, \ldots x_{\ell}, \x) \in \R^{d-1}$ (the second one follows from \eqref{20200407_17:30} and the latter follows from the fact that $f \in L^2(\R^d)$). In the computation below let us denote $x_k^* = (x_k,0,0) \in \R^3$ and $y_k = (y_{k1}, y_{k2}, y_{k3}) \in \R^3$ for $1 \leq k \leq \ell$. By a repeated use of Fubini's theorem and Bochner's relation (Lemma \ref{HB_form}, with $d=\ell=1$ in that statement) we find
\begin{align*}
& s'i^{\frak{r}(\ell)} x_1x_2\ldots x_{\ell}\,\gs (x_1^*, \ldots, x_{\ell}^*, \x) = s'i^{\frak{r}(\ell)} x_1x_2\ldots x_{\ell} \,g(x) = s'i^{\frak{r}(\ell)}  f(x)  = \f(x)\\
& =  \int_{\R^{d-1}}\left(\int_{\R}z_1\, g(z_1,z_2,\ldots, z_{\ell},\widetilde{z})\,e^{-2\pi i x_1z_1}\d z_1\right)z_2\ldots z_{\ell}\, e^{-2\pi i (x_2z_2 + \ldots + x_{\ell}z_{\ell}+\x \cdot \widetilde{z})}\,\d z_2 \ldots \d z_{\ell}\, \d\widetilde{z}\\\
&  =  (-i)\,x_1 \int_{\R^{d-1}}\left(\int_{\R^3} g(|y_1|,z_2,\ldots, z_{\ell},\widetilde{z})\,e^{-2\pi i x_1y_{11}}\d y_1\right)\\
& \ \ \ \ \ \ \ \ \  \ \ \ \ \ \ \ \ \ \ \ \ \ \ \ \ \ \ \ \ \ \ \ \ \ \ \ \ \ \ \ \ \ \ \ \ \ \ \ z_2\ldots z_{\ell}\, e^{-2\pi i (x_2z_2 + \ldots + x_{\ell}z_{\ell}+\x \cdot \widetilde{z})}\,\d z_2 \ldots \d z_{\ell}\, \d\widetilde{z}\\
&  =  (-i)\,x_1 \int_{\R^{d+1}}\left(\int_{\R} z_2 \,g(|y_1|,z_2,\ldots, z_{\ell},\widetilde{z})\,e^{-2\pi i x_2z_{2}}\,\d z_2\right)\\
& \ \ \ \ \ \ \ \ \  \ \ \ \ \ \ \ \ \ \ \ \ \ \ \ \ \ \ \ \ \ \ \ \ \ \ \ \ \ \ \ \ \ \ \ \ \ \ \ z_3\ldots z_{\ell}\, e^{-2\pi i (x_1y_{11} + x_3z_3 + \ldots + x_{\ell}z_{\ell}+\x \cdot \widetilde{z})}\,\d y_1\,\d z_3 \ldots \d z_{\ell}\, \d\widetilde{z}\\
& =  (-i)^2\,x_1x_2 \int_{\R^{d+1}}\left(\int_{\R^3} g(|y_1|,|y_2|,z_3,\ldots, z_{\ell},\widetilde{z})\,e^{-2\pi i x_2y_{21}}\,\d y_2\right)\\
&  \ \ \ \ \ \ \ \ \  \ \ \ \ \ \ \ \ \ \ \ \ \ \ \ \ \ \ \ \ \ \ \ \ \ \ \ \ \ \ \ \ \ \ \ \ \ \ \  z_3\ldots z_{\ell}\, e^{-2\pi i (x_1y_{11} + x_3z_3 + \ldots + x_{\ell}z_{\ell}+\x \cdot \widetilde{z})}\,\d y_1\,\d z_3 \ldots \d z_{\ell}\, \d\widetilde{z}\\
& = \ldots \\
& = (-i)^{\ell}\,x_1\ldots x_{\ell} \int_{\R^{d+2\ell}} g(|y_1|,\ldots, |y_{\ell}|,\widetilde{z}) \, e^{-2\pi i (x_1y_{11} + x_2y_{21} + \ldots +x_{\ell}y_{\ell 1} +\x \cdot \widetilde{z})}\, \d y_1\ldots \d y_{\ell}\, \d\widetilde{z}\\
                & = (-i)^{\ell}\,x_1\ldots x_{\ell} \, \widehat{\gs} (x_1^*, \ldots, x_{\ell}^*, \x).               
\end{align*}
Since $y \mapsto \gs(y)$ is radial on each of the first $\ell$ variables $y_k \in \R^3$, the same is valid for $\widehat{\gs}$ and therefore, if $|y_1|\ldots |y_{\ell}| \neq 0$, we find that 
\begin{equation*}
\widehat{\gs} (y_1, \ldots, y_{\ell}, \y) = s\,\gs  (y_1, \ldots, y_{\ell}, \y).
\end{equation*}
In particular, by the Riemann-Lebesgue lemma, $\gs$ is equal a.e. to a continuous function, that we now call $\overline{\gs}$. All the integrability properties defining the class $\mathcal{A}^{*}_{s}(P;d + 2\ell)$ automatically transfer from $\gs$ to $\overline{\gs}$. We must pay a bit of attention when it comes to \eqref{20200514_16:15}. Note that by definitions \eqref{20200407_15:41} and \eqref{20200514_16:13}, $\gs$ is already continuous on the set $Y = \{y = (y_1, \ldots, y_{\ell}, \y) \in \R^{d + 2\ell} \ : \ |y_1|\ldots|y_{\ell}| \neq 0\}$. So, $\overline{\gs}$ is potentially redefining the values of $\gs$ at the set $Y^c$. We claim that we continue to have
\begin{equation}\label{20200520_21:15}
r\big(P\overline{\gs}\big) = r(P\gs). 
\end{equation}
In fact, let $r = r(P\gs)$. Taking $y \in \R^{d + 2\ell}$ with $|y| >r$, we want to show that $P(y) \overline{\gs}(y) \geq 0$. If $\ |y_1|\ldots|y_{\ell}| \neq 0$ then $P(y) \overline{\gs}(y) = P(y)\gs(y) \geq 0$. If $\ |y_1|\ldots|y_{\ell}| = 0$, we have two options. If $P(y) = 0$ we are done. If not, assume without loss of generality that $P(y) > 0$. In this case, we can take a sequence of points $\{y^{(j)}\}_{j \in \N} \subset Y$ such that $\big|y^{(j)}\big| = |y| >r$ and $y^{(j)} \to y$ as $j \to \infty$. Since $P$ is radial, then $P\big(y^{(j)} \big) \overline{\gs}\big(y^{(j)} \big) = P(y)\overline{\gs}\big(y^{(j)} \big) =  P(y)\gs\big(y^{(j)} \big) = P\big(y^{(j)}\big)\gs\big(y^{(j)} \big) \geq 0$, and we conclude that $\overline{\gs}\big(y^{(j)} \big) \geq 0$ and by continuity $ \overline{\gs}(y) \geq 0$. This shows that $r\big(P\overline{\gs}\big) \leq r(P\gs)$. The reverse inequality is simpler, proceeding along the same lines.

\smallskip

The conclusion is that $\overline{\gs} \in \mathcal{A}^{*}_{s}(P;d + 2\ell)$ and from \eqref{20200514_16:16}, \eqref{20200514_16:15} and \eqref{20200520_21:15} we have
\begin{equation*}
\mathbb{A}^*_{s(-1)^{(\frak{r(\ell)} + \ell)/2}}\big(\widetilde{P};d\big) \geq \mathbb{A}^*_{s}(P;d+2\ell). 
\end{equation*}
This inequality, together with \eqref{20200513_16:45}, leads to the identity \eqref{20200514_10:50}. This concludes the proof.

\smallskip

\noindent{\sc Remark:} It is interesting to notice that if we start with $f \in \mathcal{A}^{*}_{s'}\big(\widetilde{P};d\big)$ as in \S \ref{Main_Arg}, run the procedure of \S \ref{Main_Arg} to arrive at the function $\overline{\gs} \in \mathcal{A}^{*}_{s}(P;d + 2\ell)$, and then run the radialization and dropping procedure of \S \ref{Dim_drop} with this $\overline{\gs}$, we end up with a new function $f_1 = \big(\overline{\gs}\big)^{\flat} \in \mathcal{A}^{*}_{s'}\big(\widetilde{P};d\big)$ that does a job at least as good as the original $f$ and has the form $f_1(x) = x_1\ldots x_{\ell} f_0(x)$, with $f_0$ radial. Such a reduction is not obvious from the start. Since we have explicit radial extremizers for \eqref{20200327_14:00} and \eqref{20200327_14:01} in \cite{CG, CKMRV, Vi}, one can construct explicit extremizers for all the other 14 situations in Corollary \ref{Sharp_const} by formula \eqref{20200409_15:34}.

\section{Power weights: proof of Corollary \ref{Thm_Laplacian}}\label{Sec_Lap}
Although we call this a corollary, it requires a brief proof, that will essentially be a collage of passages from our previous results. For instance, using Theorem \ref{Thm_Harmonic_part} with $H = {\bf 1}$ and $Q = |x|^{\gamma}$ we have: that $\mathcal{A}^{*}_{s}(|x|^{\gamma};d)$ is non-empty for all $\gamma > -d$; that the upper bound in \eqref{20200518_09:54} holds for all $\gamma > -d$ and $s = \pm 1$; and that the identity \eqref{20200518_09:59} holds. From Theorem \ref{Thm_general_sign_uncertainty} (i) we have the existence of extremizers for $\mathbb{A}_{s}(|x|^{\gamma};d)$ when $\gamma \geq 0$, and the fact that they can be taken to be radial follows as in \eqref{20200415_23:02} (from Proposition \ref{Prop_8_integral_zero} we can even assume that $\int_{\R^d}f |x|^{\gamma} = 0$). This leaves us with the task of proving the lower bound in \eqref{20200518_09:54}, which is the actual sign uncertainty principle. We consider below the different regimes.

\subsection{The case $\gamma \geq 0$}\label{Sec6_1} The case $\gamma = 0$ is known (see Theorem A or the remark after Theorem \ref{Thm_general_sign_uncertainty}). Let us assume that $\gamma >0$. Recall that the volume of the unit ball $B = B_1 \subset \R^d$ is given by $|B| = \pi^{d/2} / \Gamma\big(\frac{d}{2} + 1\big)$. Using \eqref{20200416_22:56} and \eqref{20200402_00:50} we find that 
\begin{equation}\label{20200518_16:46}
\mathbb{A}_{s}(|x|^{\gamma};d) \geq \left( \frac{(d + \gamma q')}{d} \frac{1}{|B|} \frac{1}{ \left(2\left( 1 + \frac{\gamma}{d}\right) \left[ \left( 1 + \frac{d}{\gamma}\right) |B|\right]^{\frac{\gamma}{d}}\right)^{q'}}\right)^{\frac{1}{(d + \gamma q')}} = \frac{1}{|B|^{\frac{1}{d}}} \, F(q')\,,
\end{equation}
where 
\begin{equation*}
F(q') = \left(\frac{d + \gamma q'}{d} \right)^{\frac{1}{(d + \gamma q')}} \left(\frac{d}{2(d + \gamma)}\right)^{\frac{q'}{(d + \gamma q')}} \left(\frac{\gamma}{ d + \gamma}\right)^{\frac{\gamma q'}{d(d + \gamma q')}}.
\end{equation*}
Let us briefly indicate why the choice $q' = 1$ indeed maximizes $F(q')$ for all $d$ and $\gamma$ in this case. Write $\gamma = \lambda d$, with $\lambda >0$. Then $\log F(x) = \frac{1}{d} \, H(x)$, with
$$H(x) = \left(\frac{1}{1 + \lambda x}\right)\left( \log(1 + \lambda x) - x \log (2(1 + \lambda)) + \lambda x \log\left(\frac{\lambda}{1 + \lambda}\right)\right).$$
Routine calculus arguments lead to $H'(x) < 0$ for all $x \geq 1$. We then plug $q' = 1$ in \eqref{20200518_16:46} to get 
\begin{equation}\label{20200519_01:00}
\mathbb{A}_{s}(|x|^{\gamma};d)  \geq \left( \frac{\Gamma\big(\frac{d}{2} +1\big)}{\pi^{d/2}}\right)^{\frac{1}{d}} \left(\frac{1}{2}\right)^{\frac{1}{d+ \gamma}}\left(\frac{\gamma}{ d + \gamma}\right)^{\frac{\gamma}{d(d + \gamma)}}.
\end{equation}
This is an explicit lower bound in which the parameters $d$ and $\gamma > 0$ may vary independently. If one is interested in bounds that are uniform on the parameter $\gamma > 0$, we call $x = \gamma/(d + \gamma)$ and note that the function 
$$\gamma \mapsto \left(\frac{1}{2}\right)^{\frac{1}{d+ \gamma}}\left(\frac{\gamma}{ d + \gamma}\right)^{\frac{\gamma}{d(d + \gamma)}}$$
is minimized when $x = 1/2e$, with value $2^{-\frac{1}{d}}\, e^{-\frac{1}{2ed}}$. Then
\begin{equation}\label{20200518_22:34}
\mathbb{A}_{s}(|x|^{\gamma};d)  \geq \left( \frac{\Gamma\big(\frac{d}{2} +1\big)}{2 \,\pi^{d/2} \, e^{\frac{1}{2e}}}\right)^{\frac{1}{d}}.\end{equation}
Using the inequality $\Gamma(x+1) > (\frac{x}{e})^{x} \sqrt{2 \pi x}$ for all $x >0$ we see that the right-hand side of \eqref{20200518_22:34} is greater than $\sqrt{\frac{d}{2 \pi e}}$ for all $d \geq 2$, and we can actually take $c =1$ in \eqref{20200518_09:54}. If $d=1$ then the right-hand side of \eqref{20200518_22:34} is equal to $c \sqrt{\frac{1}{2 \pi e}}$ for $ c = 0.8595\ldots$.

\subsection{The case $-\frac{d}{2} + \varepsilon(d) \leq \gamma < 0$} \label{Sec6.2} Here we use inequality \eqref{20200513_16:45} in Theorem \ref{Dim_Shift} (note that the dimension $d$ here shall correspond to the dimension $d + 2\ell$ in \eqref{20200513_16:45}). If $d = 3$, we let $H(x) = x_1$ (of degree $\ell = 1$). If $d \geq 4$, we let $H$ be a homogeneous and harmonic polynomial of two variables and degree $\ell = -\lfloor \gamma \rfloor$ (e.g. we can take $H(x_1, x_2) = \Re ((x_1 + ix_2)^{\ell})$). Having defined $H$, we let $Q(x) = |x|^{\ell} \cdot \frac{\sgn(H(x))}{H(x)}$ for $H(x) \neq 0$, and zero otherwise. Then \eqref{20200513_16:45} yields
\begin{equation}\label{20200519_00:33}
\mathbb{A}_{s}(|x|^{\gamma};d)  \geq \mathbb{A}_{s(-1)^{(\frak{r(\ell)} + \ell)/2}}\big(\sgn(H(x))|x|^{\gamma +\ell};d-2\ell\big).\
\end{equation}
Note that the final dimension $d- 2\ell$ is at least equal to the number of variables we need to construct our harmonic polynomial $H$ (this is how we define the function $\varepsilon:\N \to \R$), and on the right-hand side of \eqref{20200519_00:33} we now have a homogeneous function $\sgn(H(x))|x|^{\gamma +\ell}$ of degree $0 \leq \gamma + \ell <1$. Note that $|\sgn(H(x))|\,|x|^{\gamma +\ell} = |x|^{\gamma +\ell}$ for a.e. $x \in \R^{d-2\ell}$, and hence the volume of their sub-level sets are the same. We may then proceed as in \S \ref{Sec6_1}, using \eqref{20200416_22:56} and \eqref{20200402_00:50} to arrive at the exact same bounds as in \eqref{20200519_01:00} and \eqref{20200518_22:34}, with $\gamma +\ell$ in the place of $\gamma$ and $d-2\ell$ in the place of $d$. This leads to \eqref{20200518_09:54} in this case.

 \subsection{The case $s =1$ and $-d < \gamma \leq -\frac{d}{2} - \varepsilon(d)$} Recall that for any Schwartz function $f$ we have the identity (see \cite[Chapter V, \S1, Lemma 2]{S})
\begin{equation}\label{20200518_23:36}
\Gamma\left(\frac{d + \gamma}{2}\right)\,\pi^{\frac{-d - \gamma}{2}} \int_{\R^d} |x|^{-d-\gamma} \widehat{f}(x)\, \d x  = \Gamma\left(-\frac{\gamma}{2}\right)\,\pi^{\frac{\gamma}{2}} \int_{\R^d} |x|^{\gamma} f(x)\, \d x.
\end{equation}
Standard approximation arguments show that \eqref{20200518_23:36} remains valid for $f \in L^1(\R^d)$ such that $\widehat{f} \in L^1(\R^d)$. In particular, this implies that 
\begin{equation}\label{20200520_12:00}
\mathbb{A}_{+1}(|x|^{\gamma};d) = \mathbb{A}_{+1}(|x|^{-d-\gamma};d).
\end{equation}
Using this symmetry we fall in the case $-\frac{d}{2} + \varepsilon(d) \leq -d -\gamma < 0$ treated in \S \ref{Sec6.2}. This leads again to \eqref{20200518_09:54} and concludes the proof. 

\smallskip

\noindent{\sc Remark}: The symmetry \eqref{20200520_12:00} is not valid in the case $s = -1$ as we have \eqref{20200518_09:54} and \eqref{20200518_09:59}. In particular, in light of \eqref{20200518_23:36}, this implies that one cannot reduce the search in $\mathbb{A}_{-1}(|x|^{\gamma};d)$, when $\gamma \leq -\frac{d}{2} - \varepsilon(d)$ to functions satisfying $\int_{\R^d}f |x|^{\gamma} = 0$. Proposition \ref{Prop_8_integral_zero} already pointed in this direction.
 
\smallskip

\noindent{\sc Remark}:  Establishing the sign uncertainty for $P(x) = |x|^{\gamma}$, with $-d < \gamma <0$, in a more direct way seems to be subtle. For instance, one could try to prove \eqref{20200331_11:16}, or even a suitable weaker variation of it that would still make the H\"{o}lder's inequality argument in \eqref{20200511_15:24} work. For instance, it would be natural and sufficient to consider an inequality of the type, for $f \in L^1(\R^d)$ with $\f= \pm f$,
\begin{equation}\label{20200512_13:59}
\big\| f |x|^{\alpha}\big\|_q \leq C \,\big\|f |x|^{\gamma}\big\|_1\,,
\end{equation}
for some $\alpha$ and $q$ verifying the conditions: (i) if $q=1$ then $-d < \alpha < \gamma$; or (ii) if $1 < q < \infty$ then $-\frac{d}{q} < \alpha < \gamma + \frac{d}{q'}$. However, the inequality \eqref{20200512_13:59} is simply not true. The following counterexample in dimension $d=1$ was communicated to us by F. Nazarov. Choose a small $\delta>0$ and consider a real-valued, radially non-increasing Schwartz function $g$ supported on $[-\delta,\delta]$ with $g \equiv 1 $ on $[-\delta/2,\delta/2]$.  Let $h_{t}(x) = g(x)\cos(2\pi tx)$ for large $t$ and put $f_t = h_t + \widehat{h_t}$. Then $f_t = \widehat{f_t}$. Noting that $\limsup_{t \to \infty}\big\|\widehat{h_t} |x|^{\gamma}\big\|_1 = 0$, by the triangle inequality,
\begin{equation}\label{20200512_15:09}
\limsup_{t \to \infty}\big\|f_t |x|^{\gamma}\big\|_1 \leq \limsup_{t \to \infty}\big\|h_t |x|^{\gamma}\big\|_1 +  \limsup_{t \to \infty}\big\|\widehat{h_t} |x|^{\gamma}\big\|_1 \leq \big\|g |x|^{\gamma}\big\|_1 \lesssim \delta^{\gamma +1}.
\end{equation}
Similarly, noting that $\limsup_{t \to \infty}\big\|\widehat{h_t} |x|^{\alpha}\big\|_{L^q([-\delta, \delta])} = 0$,
\begin{align}\label{20200512_15:10}
\begin{split}
\liminf_{t \to \infty}\big\|f_t |x|^{\alpha}\big\|_{L^q([-\delta, \delta])}  & \geq 
\liminf_{t \to \infty}\big\|h_t |x|^{\alpha}\big\|_{L^q([-\delta, \delta])} - \limsup_{t \to \infty}\big\|\widehat{h_t} |x|^{\alpha}\big\|_{L^q([-\delta, \delta])}\\
&\geq \liminf_{t \to \infty}\big\|h_t |x|^{\alpha}\big\|_{L^q([-\delta, \delta])}\\
& \gtrsim \delta^{\alpha + \frac{1}{q}}.
\end{split}
\end{align}
If \eqref{20200512_13:59} were true, \eqref{20200512_15:09} and \eqref{20200512_15:10} would imply that $\alpha \geq \gamma + \frac{1}{q'}$, a contradiction. It should be noted that the functions in the counterexample above are eigenfunctions but do not necessarily belong to the class $\mathcal{A}^{*}_{s}(|x|^{\gamma};d)$. Hence, one may still try to find suitable admissibility inequalities like \eqref{20200331_11:16} or \eqref{20200512_13:59} imposing this additional constraint on $f$ (and even assuming that $r(f)$ is small). Several other types of weighted norm inequalities related to uncertainty are considered in \cite{BDell}.

\section*{Acknowledgements}  We would like to thank William Beckner, John Benedetto, Felipe Gon\c{c}alves, Fedor Nazarov and Diogo Oliveira e Silva for lending us their invaluable expertise on the general topic of Fourier uncertainty through insightful remarks. E.C. acknowledges partial support from FAPERJ - Brazil. E.Q-H. acknowledges support from CNPq - Brazil and from the STEP program of ICTP - Italy. We are also thankful to the anonymous referees for the helpful comments.

\end{document}